\newtheoremstyle{plain}
  {6pt}   
  {6pt}   
  {\itshape}  
  {0pt}       
  {\bfseries} 
  {.}         
  {5pt plus 1pt minus 1pt} 
  {}          
\newtheoremstyle{definition}
  {6pt}   
  {6pt}   
  {\normalfont}  
  {0pt}       
  {\bfseries} 
  {.}         
  {5pt plus 1pt minus 1pt} 
  {}          
\theoremstyle{plain}
\newtheorem*{thm*}{Theorem}
\newtheorem{thm}{Theorem}[section]
\newtheorem{prop}[thm]{Proposition}
\newtheorem{cor}[thm]{Corollary}
\newtheorem{lem}[thm]{Lemma}
\newtheorem{theorem}{Theorem}
\newtheorem{proposition}[theorem]{Proposition}
\theoremstyle{definition}
\newtheorem{defn}[thm]{Definition}
\newtheorem{ex}[thm]{Example}
\newtheorem{rmk}[thm]{Remark}
\numberwithin{equation}{thm}
\newcommand{\emphbf}[1]{\emph{\textbf{#1}}}
\DeclareMathAlphabet{\mathpzc}{OT1}{pzc}{m}{it}
\newcommand{\rad}[1]{\radoperator(#1)}
\DeclareMathOperator{\radoperator}{rad}
\DeclareMathOperator{\Kopf}{top}
\DeclareMathOperator{\id}{id}
\DeclareMathOperator{\Hom}{Hom}
\DeclareMathOperator{\Mod}{Mod}
\DeclareMathOperator{\modu}{mod}
\DeclareMathOperator{\rep}{rep}
\DeclareMathOperator{\coker}{Coker}
\DeclareMathOperator{\Rep}{Rep}
\DeclareMathOperator{\End}{End}
\DeclareMathOperator{\image}{Im}
\DeclareMathOperator{\Coker}{Coker}
\DeclareMathOperator{\add}{add}
\DeclareMathOperator{\injdim}{injdim}
\DeclareMathOperator{\projdim}{projdim}
\DeclareMathOperator{\Alg}{Alg}
\DeclareMathOperator{\Algpro}{Alg_{2pro}}
\DeclareMathOperator{\AlgMod}{AlgMod}
\DeclareMathOperator{\AlgproMod}{Alg_{2pro}Mod}
\DeclareMathOperator{\Algfree}{Alg_{2free}}
\DeclareMathOperator{\AlgfreeMod}{Alg_{2free}Mod}
\DeclareMathOperator{\rank}{rank}
\DeclareMathOperator{\arrowin}{in}
\DeclareMathOperator{\arrowout}{out}
\DeclareMathOperator{\sgn}{sgn}
\DeclareMathOperator{\mult}{mult}
\DeclareMathOperator{\sub}{sub}
\DeclareMathOperator{\fac}{fac}
\def\mymathhyphen{{\hbox{-}}}
\begin{document}

\title{Pro-species of algebras I: Basic properties}
\author{Julian K\"ulshammer}
\date{\today}

\address{
Institute of Algebra and Number Theory,
University of Stuttgart \\ Pfaffenwaldring 57 \\ 70569 Stuttgart,
Germany} \email{kuelsha@mathematik.uni-stuttgart.de}

\thanks{The author would like to thank Chrysostomos Psaroudakis, Sondre Kvamme, and the anonymous referee for helpful comments on previous versions of the paper.}

\keywords{species, valued quiver, preprojective algebra, reflection functors, separated quiver}

\begin{abstract}
In this paper, we generalise part of the theory of hereditary algebras to the context of pro-species of algebras. Here, a pro-species is a  generalisation of Gabriel's concept of species gluing algebras via projective bimodules along a quiver to obtain a new algebra. This provides a categorical perspective on a recent paper by Gei\ss, Leclerc, and Schr\"oer \cite{GLS16}. In particular, we construct a corresponding preprojective algebra, and establish a theory of a separated pro-species yielding a stable equivalence between certain functorially finite subcategories.
\end{abstract}

\maketitle

\section{Introduction}

The representation theory of finite dimensional hereditary algebras is among the best understood theories to date. Over algebraically closed fields, hereditary algebras are given by path algebras of finite acyclic quivers. Over more general fields, species, introduced by Gabriel \cite{Gab73}, form another class of hereditary algebras, which in the case that the ground field is perfect exhaust all finite dimensional hereditary algebras. Species can be regarded as skew fields glued via bimodules along a quiver to obtain an algebra. Their representation theory was studied intensively by Dlab and Ringel in a series of papers \cite{DR74a, DR74b, DR75, DR76, DR80}. 

Recently, Gei\ss, Leclerc, and Schr\"oer \cite{GLS16} defined algebras by quivers and relations, which turn out to have a species-like behaviour -- although they can as well be defined over algebraically closed fields. These can be viewed as various $k[x_i]/(x_i^{c_i})$ glued via bimodules which are free from both sides along a quiver. In \cite{GLS16} part of the representation theory of species has been generalised to these algebras resulting in an analogue of Gabriel's theorem. Their theory has been partially generalised to Frobenius algebras glued via  bimodules which are free from both sides by Fang Li and Chang Ye \cite{LY15}. They do not use the language of species but instead work with upper triangular matrix rings. In this paper, we generalise part of the theory of species to what we call pro-species of algebras, that is we generalise species by gluing arbitrary algebras (not necessarily skewfields) via bimodules which are projective from both sides along a quiver. 

Our goal is to give a conceptual approach to the papers \cite{GLS16} and \cite{LY15}, and provide some additional results for this theory. The philosophy is that the representation theory of a pro-species $\Lambda$ consisting of algebras $\Lambda_{\mathtt{i}}$ glued via bimodules $\Lambda_\alpha$ which are projective from both sides along a quiver $Q$ is in some parts governed by the individual representation theories for the $\Lambda_{\mathtt{i}}$. As a first example we restate results of Wang \cite{Wan16} and Luo and Zhang \cite{LZ13} on how to construct Iwanaga-Gorenstein algebras and describe their categories of Gorenstein projective modules as well as modules of finite projective dimension. 

A second part of the paper concerns the theory of reflection functors. In 1973, Bernstein, Gel'fand, and Ponomarev \cite{BGP73} introduced reflection functors for quivers in order to give a more conceptual proof of Gabriel's theorem characterising representation-finite path algebras of quivers. These functors were generalised to species by Dlab and Ringel in \cite{DR76}. In 1979, Gel'fand and Ponomarev \cite{GP79} introduced the preprojective algebra of the path algebra of an acyclic quiver by a certain doubling procedure of the original quiver. This algebra, regarded as a module over the original algebra, decomposes as the direct sum of all preprojective modules. Surprisingly recently, reflection functors have also been defined for preprojective algebras, independently by Baumann and Kramnitzer \cite{BK12}, Bolten \cite{Bol10}, and Buan, Iyama, Reiten, and Scott \cite{BIRS09}. The first two papers describe them in linear algebra terms similar to \cite{GP79} while \cite{BIRS09} gives a (in the non-Dynkin case) tilting module which provides this equivalence. See \cite{BKT14} for a comparison of the two approaches which was observed by Amiot. We extend this theory to the setting of pro-species.

\begin{theorem}[Section \ref{sec:reflectionfunctors}]\label{mainthm1}
Let $\Lambda$ be a pro-species of algebras. Then, there are reflection functors $(\Sigma_\mathtt{i}^+,\Sigma_\mathtt{i}^-)$ on the module category of the associated preprojective algebra $\Pi(\Lambda)$ of $\Lambda$. They can be described in terms of linear algebra as well as by $(\Hom_\Lambda(I_\mathtt{i},-), I_\mathtt{i}\otimes_\Lambda -)$ for a two-sided ideal $I_\mathtt{i}\subseteq \Pi(\Lambda)$.
\end{theorem}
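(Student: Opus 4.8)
\noindent The plan is to carry over to the setting of a pro-species the construction of reflection functors for preprojective algebras due to Baumann--Kamnitzer \cite{BK12}, Bolten \cite{Bol10}, and Buan--Iyama--Reiten--Scott \cite{BIRS09}, which in turn refines the constructions of Gel'fand--Ponomarev \cite{GP79}, Bernstein--Gel'fand--Ponomarev \cite{BGP73}, and, for species, Dlab--Ringel \cite{DR76}. Fix a vertex $\mathtt{i}$ of $Q$, abbreviate $\Pi=\Pi(\Lambda)$, and let $e_\mathtt{i}\in\Pi$ be the idempotent given by the unit of $\Lambda_\mathtt{i}$ (the subscript $\Lambda$ in the statement referring to this algebra $\Pi$). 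The first step is to record the representation-theoretic description of $\Mod\Pi$: by construction $\Pi$ is the pro-species of the doubled quiver $\overline{Q}=Q\sqcup Q^{\op}$ modulo one mesh relation per vertex, so a $\Pi$-module $M$ consists of $\Lambda_\mathtt{j}$-modules $M_\mathtt{j}$ together with a structure map $M_\gamma\colon\Lambda_\gamma\otimes_{\Lambda_{s\gamma}}M_{s\gamma}\to M_{t\gamma}$ for each arrow $\gamma$ of $\overline{Q}$ --- where for $\gamma=\alpha^{*}$ the bimodule $\Lambda_{\alpha^{*}}$ is the appropriate one-sided $\Lambda_\alpha$-dual --- subject to one relation among these maps at each vertex. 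Using the tensor--hom adjunction together with the identification $\Lambda_{\gamma^{*}}\otimes_{\Lambda_{t\gamma}}M_{t\gamma}\cong\Hom_{\Lambda_{t\gamma}}(\Lambda_\gamma,M_{t\gamma})$ --- both valid because every $\Lambda_\alpha$ is \emph{projective from both sides} --- the outgoing structure maps at $\mathtt{i}$ assemble into a single $\Lambda_\mathtt{i}$-linear map $b_\mathtt{i}\colon M_\mathtt{i}\to M_\mathtt{i}'$ and the incoming ones (with the signs dictated by the mesh relation built in) into $c_\mathtt{i}\colon M_\mathtt{i}'\to M_\mathtt{i}$, where $M_\mathtt{i}'=\bigoplus_{\gamma\colon\mathtt{i}\to\mathtt{j}\text{ in }\overline{Q}}\Hom_{\Lambda_\mathtt{j}}(\Lambda_\gamma,M_\mathtt{j})$ depends on $M$ only through its restriction away from $\mathtt{i}$, and the mesh relation at $\mathtt{i}$ translates into $c_\mathtt{i}\circ b_\mathtt{i}=0$.

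\noindent I would then define the linear-algebra reflection functors by leaving $M_\mathtt{j}$ and every structure map not incident with $\mathtt{i}$ unchanged and setting $(\Sigma_\mathtt{i}^{+}M)_\mathtt{i}=\ker c_\mathtt{i}$, $(\Sigma_\mathtt{i}^{-}M)_\mathtt{i}=\coker b_\mathtt{i}$, the new structure maps at $\mathtt{i}$ being induced by the inclusion $\ker c_\mathtt{i}\hookrightarrow M_\mathtt{i}'$, resp.\ the projection $M_\mathtt{i}'\twoheadrightarrow\coker b_\mathtt{i}$, together with the factorisations of $b_\mathtt{i}\circ c_\mathtt{i}\colon M_\mathtt{i}'\to M_\mathtt{i}'$ forced by $c_\mathtt{i}b_\mathtt{i}=0$. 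One then checks that $\Sigma_\mathtt{i}^{\pm}M$ again satisfies all mesh relations --- at $\mathtt{i}$ this is immediate, and at the neighbours of $\mathtt{i}$ it is a short diagram chase using the relation at $\mathtt{i}$ --- and that a morphism of $\Pi$-modules induces a compatible morphism between the reflected modules, so that $\Sigma_\mathtt{i}^{+},\Sigma_\mathtt{i}^{-}\colon\Mod\Pi\to\Mod\Pi$ are well-defined $k$-linear functors. This step is routine provided one keeps track of the signs carried by the mesh relations.

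\noindent For the functorial description, set $I_\mathtt{i}:=\Pi(1-e_\mathtt{i})\Pi$, a two-sided ideal; since $\overline{Q}$ has no loop at $\mathtt{i}$ one has $\Pi/I_\mathtt{i}\cong\Lambda_\mathtt{i}$ and $I_\mathtt{i}e_\mathtt{j}=\Pi e_\mathtt{j}$ for $\mathtt{j}\neq\mathtt{i}$, while $I_\mathtt{i}e_\mathtt{i}$ is the left submodule of $\Pi e_\mathtt{i}$ generated by the arrows out of $\mathtt{i}$. From the defining relations of $\Pi$ there is an exact sequence $\Pi e_\mathtt{i}\xrightarrow{\,d_2\,}P_1\xrightarrow{\,d_1\,}\Pi e_\mathtt{i}\to\Lambda_\mathtt{i}\to0$ with $P_1=\bigoplus_{\gamma\colon\mathtt{i}\to\mathtt{j}}\Pi e_\mathtt{j}\otimes_{\Lambda_\mathtt{j}}\Lambda_\gamma$ (projective because each $\Lambda_\gamma$ is $\Lambda_\mathtt{j}$-projective), $d_1$ given by multiplication with the arrows out of $\mathtt{i}$ and $d_2$ encoding the mesh relation at $\mathtt{i}$ through the coevaluation elements of the bimodules; hence $I_\mathtt{i}e_\mathtt{i}=\image d_1=\coker d_2$. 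Applying $\Hom_\Pi(-,M)$ to the presentation $\Pi e_\mathtt{i}\xrightarrow{\,d_2\,}P_1\to I_\mathtt{i}e_\mathtt{i}\to0$ and using the identifications of the first paragraph gives $\Hom_\Pi(P_1,M)\cong M_\mathtt{i}'$ and $d_2^{*}=c_\mathtt{i}$, so $\Hom_\Pi(I_\mathtt{i},M)$ has $\mathtt{i}$-component $\ker c_\mathtt{i}$ and $\mathtt{j}$-component $M_\mathtt{j}$; unwinding the $\Pi$-action (coming from the right $\Pi$-structure of $I_\mathtt{i}$) shows that its structure maps are exactly those of $\Sigma_\mathtt{i}^{+}M$, whence a natural isomorphism $\Hom_\Pi(I_\mathtt{i},-)\cong\Sigma_\mathtt{i}^{+}$. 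The dual computation with $I_\mathtt{i}\otimes_\Pi-$ (using the right-module analogues of the same presentation and of the duality isomorphisms) yields $I_\mathtt{i}\otimes_\Pi-\cong\Sigma_\mathtt{i}^{-}$; since $(I_\mathtt{i}\otimes_\Pi-)\dashv\Hom_\Pi(I_\mathtt{i},-)$, this also recovers an adjunction between the two reflection functors.

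\noindent The main difficulty is not conceptual but lies in the bimodule bookkeeping forced by allowing the vertex algebras $\Lambda_\mathtt{i}$ to be arbitrary: one must fix the one-sided duals $\Lambda_{\alpha^{*}}$ consistently and verify that the adjunction, evaluation and coevaluation morphisms used throughout --- to make sense of the mesh relations, to identify $\Hom_\Pi(P_1,M)$ and $P_1\otimes_\Pi M$ with $M_\mathtt{i}'$, and to read off the $\Pi$-action on $\Hom_\Pi(I_\mathtt{i},-)$ --- are genuine isomorphisms, which is precisely where the hypothesis that each $\Lambda_\alpha$ is projective from both sides enters. A secondary but unavoidable task is to keep the signs of the mesh relations consistent through $d_1$, $d_2$ and through the verification that $\Sigma_\mathtt{i}^{\pm}M$ satisfies the mesh relations at the neighbours of $\mathtt{i}$; matching not merely the graded pieces but also the structure maps of $\Hom_\Pi(I_\mathtt{i},M)$ with those of $\Sigma_\mathtt{i}^{+}M$ is arguably the most delicate point. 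Finally, as in the classical theory one should expect $\Sigma_\mathtt{i}^{+}$ and $\Sigma_\mathtt{i}^{-}$ to restrict to mutually quasi-inverse equivalences on appropriate subcategories and $I_\mathtt{i}$ to have good homological behaviour (e.g.\ to be a partial tilting module) only when the pro-species is of non-Dynkin type; the existence of the functors and the two descriptions above, however, hold without that restriction.
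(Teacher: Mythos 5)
Your proposal is correct and follows essentially the same route as the paper: the same ideal $I_\mathtt{i}=\Pi(\Lambda)(1-\varepsilon_\mathtt{i})\Pi(\Lambda)$, the same adjoint pair $(\Hom_{\Pi(\Lambda)}(I_\mathtt{i},-),\,I_\mathtt{i}\otimes_{\Pi(\Lambda)}-)$, the same linear-algebra construction via $\ker M_{\mathtt{i},\arrowin}$ and $\coker M_{\mathtt{i},\arrowout}$ after identifying $\modu\Pi(\Lambda)$ with representations of the doubled pro-species satisfying the mesh relations, and the same identification strategy: produce a projective presentation of the relevant piece of $I_\mathtt{i}$ coming from the mesh relation at $\mathtt{i}$ and apply $\Hom(-,M)$, respectively $-\otimes M$. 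The one place where your sketch is thinner than the paper is the sentence ``from the defining relations of $\Pi$ there is an exact sequence $\Pi e_\mathtt{i}\to P_1\to \Pi e_\mathtt{i}\to\Lambda_\mathtt{i}\to 0$'': exactness at $P_1$ (equivalently, that $\ker d_1$ is generated by the single mesh element, so that $I_\mathtt{i}e_\mathtt{i}=\coker d_2$) is precisely the nontrivial input, and it is what makes $\Hom_\Pi(I_\mathtt{i}e_\mathtt{i},M)=\ker c_\mathtt{i}$ rather than merely a submodule of it. The paper obtains this from the partial bimodule resolution of $\Pi(\Lambda)$ (Lemma \ref{bimoduleresolutionpreprojective}, proved via the acyclic assembly lemma following \cite{BBK02}), truncated and tensored to give the $\Lambda_\mathtt{i}$-$\Pi(\Lambda)$-bimodule presentation of $\varepsilon_\mathtt{i}I_\mathtt{i}$; one can also verify the one-sided statement directly in the tensor algebra $T(\overline{\Lambda})$ modulo the ideal generated by the Casimir element, but either way it needs an argument rather than an assertion. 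Two cosmetic differences: the paper treats the tensor description of $\Sigma_\mathtt{i}^-$ explicitly and refers to \cite{BKT14} for the Hom side, whereas you do the Hom side; and the construction requires $\Lambda$ to be dualisable (so that $\Lambda_{\alpha^*}$ is unambiguous), a hypothesis your sketch uses implicitly.
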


A third section of this article generalises the theory of the separated quiver of a radical square zero algebra and the resulting stable equivalence, as obtained by Auslander and Reiten \cite{AR73, AR75}. 

\begin{theorem}[Theorem \ref{stableequivalence}]\label{mainthm2}
Let $\Lambda$ be a pro-species such that all the $\Lambda_\mathtt{i}$ are selfinjective. Let $\Gamma$ be the quotient of the tensor algebra $T(\Lambda)$  by its degree greater or equal to two part. Let $\Lambda^s$ be the separated pro-species of $\Lambda$. Then there is a stable equivalence of subcategories 
\[\underline{\modu}_{l.p.}\Gamma\to \underline{\rep}_{l.p.}\Lambda^s\]
where the subscript $l.p.$ denotes the subcategory of modules which are projective when regarded as modules for $\Lambda_\mathtt{i}$.
\end{theorem}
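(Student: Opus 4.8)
The plan is to turn the statement into a relative-to-$\Lambda_0$ version of Auslander and Reiten's stable equivalence attached to the separated quiver of a radical-square-zero algebra, and then imitate their proof. First I would fix a dictionary: put $\Lambda_0 = \prod_{\mathtt i}\Lambda_\mathtt{i}$ and $R = \bigoplus_\alpha \Lambda_\alpha$, a $\Lambda_0$-bimodule that is projective on either side; then $T(\Lambda)$ is the $\Lambda_0$-tensor algebra of $R$, so $\Gamma = T(\Lambda)/T(\Lambda)_{\ge 2}$ is the split square-zero extension $\Lambda_0 \ltimes R$. Hence $\modu\Gamma$ is equivalent to the category of pairs $(M,\mu)$ with $M \in \modu\Lambda_0$ and $\mu\colon R\otimes_{\Lambda_0} M \to M$ satisfying $\mu\circ(\id_R\otimes\mu) = 0$; and, since the separated species has no path of length $\ge 2$, $T(\Lambda^s) \cong \bigl(\begin{smallmatrix}\Lambda_0 & 0\\ R & \Lambda_0\end{smallmatrix}\bigr)$, so $\rep\Lambda^s$ is equivalent to the category of triples $(A,B,\nu)$ with $A,B\in\modu\Lambda_0$ and $\nu\colon R\otimes_{\Lambda_0}A \to B$. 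Under this dictionary both occurrences of the subscript $l.p.$ select the objects that become projective upon restriction to $\Lambda_0$, and the theorem becomes the relative analogue of the radical-square-zero case, which serves as the blueprint.

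Next I would construct $F$ as the relative analogue of the Auslander–Reiten functor. Classically $F$ sends $M$ to $(M/RM,\, RM,\, \bar\mu)$, where $RM$ is the image of $\mu$; in the present generality the same idea has to be reorganised — reading $F(M)$ off from the first terms of a relative (to $\Lambda_0$) projective presentation $\Gamma\otimes_{\Lambda_0}Q_1 \to \Gamma\otimes_{\Lambda_0}Q_0 \to M \to 0$ — so that the output is genuinely locally projective. The delicate preliminary point is precisely that $F$ maps into $\rep_{l.p.}\Lambda^s$, and this is where selfinjectivity of the $\Lambda_\mathtt{i}$ is indispensable: it enters through the facts that $R\otimes_{\Lambda_0}P$ is $\Lambda_0$-projective whenever $P$ is (because $R$ is projective on the left) and that $\Hom_{\Lambda_0}(R,P)$ is $\Lambda_0$-projective whenever $P$ is — for the latter one uses that $P$ projective is also injective, together with $R$ projective on the right. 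One further checks that $F$ is additive and carries relatively projective $\Gamma$-modules, in particular all projectives, to projective $\Lambda^s$-modules, so that $F$ descends to a functor $\underline{\modu}_{l.p.}\Gamma \to \underline{\rep}_{l.p.}\Lambda^s$.

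I would then prove this induced functor is an equivalence by producing an explicit quasi-inverse $G$, the relative analogue of the pushout construction — equivalently, a corrected form of restriction along the ring homomorphism $\Gamma \to \Lambda^s$ sending $\lambda$ to $\operatorname{diag}(\lambda,\lambda)$ and $r\in R$ to its off-diagonal entry. From $(A,B,\nu)$ one builds the $\Gamma$-module $A\oplus B$ with $R$ acting through $\nu$, and then adjusts it to land in $\modu_{l.p.}\Gamma$ and to discard the projective summands $F$ cannot see, namely the projectives of $\Lambda^s$ supported at the primed vertices. Checking $GF \simeq \id$ and $FG \simeq \id$ after stabilising reduces to two assertions: a direct comparison showing $\Hom_{\Lambda^s}(F(M),F(N))$ and $\Hom_\Gamma(M,N)$ coincide modulo morphisms factoring through a projective — i.e.\ that $F$ is full and faithful on the stable categories — and the density statement that every object of $\rep_{l.p.}\Lambda^s$ is stably isomorphic to some $F(M)$, which one reads off from $FG\simeq\id$ and the description of the omitted projectives.

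The hardest part is exactly what the classical proof obtains for free from semisimplicity of $\Lambda_0$. First, the well-definedness of $F$ between the locally projective subcategories: for a non-semisimple $\Lambda_0$ the relative radical layers of a locally projective module need not themselves be locally projective — this already fails in small examples — and it is precisely selfinjectivity of the $\Lambda_\mathtt{i}$ that rescues the construction. Second, the $\Hom$-comparison underlying full faithfulness: one must analyse morphisms between relative projective presentations and show that their discrepancy with $\Hom_\Gamma$ is accounted for entirely by maps through projectives, once more using selfinjectivity to split the relevant short exact sequences of $\Lambda_0$-modules. Everything else — the dictionary, additivity, the behaviour on projectives, and the formal manipulations around $G$ — should be routine.
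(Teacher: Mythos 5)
Your dictionary ($\Gamma=\Lambda_0\ltimes R$ with $\Lambda_0=\prod_\mathtt{i}\Lambda_\mathtt{i}$, $R=\bigoplus_\alpha\Lambda_\alpha$, and $T(\Lambda^s)$ a triangular matrix ring) and the overall shape of the argument (an Auslander--Reiten-style functor into representations of the separated pro-species, then passage to stable categories) agree with the paper. But there is a genuine gap at the centre: the functor $F$ is never actually constructed. You explicitly decline to use the classical assignment $M\mapsto(M/RM,\,RM,\,\bar\mu)$ -- which is exactly the functor the paper works with, namely $F(M)_{\mathtt{i}}=M_\mathtt{i}/\image M_{\mathtt{i},\arrowin}$ and $F(M)_{\overline{\mathtt{i}}}=\image M_{\mathtt{i},\arrowin}$ -- and replace it by the phrase ``read $F(M)$ off from the first terms of a relative projective presentation'', without specifying the resulting $\Lambda^s$-representation, why the assignment is functorial, why it lands in $\rep_{l.p.}\Lambda^s$, or how it compares with the naive functor after stabilisation. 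Since fullness, faithfulness modulo projectives, density, the behaviour on projectives, and the quasi-inverse $G$ are all formulated in terms of this unspecified $F$, none of those steps can be verified; in particular the claim $FG\simeq\id\simeq GF$ ``after stabilising'' is not an argument but a restatement of the theorem.

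Moreover, the two items you yourself flag as ``the hardest part'' -- that $F$ takes values in locally projective $\Lambda^s$-representations, and the $\Hom$-comparison giving stable full faithfulness -- are precisely where the paper's proof does its work, and your proposal offers no argument for either beyond the assertion that selfinjectivity makes the relevant $\Lambda_0$-sequences split; the two facts you cite (that $R\otimes_{\Lambda_0}P$ and $\Hom_{\Lambda_0}(R,P)$ are $\Lambda_0$-projective when $P$ is) do not by themselves yield either statement. For comparison, the paper proceeds concretely: gradability of locally projective $\Gamma$-modules and graded projective covers, the description of $T(\Lambda^s)$-modules as triples together with pushout/pullback squares to prove that $F$ is full, dense and a representation embedding, the decomposition Lemma \ref{separateddecomposes} identifying the locally projective $\Lambda^s$-representations missed by $F$ as the projectives concentrated at single vertices, Lemma \ref{tensoralgebrahereditary} giving $\rep^{np}_{l.p.}\Lambda^s\simeq\underline{\rep}_{l.p.}\Lambda^s$, and, for stable faithfulness, the argument that a morphism $f$ with $F(f)=0$ lies in $\Hom_\Gamma(M,T(\Lambda)_{\geq1}N)$ and then factors through a projective by lifting along $T(\Lambda)_{\geq1}P\twoheadrightarrow T(\Lambda)_{\geq1}N$ for a projective cover $P\to N$, using local selfinjectivity. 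Your outline points at the right landmarks, but it defers exactly the constructions and verifications that constitute the proof, so as it stands it is a plan rather than a proof.
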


Furthermore, extending results of Ib\'a\~nez Cobos, Navarro, and L\'opez Pe\~na in the context of ``generalised path algebras'' we explain the quiver and relations given by Gei\ss, Leclerc, and Schr\"oer by the following

\begin{proposition}[Propositions \ref{quivertensoralgebra} and \ref{quiverpreprojectivealgebra}]\label{mainthm3}
Let $\Lambda$ be a pro-species of algebras such that algebras $\Lambda_\mathtt{i}$ associated to the vertices are given by path algebras of quivers with relations. Then, the tensor algebra $T(\Lambda)$ as well as the preprojective algebra $\Pi(\Lambda)$ have a description in terms of a quiver with relations using the  descriptions of the $\Lambda_\mathtt{i}$ in terms of quivers with relations and descriptions of the $\Lambda_\alpha$ as quotients of their respective projective cover as a bimodule for each arrow $\alpha$.
\end{proposition}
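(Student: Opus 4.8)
The plan is to give an explicit combinatorial description of the tensor algebra $T(\Lambda)$ and the preprojective algebra $\Pi(\Lambda)$ starting from the quiver-and-relations presentations of the vertex algebras $\Lambda_{\mathtt i}$ together with presentations of each bimodule $\Lambda_\alpha$ as a quotient of its projective cover. The overall strategy is to build a \emph{big quiver} whose vertices are the disjoint union of the vertices of the quivers $Q^{(\mathtt i)}$ presenting the $\Lambda_{\mathtt i}$, whose arrows come in two flavours: the ``old'' arrows of each $Q^{(\mathtt i)}$, and ``new'' arrows coming from the arrows $\alpha\colon \mathtt i\to \mathtt j$ of $Q$. For the new arrows one uses that, since $\Lambda_\alpha$ is projective from both sides, its projective cover as a $\Lambda_{\mathtt j}$-$\Lambda_{\mathtt i}$-bimodule is a direct sum of bimodules of the form $\Lambda_{\mathtt j}e_s \otimes_k e_t\Lambda_{\mathtt i}$; each such summand contributes one arrow from the vertex $t$ (in $Q^{(\mathtt i)}$) to the vertex $s$ (in $Q^{(\mathtt j)}$). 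The relations of the big quiver are then: the relations of each $Q^{(\mathtt i)}$; the relations coming from the kernel of the projective cover $P(\Lambda_\alpha)\twoheadrightarrow \Lambda_\alpha$ for each $\alpha$; and, crucially, \emph{commutativity-type relations} recording that the new arrows are $\Lambda_{\mathtt j}$-$\Lambda_{\mathtt i}$-bimodule maps, i.e. that multiplying a new arrow by an old arrow of $Q^{(\mathtt i)}$ on the right equals the corresponding combination on the left coming from the bimodule structure.

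The key steps I would carry out are as follows. First, I would recall the standard fact (presumably established earlier in the paper when $T(\Lambda)$ is introduced) that $T(\Lambda) = \bigoplus_{n\ge 0} T_n$ with $T_0 = \prod_{\mathtt i}\Lambda_{\mathtt i}$ and $T_n$ the direct sum over paths $\mathtt i_0\xrightarrow{\alpha_1}\mathtt i_1\to\cdots\xrightarrow{\alpha_n}\mathtt i_n$ of $\Lambda_{\alpha_n}\otimes_{\Lambda_{\mathtt i_{n-1}}}\cdots\otimes_{\Lambda_{\mathtt i_1}}\Lambda_{\alpha_1}$, with multiplication given by concatenation and the bimodule actions. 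Second, I would fix for each $\mathtt i$ a presentation $\Lambda_{\mathtt i}\cong kQ^{(\mathtt i)}/R^{(\mathtt i)}$ and identify the primitive idempotents $e^{(\mathtt i)}_t$ with the vertices of $Q^{(\mathtt i)}$; this decomposes the identity of $T(\Lambda)$ into the disjoint union of all these primitive idempotents, giving the vertex set of the desired quiver. Third, for each arrow $\alpha\colon \mathtt i\to\mathtt j$ I would choose a projective cover $\pi_\alpha\colon \bigoplus_{(s,t)\in S_\alpha}\Lambda_{\mathtt j}e^{(\mathtt j)}_s\otimes_k e^{(\mathtt i)}_t\Lambda_{\mathtt i}\twoheadrightarrow\Lambda_\alpha$ of $\Lambda_\alpha$ as a bimodule, which exists and has this shape precisely because $\Lambda_\alpha$ is projective from both sides over the (split, or at least with semisimple-modulo-radical) algebras $\Lambda_{\mathtt i},\Lambda_{\mathtt j}$; each pair $(s,t)\in S_\alpha$ yields an arrow $\alpha_{s,t}\colon t\to s$ of the big quiver. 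Fourth, I would define a surjective algebra morphism from the path algebra of the big quiver onto $T(\Lambda)$ by sending old arrows to their images in $\Lambda_{\mathtt i}\subseteq T_0$ and sending $\alpha_{s,t}$ to $\pi_\alpha(e^{(\mathtt j)}_s\otimes e^{(\mathtt i)}_t)\in\Lambda_\alpha = T_1$-component; then I would identify the kernel as the two-sided ideal generated by (a) the lifts of the $R^{(\mathtt i)}$, (b) lifts of generators of $\ker\pi_\alpha$ expressed as linear combinations of paths $(\text{old arrow in }Q^{(\mathtt j)})\cdot\alpha_{s,t}\cdot(\text{old arrow in }Q^{(\mathtt i)})$, and (c) the bimodule-bilinearity relations saying that for $r\in\Lambda_{\mathtt i}$ the two ways of expressing $\alpha_{s,t}\cdot r$ and $r'\cdot\alpha_{s,t}$ agree. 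For $\Pi(\Lambda)$ I would then repeat the construction for the doubled pro-species (using $\Lambda_\alpha$ and its dual/adjoint bimodule for each arrow, with the extra preprojective mesh relation $\sum_\alpha(\text{adjunction unit/counit terms}) = 0$) and describe the resulting quiver as the double of the big quiver together with one additional ``mesh'' relation at each vertex.

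The main obstacle I expect is making the relation description in point (c) both correct and genuinely ``finite/explicit'': writing down the bimodule action of $\Lambda_{\mathtt i}$ (and $\Lambda_{\mathtt j}$) on the chosen generating set $\{e^{(\mathtt j)}_s\otimes e^{(\mathtt i)}_t\}$ of the projective cover in terms of the arrows of $Q^{(\mathtt i)}$ and $Q^{(\mathtt j)}$ requires a careful bookkeeping of how right multiplication by an arrow of $Q^{(\mathtt i)}$ permutes/combines the summands $\Lambda_{\mathtt j}e^{(\mathtt j)}_s\otimes_k e^{(\mathtt i)}_t\Lambda_{\mathtt i}$, and then transporting this structure across $\pi_\alpha$ to $\Lambda_\alpha$. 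Equivalently, one must verify that the path algebra modulo relations (a)--(c) really has the graded pieces matching $T_n$, which amounts to checking that the tensor products $\Lambda_{\alpha_n}\otimes\cdots\otimes\Lambda_{\alpha_1}$ over the $\Lambda_{\mathtt i}$ are correctly presented by ``concatenating the presentations and adding the obvious balancing relations'' -- this is a standard but slightly delicate fact about presenting a tensor product of cyclic (or projectively-presented) bimodules, and is where the work of the proof concentrates. Once that is in place, the surjectivity in step four is immediate by a generating-set argument and the injectivity follows by a dimension/graded-piece count, degree by degree; the preprojective case then adds only the single mesh relation, whose correctness is read off from the explicit description of $\Pi(\Lambda)$ given earlier in the paper, so no essentially new difficulty arises there.
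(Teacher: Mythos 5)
Your proposal is correct and follows essentially the same route as the paper: the same big quiver (the disjoint union of the $\tilde{Q}_\mathtt{i}$ together with one new arrow $t\to s$ for each indecomposable summand $\Lambda_{\mathtt{j}}e_s\otimes_{\mathbbm{k}}e_t\Lambda_{\mathtt{i}}$ of the projective cover $\tilde{P}_\alpha$) and the same relations $\langle R_\mathtt{i},R_\alpha\rangle$, with the preprojective case obtained by doubling and adjoining the Casimir element $c$ as a mesh relation. Two minor remarks: your class (c) of bimodule-bilinearity relations is redundant, since the path algebra modulo the $R_\mathtt{i}$ already realises the ideal generated by the new arrows freely as $\bigoplus\tilde{P}_\alpha$ and any identification of the two bimodule actions is therefore already recorded in $\ker\pi_\alpha$ (compare the generator $\beta\otimes e_1-e_2\otimes\beta$ in Example \ref{glsexample}\,\eqref{gls:i}); and the one verification the paper actually carries out that you do not mention is that the new arrows represent a basis of the radical modulo its square and that the ideal $R$ is admissible, which is exactly where the hypothesis that $\pi_\alpha$ is a projective cover (so that $R_\alpha$ lies in the radical of $\tilde{P}_\alpha$) is used.
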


The article is structured as follows. In Section \ref{sec:species} we introduce the concept of a pro-species of algebras and the corresponding category of representations generalising the original notions due to Gabriel. Furthermore we define the associated tensor algebra $T(\Lambda)$ which has the same representation theory as $\Lambda$. 
Section \ref{sec:gorenstein} shows that $T(\Lambda)$ shows similar behaviour as a hereditary algebra and provides conditions under which it is Iwanaga-Gorenstein. 
In Section \ref{sec:preprojectivealgebra} we introduce the preprojective algebra $\Pi(\Lambda)$ associated to a pro-species of algebras $\Lambda$. Section \ref{sec:reflectionfunctors} contains the definition of the reflection functors and a proof of Theorem \ref{mainthm1}. Section \ref{sec:separated} describes the separated species for a pro-species of algebras and proves a stable equivalence between subcategories of locally projective representations, i.e. Theorem \ref{mainthm2}. The final section, Section \ref{sec:quivers}, which is independent of Sections \ref{sec:gorenstein}, \ref{sec:reflectionfunctors}, and \ref{sec:separated}, gives the quiver and relations of $T(\Lambda)$ and $\Pi(\Lambda)$ as stated in Proposition \ref{mainthm3}.

Throughout let $\mathbbm{k}$ be a field. Unless specified otherwise, modules are left modules. Unless stated otherwise, algebras and modules are assumed to be finite dimensional over $\mathbbm{k}$. For a quiver $Q$ we denote its set of vertices by $Q_0$, the set of arrows by $Q_1$, the set of paths in $Q$ including the length $0$ paths by $Q_p$, the set of paths excluding the length $0$ paths by $Q_p^+$, and by $s,t\colon Q_1\to Q_0$ the functions mapping an arrow to its starting, respectively terminating vertex. Furthermore, throughout we write $M\otimes_A g$ to mean $\id_M\otimes_A g$ for a right $A$-module $M$ and an $A$-linear map $g$.

\section{Pro-species of algebras}
\label{sec:species}

In this section, we generalise the notion of a species as defined by Gabriel \cite{Gab73}. The generalisation is similar to \cite{Li12} with the difference, that we do not start with a valued quiver which we want to ``modulate'' by algebras and bimodules. Instead we start with a gadget consisting of algebras and bimodules and only define (under certain conditions) the corresponding valued quiver. A further difference is the language of bicategories, which is not essential in the setting of this article, but the author hopes it will make it possible in the future to generalise some of the notions to other categories than free categories (i.e. path algebras of a quiver regarded as categories).

\begin{defn}
\begin{enumerate}[(i)]
\item The \emphbf{bicategory of algebras with bimodules}  $\Alg$ (or $\mathbbm{k}\mymathhyphen\Alg$ if we want to emphasise the ground field) is defined as follows:
\begin{description}
\item[objects] are $\mathbbm{k}$-algebras
\item[1-morphisms] the category of 1-morphisms between two objects $A$, $B$ is defined to be the class of finitely generated $B$-$A$-bimodules
\item[2-morphisms] bimodule homomorphisms
\item[1-composition] $[{}_CN_B]\circ [{}_BM_A]:=[{}_CN_B\otimes {}_BM_A]$
\item[2-composition] composition of bimodule homomorphisms
\item[identity] $\id_A={}_AA_A$
\end{description}
\item The \emphbf{bicategory of algebras with bimodules, projective from both sides} $\Algpro$ is the subcategory of  $\Alg$ with the same objects, but for morphisms only taking $B$-$A$-bimodules which are finitely generated projective from either side. Similarly we define $\Algfree$, the \emphbf{bicategory of algebras with bimodules which are free of finite rank from each side}.
\end{enumerate}
\end{defn}

It is easy to see that $\Algpro$ really is a sub-bicategory, i.e. that the tensor product of two bimodules which are projective from either side is again projective from either side and the identity is a module which is projective from either side.

\begin{defn}
Let $Q$ be a (finite) free $\mathbbm{k}$-category (i.e. the path algebra of a quiver, regarded as a category). 
\begin{enumerate}[(i)]
\item A \emphbf{pro-species of algebras} is a $\mathbbm{k}$-linear strict $2$-functor\footnote{It might seem unnatural to consider a strict $2$-functor (instead of a pseudofunctor) to a bicategory, but since $Q$ is a free $\mathbbm{k}$-category, this works. Many things in the sequel will carry over to the setting where $Q$ is not assumed to be free and $\Lambda$ is only assumed to be a pseudofunctor. On some occasions one has to replace a strict notion by the corresponding weak notion.}: $\Lambda\colon Q\to \Algpro$. We write $\Lambda_\mathtt{i}$ for $\Lambda(\mathtt{i})$ when $\mathtt{i}\in Q_0$ and $\Lambda_\alpha$ for $\Lambda(\alpha)$ when $\alpha\in Q_p$.
\item A pro-species of algebras is called a \emphbf{species of algebras} if $\Lambda\colon Q\to \Algfree$.
\end{enumerate}
\end{defn}

\begin{rmk}
Stated in more basic terms, a pro-species of algebras over $Q$ is a $\mathbbm{k}$-algebra $\Lambda_\mathtt{i}$ for each vertex $\mathtt{i}\in Q_0$ and a $\Lambda_\mathtt{j}$-$\Lambda_\mathtt{i}$-bimodule $\Lambda_\alpha$ for each arrow $\alpha\colon \mathtt{i}\to \mathtt{j}$.
\end{rmk}

\begin{ex}
\begin{enumerate}[(a)]
\item If $Q$ is the category with only one object and only scalar multiples of the identity, then a (pro)species of algebras is a $\mathbbm{k}$-algebra.
\item If $\Lambda_\mathtt{i}$ is a $\mathbbm{k}$-division ring for all $\mathtt{i}$, then a pro-species of algebras is a species in the sense of Gabriel, see \cite{Gab73}. A special case is when all $\Lambda_\mathtt{i}$ are in fact the ground field $\mathbbm{k}$, then such $\Lambda$ can be regarded as a $\mathbbm{k}$-quiver.
\end{enumerate}
\end{ex}

To a species of algebras one can associate a valued quiver which provides the link to \cite{Li12}.

\begin{defn} Let $Q$ be a quiver.
\begin{enumerate}[(i)]
\item A \emphbf{valuation} on $Q$ consists of two functions a function $c_0\colon Q_0\to \mathbb{Z}_{\geq 0}, \mathtt{i}\mapsto c_\mathtt{i}$ and $c_1\colon Q_1\to \mathbb{Z}^2_{\geq 0}, \alpha \mapsto (c_\alpha,c_{\alpha^*})$ such that $\forall \mathtt{i}\in Q_0$ there exists $c_\mathtt{i}>0$ with $c_{\alpha}c_{s(\alpha)}=c_{\alpha^*}c_{t(\alpha)}$.
\item Let $\Lambda$ be a species of algebras. Then, the \emphbf{associated valuation} is given by $c_\mathtt{i}:=\dim_k \Lambda_\mathtt{i}$ and $c_\alpha:=\rank_{\Lambda_{s(\alpha)}} \Lambda_\alpha$.
\end{enumerate}
\end{defn}

The next step is to introduce a notion of representation of a pro-species. For this we need another bicategory. This bicategory is in fact the lax coslice bicategory in $\Alg$ over $\mathbbm{k}$.
\footnote{The author would like to thank Pavel Safronov for this observation \href{http://mathoverflow.net/questions/201038}{http://mathoverflow.net/questions/201038}.}

\begin{defn}
\begin{enumerate}[(i)]
\item The \emphbf{bicategory of algebra-module pairs}  $\AlgMod$ (or $\mathbbm{k}\mymathhyphen\AlgMod$ if we want to emphasise the commutative ring we are working over) is defined as follows:
\begin{description}
\item[objects] are pairs $(A,N)$ where $A$ is a $\mathbbm{k}$-algebra and $N$ is an $A$-module,
\item[1-morphisms] the category of $1$-morphisms between two objects $(A,N)$, $(A',N')$ is defined to be the class of pairs $(M,g)$ where $M$ is an $A'$-$A$-bimodule and $g\colon M\otimes_A N\to N'$ is an $A'$-module homomorphism,
\item[2-morphisms] bimodule homomorphisms $\psi\colon (M,g)\to (\tilde{M},\tilde{g})$ such that $\tilde{g}\circ (\psi\otimes N)=g$,
\item[1-composition] $(M',g')\circ (M,g)=(M'\otimes_{A'} M, g'\circ (M'\otimes g))$,
\item[2-composition] composition of bimodule homomorphisms,
\item[identity] $\id_{(A,N)}=(A,\psi)$, where $\psi\colon A\otimes_A N\to N$ is the canonical identification.
\end{description}
\item The \emphbf{bicategory of algebra-module pairs which are projective from both sides} $\AlgproMod$ is the subcategory of $\AlgMod$ with the same objects, but for morphisms only taking $B$-$A$-bimodules which are projective from either side. Similarly we define $\AlgfreeMod$.
\item There is a forgetful functor $\Theta\colon \AlgMod\to \Alg$ forgetting about the module. It restricts to functors $\Theta\colon \AlgproMod\to \Algpro$ and $\Theta\colon \AlgfreeMod\to \Algfree$.
\end{enumerate}
\end{defn}

\begin{defn}
Let $\Lambda\colon Q\to \Algpro$ be a pro-species of algebras. 
\begin{enumerate}[(i)]
\item A \emphbf{representation of $\Lambda$} is a $\mathbbm{k}$-linear strict $2$-functor $M\colon Q\to \AlgproMod$ such that $\Theta M=\Lambda$. 
\item A \emphbf{morphism of $\Lambda$-representations $M\to N$} is a natural transformation $\alpha\colon M\Rightarrow N$ such that $\Theta(\alpha)=\id$. 
\end{enumerate}
All $\Lambda$-representations form a category $\Rep(\Lambda)$ with the composition of natural transformations and the identity natural transformation.
\end{defn}

\begin{rmk}
Again in more basic terms, a $\Lambda$-representation is a $\Lambda_\mathtt{i}$-representation $M_\mathtt{i}$ for each $\mathtt{i}\in Q_0$ and a $\Lambda_{t(\alpha)}$-linear map $M_\alpha\colon \Lambda_\alpha\otimes_{\Lambda_{s(\alpha)}} M_{s(\alpha)}\to M_{t(\alpha)}$ for each $\alpha\in Q_1$.
\end{rmk}

As pro-species of algebras can be regarded as a generalisation of the notion of a $\mathbbm{k}$-quiver, as usual, one can associate an algebra whose category of modules is equivalent to the category of representations of the pro-species.

\begin{defn}
Let $\Lambda\colon Q\to \mathbbm{k}\mymathhyphen\Algpro$ be a pro-species of algebras. Then the \emphbf{tensor algebra} $T(\Lambda)$ of $\Lambda$ is defined as follows. As a $\mathbbm{k}$-vector space it is:
\[T(\Lambda):=\prod_{\mathtt{i}\in Q_0} \Lambda_\mathtt{i}\oplus \bigoplus_{\alpha\in Q_p^+} \Lambda_\alpha\]
The multiplication is given as follows:
\begin{itemize}
\item $\prod \Lambda_\mathtt{i}$ has the usual multiplication of a product of algebras. 
\item By definition each $\Lambda_\alpha$ is an $\Lambda_{t(\alpha)}$-$\Lambda_{s(\alpha)}$-bimodule. Thus, it is also an $\prod \Lambda_\mathtt{i}$-$\prod \Lambda_\mathtt{i}$-bimodule via the projection maps. 
\item For $\lambda_\alpha\in \Lambda_\alpha$ and $\lambda_\beta\in \Lambda_\beta$ we define \[\lambda_\alpha\cdot \lambda_\beta:=\begin{cases}\lambda_\alpha\otimes \lambda_\beta\in \Lambda_{\alpha\beta}=\Lambda_\alpha\otimes\Lambda_\beta&\text{if } t(\beta)=s(\alpha)\\0&\text{else.}\end{cases}\]
\end{itemize}
By $\varepsilon_\mathtt{i}$ we denote the identity element of $\Lambda_\mathtt{i}$, considered as an element of $T(\Lambda)$. 
\end{defn}

Note that $T(\Lambda)$ is unital if and only if $Q$ is finite. In this case $1_{T(\Lambda)}=\sum_{\mathtt{i}\in Q_0} \varepsilon_\mathtt{i}$ is a decomposition into orthogonal idempotents (which is primitive if and only if all the $\Lambda_\mathtt{i}$ are local).  Also note that $T(\Lambda)$ is finite dimensional if and only if $Q$ has no oriented cycles. Unless stated otherwise, we will assume these two properties of $T(\Lambda)$ from now on. 

\begin{ex}\label{examples}
\begin{enumerate}[(a)]
\item Let $A$ be a $\mathbbm{k}$-algebra. If $\Lambda\colon Q\to \mathbbm{k}\mymathhyphen\Algpro$ is defined by $\Lambda_\mathtt{i}=A$ and $\Lambda_\alpha=A$ for all $\mathtt{i}\in Q_0$ and $\alpha\in Q_1$, then $T(\Lambda)=A\otimes_\mathbbm{k} \mathbbm{k}Q$ is the path algebra of $Q$ over $A$.
\item Let $A,B$ be $\mathbbm{k}$-algebras and $M$ an $A$-$B$-bimodule. Let $Q=(\mathtt{1}\stackrel{\alpha}{\to} \mathtt{2})$ be the quiver of Dynkin type $\mathbb{A}_2$. Then, for $\Lambda$ defined by $\Lambda_\mathtt{1}=B$, $\Lambda_\mathtt{2}=A$, $\Lambda_\alpha=M$ we have that $T(\Lambda)\cong\begin{pmatrix}A&M\\0&B\end{pmatrix}$, with multiplication being the usual matrix multiplication. 
\item An extension of case (b) is considered in \cite{LY15} where the authors study $n\times n$-upper triangular matrices (with Frobenius algebras on the diagonal). We explain the relationship: For $\mathtt{i}=\mathtt{1},\dots, \mathtt{n}$ let $A_{\mathtt{i}}$ be algebras and for $\mathtt{i}<\mathtt{j}$ let $B_{\mathtt{i}\mathtt{j}}$ be $A_{\mathtt{i}}$-$A_{\mathtt{j}}$-bimodules which are projective from either side. Define a quiver $Q$ with vertices $\mathtt{1},\dots,\mathtt{n}$ and exactly one arrow $\mathtt{j}\to \mathtt{i}$ whenever $B_{\mathtt{i}\mathtt{j}}\neq 0$. Let $\Lambda\colon Q\to \Algpro$ be the pro-species of algebras defined by $\Lambda_{\mathtt{i}}=A_{\mathtt{i}}$ and $\Lambda_{\alpha}:=B_{\mathtt{i}\mathtt{j}}$ for the unique arrow $\alpha\colon \mathtt{j}\to \mathtt{i}$. Let 
\[A_{\mathtt{i}\mathtt{j}}:=\bigoplus_{l=0}^{\mathtt{j-i-1}}\bigoplus_{\mathtt{i}<\mathtt{k}_1<\mathtt{k}_2<\dots<\mathtt{k}_l<\mathtt{j}}B_{\mathtt{i}\mathtt{k}_1}\otimes_{A_{\mathtt{k}_1}} B_{\mathtt{k}_1\mathtt{k}_2}\otimes_{A_{\mathtt{k}_2}}\dots\otimes_{A_{\mathtt{k}_l}} B_{\mathtt{k}_l\mathtt{j}}.\]
Then, $T(\Lambda)\cong \begin{pmatrix}A_\mathtt{1}&A_{\mathtt{12}}&\dots&A_{\mathtt{1n}}\\0&A_2&\dots&A_{\mathtt{2n}}\\\vdots&\vdots&\ddots&\vdots\\0&0&\dots&A_{\mathtt{n}}\end{pmatrix}$.

Conversely, let $Q$ be an acyclic quiver with $\mathtt{n}$ vertices and $\Lambda\colon Q\to \Algpro$ be a pro-species of algebras. Number the vertices of $Q$ such that all the arrows go in decreasing direction of arrows. Define $B_{\mathtt{ij}}:=\bigoplus_{\alpha\colon \mathtt{j}\to \mathtt{i}} \Lambda_\alpha$ (which should be taken as $0$ if there are no arrows $\mathtt{j}\to \mathtt{i}$). Define $A_{\mathtt{ij}}$ as above. Then $T(\Lambda)$ is isomorphic to the triangular matrix ring defined as above.
\end{enumerate}
\end{ex}

\begin{prop}
Let $\Lambda\colon Q\to \Algpro$ be a pro-species of algebras. The categories of representations of $\Lambda$ and modules over $T(\Lambda)$ are equivalent. In particular, $\operatorname{Rep}(\Lambda)$ is an abelian category.
\end{prop}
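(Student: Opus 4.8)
The plan is to construct a pair of mutually inverse functors between $\Rep(\Lambda)$ and $\Mod T(\Lambda)$ and check that they are exact (or rather, that they are equivalences, from which abelianness of $\Rep(\Lambda)$ follows since $\Mod T(\Lambda)$ is abelian). In one direction, given a representation $M$ of $\Lambda$, i.e.\ a family of $\Lambda_\mathtt{i}$-modules $M_\mathtt{i}$ together with $\Lambda_{t(\alpha)}$-linear maps $M_\alpha\colon \Lambda_\alpha\otimes_{\Lambda_{s(\alpha)}}M_{s(\alpha)}\to M_{t(\alpha)}$ for each $\alpha\in Q_1$, I set $F(M):=\bigoplus_{\mathtt{i}\in Q_0} M_\mathtt{i}$ as a $\mathbbm{k}$-vector space. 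The idempotent $\varepsilon_\mathtt{i}$ acts as the projection onto $M_\mathtt{i}$; the subalgebra $\prod_\mathtt{i}\Lambda_\mathtt{i}$ acts componentwise; and an element $\lambda_\alpha\in\Lambda_\alpha\subseteq T(\Lambda)$ (for $\alpha\in Q_1$) sends $m\in M_{s(\alpha)}$ to $M_\alpha(\lambda_\alpha\otimes m)\in M_{t(\alpha)}$ and kills the other components. One has to check this extends to a well-defined $T(\Lambda)$-module structure: the only nontrivial point is compatibility with the multiplication $\lambda_\alpha\cdot\lambda_\beta=\lambda_\alpha\otimes\lambda_\beta\in\Lambda_{\alpha\beta}$ for composable arrows, and more generally for paths of length $\geq 2$; here one uses that a strict $2$-functor sends the path $\alpha\beta$ to $\Lambda_\alpha\otimes_{\Lambda_{s(\alpha)}}\Lambda_\beta$ and the associated bimodule map to the composite, so the action of $\lambda_\alpha\otimes\lambda_\beta$ on $M_{s(\beta)}$ is exactly the action of $\lambda_\alpha$ after the action of $\lambda_\beta$ — i.e.\ the functoriality/strictness built into the definition of a representation. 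On morphisms, a morphism of representations $(\alpha_\mathtt{i})_\mathtt{i}$ assembles to the $\mathbbm{k}$-linear map $\bigoplus_\mathtt{i}\alpha_\mathtt{i}$, which is $T(\Lambda)$-linear precisely because of the naturality square for each arrow.

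In the other direction, given a $T(\Lambda)$-module $X$, I put $G(X)_\mathtt{i}:=\varepsilon_\mathtt{i}X$, which is a $\Lambda_\mathtt{i}$-module via the embedding $\Lambda_\mathtt{i}\cong\varepsilon_\mathtt{i}T(\Lambda)\varepsilon_\mathtt{i}$ (using that $\varepsilon_\mathtt{i}\Lambda_\alpha\varepsilon_\mathtt{i}=0$ for all arrows since $Q$ is acyclic, so this corner really is $\Lambda_\mathtt{i}$); and for an arrow $\alpha\colon\mathtt{i}\to\mathtt{j}$ the map $G(X)_\alpha\colon\Lambda_\alpha\otimes_{\Lambda_\mathtt{i}}\varepsilon_\mathtt{i}X\to\varepsilon_\mathtt{j}X$ is $\lambda_\alpha\otimes x\mapsto \lambda_\alpha x$, which is well defined and $\Lambda_\mathtt{j}$-linear because the left $\varepsilon_\mathtt{j}T(\Lambda)\varepsilon_\mathtt{i}$-action and right $\varepsilon_\mathtt{i}T(\Lambda)\varepsilon_\mathtt{i}$-action on $\varepsilon_\mathtt{j}T(\Lambda)\varepsilon_\mathtt{i}\supseteq\Lambda_\alpha$ are compatible with module multiplication. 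I then verify $FG\cong\id$ and $GF\cong\id$: the first uses $X=\bigoplus_\mathtt{i}\varepsilon_\mathtt{i}X$ (finiteness of $Q_0$) together with the fact that $T(\Lambda)$ is generated as an algebra by $\prod_\mathtt{i}\Lambda_\mathtt{i}$ and the $\Lambda_\alpha$ with $\alpha\in Q_1$, so the reconstructed action agrees with the original; the second is immediate from $\varepsilon_\mathtt{i}F(M)=M_\mathtt{i}$.

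Finally, since $F$ and $G$ are inverse equivalences and $\Mod T(\Lambda)$ is abelian, $\Rep(\Lambda)$ inherits an abelian structure; one can also remark that kernels and cokernels in $\Rep(\Lambda)$ are computed vertexwise, which matches the transport along $F$.

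I expect the main obstacle to be purely bookkeeping: spelling out why a strict $2$-functor $\Lambda$ together with a representation $M$ (also a strict $2$-functor) makes the candidate action associative, i.e.\ translating the coherence/strictness of $M$ as a $2$-functor into the relation ``action of $\lambda_\alpha\otimes\lambda_\beta$ equals action of $\lambda_\alpha$ composed with action of $\lambda_\beta$'' in $T(\Lambda)$. This is where the assumption that $Q$ is a free category (so that strictness is available and $T(\Lambda)$ has no relations beyond those forced by the tensor products along composable arrows) does the real work; everything else is a routine check that the constructions are functorial and mutually inverse.
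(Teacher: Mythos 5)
Your proposal is correct and follows essentially the same route as the paper: build $F(M)=\bigoplus_{\mathtt{i}}M_{\mathtt{i}}$ with the $\prod_{\mathtt{i}}\Lambda_{\mathtt{i}}$-action via projections and the $\Lambda_\alpha$-action via $M_\alpha$, and invert it by $X\mapsto(\varepsilon_{\mathtt{i}}X)_{\mathtt{i}}$ with the arrow maps given by restricting the module action. The paper leaves the associativity and mutual-inverse checks as ``one easily checks''; you spell them out, which is fine.
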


\begin{proof}
Let $\Theta'\colon \AlgMod\to \Mod \mathbbm{k}$ be the projection on the second component. One easily checks  that $\Phi\colon \operatorname{Rep}(\Lambda)\to \Mod(T(\Lambda))$ defined via $M\mapsto \bigoplus \Theta' M(x)$ with the $\prod \Lambda_\mathtt{i}$-action given by the projections $\prod \Lambda_\mathtt{i}\to \Lambda_\mathtt{j}$ and the action of $\Lambda_\alpha$ given by $\Theta'(M(\alpha))$, is an equivalence with inverse functor given by $V\mapsto M$ with $M(\mathtt{i})=(\Lambda_\mathtt{i},\varepsilon_\mathtt{i}V)$ for all $\mathtt{i}\in Q_0$ and $M(\alpha)$ is given by the restriction of the action $T(\Lambda)\otimes V\to V$ to $\Lambda_\alpha\otimes \varepsilon_\mathtt{i}V\to \varepsilon_\mathtt{j}V$ for an arrow $\alpha\colon \mathtt{i}\to \mathtt{j}$ .
\end{proof}

In some of the later sections it will turn out that the representation theory of a pro-species $\Lambda$ is in some way glued together from the individual representation of the algebras $\Lambda_\mathtt{i}$ sitting on the vertices. This observation yields to the following notions.

\begin{defn}\label{locally}
\begin{enumerate}[(i)]
\item Let $\Lambda\colon Q\to \Alg$ be a pro-species of algebras. We say that a property \emphbf{holds locally} if it holds for all $\Lambda_\mathtt{i}$.
\item Let $M\colon Q\to \AlgproMod$ be a $\Lambda$-representation. We say that a property \emphbf{holds locally} if it holds for all $\Lambda_\mathtt{i}$-modules $M_\mathtt{i}$.
\end{enumerate} 
\end{defn}

The general philosophy should be that if a property holds locally for a pro-species of algebras $\Lambda$, then (a slightly weaker version of) this property holds for the algebra $T(\Lambda)$.\\
Furthermore it should be of interest to understand the category of all $\Lambda$-representations for which a certain local property holds. For example, the results of \cite{GLS16, LY15} show that under certain conditions the representations which are locally free  behave like the (ordinary) representations of $Q$.

\section{Iwanaga-Gorenstein algebras}
\label{sec:gorenstein}

In this section, we provide cases in which $T(\Lambda)$ behaves like a hereditary algebra. Furthermore, we give instances of the general philosophy claimed at the end of the foregoing section. Namely, the algebras $\Lambda_\mathtt{i}$ all being Iwanaga-Gorenstein results in $T(\Lambda)$ being Iwanaga-Gorenstein. In these cases, the category of Gorenstein projective modules as well as the category of modules of finite global dimension have been described locally.

The following is a generalisation of \cite[Proposition 3.1]{GLS16} (see also {\cite[Proposition 2.6]{LY15}}).

\begin{prop}\label{tensoralgebralocallyprojective}
Let $\Lambda\colon Q\to \Algpro$ be a pro-species of algebras. Then, the following statements hold. 
\begin{enumerate}[(i)]
\item\label{tensor:i} The algebra $T(\Lambda)$ regarded as a $\Lambda$-representation is locally projective.
\item\label{tensor:ii} There is the following short exact sequence of $T(\Lambda)$-modules:
\[0\to \bigoplus_{\substack{p\in Q_p^+\\s(\alpha)=\mathtt{i}}}\Lambda_\alpha \to T(\Lambda)\varepsilon_\mathtt{i}\to \Lambda_\mathtt{i}\to 0.\]
In particular, $\projdim_{T(\Lambda)}\Lambda_\mathtt{i}\leq 1$ for all $\mathtt{i}\in Q_0$.
\end{enumerate}
\end{prop}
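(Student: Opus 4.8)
The plan is to unwind both parts into the path-space description of $T(\Lambda)$ and reduce everything to the one fact built into the definition of a pro-species: each $\Lambda_\alpha$ is finitely generated projective as a left $\Lambda_{t(\alpha)}$-module.

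For~(i): Under the equivalence $\operatorname{Rep}(\Lambda)\simeq\Mod T(\Lambda)$ of the previous proposition, the $\Lambda$-representation attached to the left module $T(\Lambda)$ has local component $\varepsilon_\mathtt{i}T(\Lambda)$ at the vertex $\mathtt{i}$, and the multiplication rules in $T(\Lambda)$ give $\varepsilon_\mathtt{i}T(\Lambda)=\bigoplus_{p\in Q_p,\,t(p)=\mathtt{i}}\Lambda_p$ as a left $\Lambda_\mathtt{i}$-module (each $\Lambda_p$ is a $\Lambda_\mathtt{i}$-submodule since $\varepsilon_\mathtt{i}p=p$). So it suffices to show that $\Lambda_p$ is a finitely generated projective left $\Lambda_{t(p)}$-module for every path $p$, which I would prove by induction on the length of $p$. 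The length $0$ case is $\Lambda_p=\Lambda_\mathtt{i}$; for $p=\alpha p'$ with $\alpha$ the first arrow one has $\Lambda_p=\Lambda_\alpha\otimes_{\Lambda_{s(\alpha)}}\Lambda_{p'}$, and writing $\Lambda_{p'}$ as a summand of $\Lambda_{s(\alpha)}^n$ (induction) exhibits $\Lambda_p$ as a summand of $\Lambda_\alpha\otimes_{\Lambda_{s(\alpha)}}\Lambda_{s(\alpha)}^n=\Lambda_\alpha^n$, which is a finitely generated projective left $\Lambda_{t(\alpha)}$-module. Summing over the finitely many $p$ with $t(p)=\mathtt{i}$ (here $Q$ is finite and acyclic) gives~(i).

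For~(ii): The same multiplication rules give $T(\Lambda)\varepsilon_\mathtt{i}=\Lambda_\mathtt{i}\oplus\bigoplus_{p\in Q_p^+,\,s(p)=\mathtt{i}}\Lambda_p$ as a left $T(\Lambda)$-module (I read the displayed index set as the set of nontrivial paths $p$ with $s(p)=\mathtt{i}$), and the second summand is a submodule, since left multiplication by an arrow or by any $\Lambda_\mathtt{j}$ keeps a nontrivial path from $\mathtt{i}$ nontrivial and starting at $\mathtt{i}$; the quotient is $\Lambda_\mathtt{i}$, on which all $\Lambda_\mathtt{j}$ with $\mathtt{j}\neq\mathtt{i}$ and all $\Lambda_\beta$ act by zero, giving the asserted short exact sequence. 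Now $T(\Lambda)\varepsilon_\mathtt{i}$ is projective, being a summand of $T(\Lambda)=\bigoplus_\mathtt{j}T(\Lambda)\varepsilon_\mathtt{j}$. For the kernel I would split a nontrivial path $p$ from $\mathtt{i}$ off its first arrow $\alpha$, writing $p=q\alpha$ with $q$ a path from $t(\alpha)$, to obtain an isomorphism of left $T(\Lambda)$-modules
\[\bigoplus_{\substack{p\in Q_p^+\\ s(p)=\mathtt{i}}}\Lambda_p\;\cong\;\bigoplus_{\substack{\alpha\in Q_1\\ s(\alpha)=\mathtt{i}}}\bigl(T(\Lambda)\varepsilon_{t(\alpha)}\bigr)\otimes_{\Lambda_{t(\alpha)}}\Lambda_\alpha,\]
using $T(\Lambda)\varepsilon_{t(\alpha)}=\bigoplus_{q\in Q_p,\,s(q)=t(\alpha)}\Lambda_q$ and with $T(\Lambda)$ acting on the first tensor factor on the right-hand side. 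Since $\Lambda_\alpha$ is a summand of $\Lambda_{t(\alpha)}^n$ as a left $\Lambda_{t(\alpha)}$-module, each term on the right is a summand of $\bigl(T(\Lambda)\varepsilon_{t(\alpha)}\bigr)^n$, hence projective; so the kernel is projective and $\projdim_{T(\Lambda)}\Lambda_\mathtt{i}\leq 1$.

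The only hypothesis genuinely used is the left-projectivity of the $\Lambda_\alpha$ over $\Lambda_{t(\alpha)}$. The practical work, and the main place to be careful, is verifying that the displayed decompositions are isomorphisms of modules and not merely of $\mathbbm{k}$-vector spaces --- i.e. that path multiplication in $T(\Lambda)$ corresponds exactly to the action on the appropriate tensor factor --- while keeping the ``first arrow versus last arrow'' and the left/right idempotent conventions consistent throughout.
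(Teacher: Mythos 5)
Your proof is correct and follows essentially the same route as the paper: the same decompositions $\varepsilon_\mathtt{i}T(\Lambda)=\bigoplus_{t(p)=\mathtt{i}}\Lambda_p$ and $T(\Lambda)\varepsilon_\mathtt{i}=\Lambda_\mathtt{i}\oplus\bigoplus_{s(p)=\mathtt{i},\,p\neq e_\mathtt{i}}\Lambda_p$, with the kernel regrouped by the arrow at $\mathtt{i}$ as $\bigoplus_{\alpha}T(\Lambda)\varepsilon_{t(\alpha)}\otimes_{\Lambda_{t(\alpha)}}\Lambda_\alpha$ and projectivity deduced from $\Lambda_\alpha$ being a summand of a free $\Lambda_{t(\alpha)}$-module. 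The only cosmetic differences are that the paper uses the tensor grading to see that the projection onto $\Lambda_\mathtt{i}$ is $T(\Lambda)$-linear where you verify the submodule property directly, and that you spell out the induction on path length which the paper leaves implicit.
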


\begin{proof}
For \eqref{tensor:i} note that $\varepsilon_\mathtt{i}T(\Lambda)=\Lambda_\mathtt{i}\oplus \bigoplus_{t(p)=\mathtt{i}}\Lambda_p$. Hence $\varepsilon_\mathtt{i}T(\Lambda)$ is a projective $\Lambda_\mathtt{i}$-module. 

For \eqref{tensor:ii} note that if we set $\deg \Lambda_p=|p|$, the length of the path $p$ in $Q$ and $\deg \Lambda_\mathtt{i}=0$ then $T(\Lambda)$ is a graded algebra. With respect to this grading, the projection $T(\Lambda)\varepsilon_\mathtt{i}\to \Lambda_\mathtt{i}$ is the projection on the degree $0$ component and is therefore $T(\Lambda)$-linear. Its kernel is obviously $\bigoplus_{s(p)=\mathtt{i}} \Lambda_p$. It remains to prove that the kernel is a projective $T(\Lambda)$-module. This follows from the fact that $\bigoplus_{s(p)=\mathtt{i}} \Lambda_p=\bigoplus_{s(p)=\mathtt{i}}\bigoplus_{p=q\beta, \beta\in Q_1(\mathtt{i},\mathtt{j}), q\in Q_p}\Lambda_q\otimes_{\Lambda_\mathtt{j}} \Lambda_\beta$, and the fact that since $\Lambda_q$ is a projective $\Lambda_{t(q)}$-module, this is a direct summand of $\bigoplus_{\beta\colon \mathtt{i}\to \mathtt{j}}(T(\Lambda)\varepsilon_\mathtt{j})^{m_\mathtt{j}}$ for some $m_\mathtt{j}$.
\end{proof}

The following lemma describing a bimodule resolution of $T(\Lambda)$ generalises \cite[Proposition 7.1]{GLS16} and \cite[Lemma 3.3]{LY15}. All statements are special cases of \cite[Theorems 10.1 and 10.5]{Sch85}.

\begin{prop}
There is a short exact sequence of $T(\Lambda)$-$T(\Lambda)$-bimodules
\[
\begin{tikzcd}
P_\bullet\colon 0\arrow{r}&\bigoplus_{\alpha\in Q_1} T(\Lambda)\varepsilon_{t(\alpha)}\otimes_{\Lambda_{t(\alpha)}} \Lambda_\alpha\otimes_{\Lambda_{s(\alpha)}}\varepsilon_{s(\alpha)}T(\Lambda)\\
{\phantom{P_\bullet\colon 0}}\arrow{r}{d}&\bigoplus_{\mathtt{i}\in Q_0} T(\Lambda)\varepsilon_\mathtt{i}\otimes_{\Lambda_\mathtt{i}}\varepsilon_\mathtt{i} T(\Lambda)\arrow{r}{\mult}&T(\Lambda)\arrow{r} &0
\end{tikzcd}
\]
where $\mult$ denotes the natural multiplication and $d(p\otimes h\otimes q):=ph\otimes q-p\otimes hq$.
\end{prop}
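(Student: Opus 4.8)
The plan is to verify directly that the displayed sequence of $T(\Lambda)$-$T(\Lambda)$-bimodules is exact, treating it as the standard bimodule (bar-type) resolution of a tensor algebra cut off in low degrees. First I would check that the maps are well defined bimodule homomorphisms: $\mult$ is the restriction of the multiplication of $T(\Lambda)$ to the sub-bimodule $\bigoplus_\mathtt{i} T(\Lambda)\varepsilon_\mathtt{i}\otimes_{\Lambda_\mathtt{i}}\varepsilon_\mathtt{i}T(\Lambda)$, which makes sense because $\varepsilon_\mathtt{i}$ is a central idempotent of $\prod_\mathtt{j}\Lambda_\mathtt{j}$ and hence the tensor product over $\Lambda_\mathtt{i}$ is identified with the appropriate component; the formula $d(p\otimes h\otimes q)=ph\otimes h'$ wait — $d(p\otimes h\otimes q)=ph\otimes q-p\otimes hq$ is balanced over $\Lambda_{t(\alpha)}$ in the first tensor slot and over $\Lambda_{s(\alpha)}$ in the last, so it lands in $\bigoplus_\mathtt{i}T(\Lambda)\varepsilon_\mathtt{i}\otimes_{\Lambda_\mathtt{i}}\varepsilon_\mathtt{i}T(\Lambda)$ once one notes $ph\in T(\Lambda)\varepsilon_{s(\alpha)}$ and $hq\in\varepsilon_{t(\alpha)}T(\Lambda)$. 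Surjectivity of $\mult$ is immediate since $\sum_\mathtt{i}\varepsilon_\mathtt{i}$ is the identity, so $x=\mult(\sum_\mathtt{i}\varepsilon_\mathtt{i}\otimes\varepsilon_\mathtt{i}x)$. Injectivity of $d$ and exactness in the middle I would establish by exploiting the grading $\deg\Lambda_p=|p|$, $\deg\Lambda_\mathtt{i}=0$ from the proof of Proposition~\ref{tensoralgebralocallyprojective}: all three terms and all three maps are graded (with $T(\Lambda)\varepsilon_{t(\alpha)}\otimes\Lambda_\alpha\otimes\varepsilon_{s(\alpha)}T(\Lambda)$ placed so that $d$ preserves degree), so it suffices to check exactness degree by degree, where each graded piece is a finite direct sum of terms of the form $\Lambda_{q_1}\otimes_{\Lambda_\bullet}\Lambda_\alpha\otimes_{\Lambda_\bullet}\Lambda_{q_2}$, $\Lambda_{q_1}\otimes_{\Lambda_\mathtt{i}}\Lambda_{q_2}$, and $\Lambda_p$.

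Concretely, in degree $n$ the sequence becomes, after using that each $\Lambda_p$ for a path $p=\beta_1\cdots\beta_r$ is $\Lambda_{\beta_1}\otimes_{\Lambda_\bullet}\cdots\otimes_{\Lambda_\bullet}\Lambda_{\beta_r}$, a complex whose middle-and-right part is the familiar telescoping identity: for a fixed decomposition of a length-$n$ path into a left part, a single arrow $\alpha$, and a right part, $d$ sends the arrow across, and the alternating cancellation shows that $\ker(\mult)=\operatorname{im}(d)$ and that $d$ is injective. This is exactly the combinatorics of the reduced bar resolution of a free category / path algebra, now with coefficients in the bimodules $\Lambda_\alpha$; because all the $\Lambda_\alpha$ are projective (indeed we only need that tensoring is associative and that the relevant $\varepsilon_\mathtt{i}$ act as identities) the argument goes through verbatim. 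Alternatively, and perhaps more cleanly, I would appeal to the cited \cite[Theorems 10.1 and 10.5]{Sch85}: Schofield's bimodule resolution for tensor algebras $T_R(B)=R\oplus B\oplus (B\otimes_R B)\oplus\cdots$ gives precisely
\[
0\to T(\Lambda)\otimes_R B\otimes_R T(\Lambda)\xrightarrow{d} T(\Lambda)\otimes_R T(\Lambda)\xrightarrow{\mult} T(\Lambda)\to 0
\]
with $R=\prod_\mathtt{i}\Lambda_\mathtt{i}$ and $B=\bigoplus_{\alpha\in Q_1}\Lambda_\alpha$; unwinding $T(\Lambda)\otimes_R(-)$ into the idempotent components $\varepsilon_\mathtt{i}$ and $B$ into its arrow summands reproduces the displayed direct-sum decomposition, and $d(p\otimes h\otimes q)=ph\otimes q-p\otimes hq$ is Schofield's differential written out. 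I would present both: the hands-on graded verification as the self-contained argument, and the reference to \cite{Sch85} as the conceptual one.

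The main obstacle is bookkeeping rather than anything deep: one must be careful that all tensor products are taken over the correct $\Lambda_\mathtt{i}$ (not over $R$ or over $\mathbbm{k}$), that the identifications $T(\Lambda)\varepsilon_\mathtt{i}\otimes_{\Lambda_\mathtt{i}}\varepsilon_\mathtt{i}T(\Lambda)\subseteq T(\Lambda)\otimes_R T(\Lambda)$ and $T(\Lambda)\varepsilon_{t(\alpha)}\otimes_{\Lambda_{t(\alpha)}}\Lambda_\alpha\otimes_{\Lambda_{s(\alpha)}}\varepsilon_{s(\alpha)}T(\Lambda)\subseteq T(\Lambda)\otimes_R B\otimes_R T(\Lambda)$ are the obvious ones coming from the orthogonal idempotent decomposition $1=\sum_\mathtt{i}\varepsilon_\mathtt{i}$, and that $d$ respects the chosen grading so that the degreewise argument is legitimate. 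Once these identifications are fixed, exactness is forced by the telescoping cancellation, and the projectivity of the $\Lambda_\alpha$ over $\Lambda_{s(\alpha)}$ and $\Lambda_{t(\alpha)}$ guarantees that the outer two terms are genuinely projective $T(\Lambda)$-$T(\Lambda)$-bimodules (a fact we do not even need for the statement as given, but which is the reason $P_\bullet$ deserves to be called a bimodule resolution).
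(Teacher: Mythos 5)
Your proposal is correct. The paper in fact offers no proof of this proposition at all beyond the remark preceding it that ``all statements are special cases of \cite[Theorems 10.1 and 10.5]{Sch85}'', so your second route --- recognising $T(\Lambda)=T_R(B)$ with $R=\prod_\mathtt{i}\Lambda_\mathtt{i}$, $B=\bigoplus_{\alpha\in Q_1}\Lambda_\alpha$, and unwinding Schofield's resolution along the orthogonal idempotents $\varepsilon_\mathtt{i}$ --- is exactly the paper's argument. Your self-contained graded/telescoping verification is a valid supplement (and, as you note, exactness needs no projectivity of the $\Lambda_\alpha$, only $B\otimes_R B\cong T_2$ and the idempotent identifications); just delete the stray mid-sentence self-correction before the formula for $d$.
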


As an application we obtain projective resolutions not only of $\Lambda_\mathtt{i}$, but of all locally projective $\Lambda$-modules. We denote the category of such modules by $\rep_{l.p.}(\Lambda)$. 
It is easy to see that (for $Q$ acyclic) this category is the extension closure of $\add\{\Lambda_\mathtt{i}\, |\, \mathtt{i}\in Q_0\}$. Further, let $\rep_{l.p.}^{np}(\Lambda)$ be the full subcategory of $\Lambda$-representations without projective direct summands. 

\begin{cor}
Let $\Lambda$ be a pro-species of algebras. Let $M\in \rep_{l.p.}(\Lambda)$. Then $P_\bullet\otimes_{T(\Lambda)} M$ is a projective resolution of $M$ which explicitly looks as follows
\[
\begin{tikzcd}
0\arrow{r}&\bigoplus_{\alpha\in Q_0} T(\Lambda)\varepsilon_{t(\alpha)}\otimes_{\Lambda_{t(\alpha)}}\Lambda_\alpha\otimes_{\Lambda_{s(\alpha)}} M_{s(\alpha)}\\
{}\arrow{r}{d\otimes M}&\bigoplus_{\mathtt{i}\in Q_0} T(\Lambda)\varepsilon_\mathtt{i}\otimes_{\Lambda_\mathtt{i}} M_\mathtt{i}\arrow{r}{\mult} &M\arrow{r}&0
\end{tikzcd}
\]
with $(d\otimes M)(p\otimes h\otimes m)=ph\otimes m-p\otimes M_\alpha(h\otimes m)$.
\end{cor}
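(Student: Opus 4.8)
The plan is to apply the bimodule short exact sequence $P_\bullet$ from the previous proposition by tensoring with $M$ over $T(\Lambda)$ on the right. Write $T = T(\Lambda)$ for brevity. The sequence $P_\bullet$ consists of bimodules whose left-hand factors are of the form $T\varepsilon_\mathtt{i}$ (or $T\varepsilon_{t(\alpha)}$), and when we apply $-\otimes_T M$ the middle and left terms become $\bigoplus_\mathtt{i} T\varepsilon_\mathtt{i}\otimes_{\Lambda_\mathtt{i}} \varepsilon_\mathtt{i}M = \bigoplus_\mathtt{i} T\varepsilon_\mathtt{i}\otimes_{\Lambda_\mathtt{i}} M_\mathtt{i}$ and $\bigoplus_{\alpha} T\varepsilon_{t(\alpha)}\otimes_{\Lambda_{t(\alpha)}} \Lambda_\alpha\otimes_{\Lambda_{s(\alpha)}} M_{s(\alpha)}$, using that $\varepsilon_\mathtt{i} T\otimes_T M\cong \varepsilon_\mathtt{i} M = M_\mathtt{i}$ and the associativity of tensor product. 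Tracing the differential $d(p\otimes h\otimes q)=ph\otimes q-p\otimes hq$ through the isomorphism $\varepsilon_{s(\alpha)}T\otimes_T M\cong M_{s(\alpha)}$ sends $q\otimes m$ to the action $M_\alpha$ evaluated appropriately, yielding the stated formula $(d\otimes M)(p\otimes h\otimes m)=ph\otimes m - p\otimes M_\alpha(h\otimes m)$; and $\mult\otimes M$ becomes the structure map $\bigoplus_\mathtt{i} T\varepsilon_\mathtt{i}\otimes_{\Lambda_\mathtt{i}} M_\mathtt{i}\to M$ induced by the $T$-action on $M$.

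The substantive point is exactness of $P_\bullet\otimes_T M$, i.e. that $\operatorname{Tor}_1^T(P_\bullet\text{-terms}, M)$ contributions vanish so that the short exact sequence stays short exact after tensoring. The clean way to see this is to observe that each term of $P_\bullet$ is a projective right $T$-module: $\varepsilon_{s(\alpha)}T$ is projective as a right $T$-module, hence so is $T\varepsilon_{t(\alpha)}\otimes_{\Lambda_{t(\alpha)}}\Lambda_\alpha\otimes_{\Lambda_{s(\alpha)}}\varepsilon_{s(\alpha)}T$ — here one uses that $\Lambda_\alpha$ is projective as a right $\Lambda_{s(\alpha)}$-module (this is exactly where the $\Algpro$ hypothesis enters) so that $\Lambda_\alpha\otimes_{\Lambda_{s(\alpha)}}\varepsilon_{s(\alpha)}T$ is a direct summand of a finite direct sum of copies of $\varepsilon_{s(\alpha)}T$, and tensoring the projective left module $T\varepsilon_{t(\alpha)}$ over $\Lambda_{t(\alpha)}$ with a projective right $T$-module gives a projective $T$-$T$-bimodule, in particular projective as a right $T$-module. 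Thus $P_\bullet$ is a short exact sequence of modules that are projective (hence flat) as right $T$-modules, so $-\otimes_T M$ preserves its exactness.

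It then remains to check that $P_\bullet\otimes_T M$ is a \emph{projective} resolution of $M$, i.e. that its two nonzero terms are projective $T$-modules. For the degree-zero term $\bigoplus_\mathtt{i} T\varepsilon_\mathtt{i}\otimes_{\Lambda_\mathtt{i}} M_\mathtt{i}$: since $M\in\rep_{l.p.}(\Lambda)$, each $M_\mathtt{i}$ is a projective $\Lambda_\mathtt{i}$-module, hence a direct summand of $\Lambda_\mathtt{i}^{n_\mathtt{i}}$, so $T\varepsilon_\mathtt{i}\otimes_{\Lambda_\mathtt{i}} M_\mathtt{i}$ is a direct summand of $(T\varepsilon_\mathtt{i})^{n_\mathtt{i}}$, a projective $T$-module. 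For the degree-one term, the same argument applied to the projective $\Lambda_{s(\alpha)}$-module $M_{s(\alpha)}$ shows $T\varepsilon_{t(\alpha)}\otimes_{\Lambda_{t(\alpha)}}\Lambda_\alpha\otimes_{\Lambda_{s(\alpha)}}M_{s(\alpha)}$ is a direct summand of a finite sum of copies of $T\varepsilon_{t(\alpha)}\otimes_{\Lambda_{t(\alpha)}}\Lambda_\alpha$, which in turn (using $\Lambda_\alpha$ projective as a left $\Lambda_{t(\alpha)}$-module) is a direct summand of a finite sum of copies of $T\varepsilon_{t(\alpha)}$ — again projective over $T$; this is precisely the computation already made in the proof of Proposition~\ref{tensoralgebralocallyprojective}\eqref{tensor:ii}. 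The main obstacle, such as it is, is purely bookkeeping: unwinding the isomorphisms $\varepsilon_\mathtt{i}T\otimes_T M\cong M_\mathtt{i}$ and $(T\varepsilon_{t(\alpha)}\otimes_{\Lambda_{t(\alpha)}}\Lambda_\alpha\otimes_{\Lambda_{s(\alpha)}}\varepsilon_{s(\alpha)}T)\otimes_T M\cong T\varepsilon_{t(\alpha)}\otimes_{\Lambda_{t(\alpha)}}\Lambda_\alpha\otimes_{\Lambda_{s(\alpha)}}M_{s(\alpha)}$ carefully enough to read off the claimed formula for $d\otimes M$, and noting that the index set in the displayed resolution should run over $\alpha\in Q_1$ rather than $Q_0$ as a typo.
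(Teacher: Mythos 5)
Your proposal is correct and takes essentially the same route as the paper: the paper simply observes that $P_\bullet$ splits as a sequence of right $T(\Lambda)$-modules because its cokernel $T(\Lambda)$ is right-projective (so only the rightmost term needs checking, rather than the flatness of every term), and then, exactly as you do, deduces projectivity of the two left terms of $P_\bullet\otimes_{T(\Lambda)}M$ from local projectivity of $M$ by the argument of Proposition~\ref{tensoralgebralocallyprojective}. Your identification of the differential and your note that the index set should read $\alpha\in Q_1$ are both accurate.
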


\begin{proof}
As $T(\Lambda)$ is projective as a right $T(\Lambda)$-module, the short exact sequence $P_\bullet$ splits as right $T(\Lambda)$-modules. Thus, the sequence remains exact by tensoring with $M$ and can be identified with the claimed sequence. If $M$ is locally projective, then the two left terms are indeed projective $T(\Lambda)$-modules by a similar argument as in the proof of Proposition \ref{tensoralgebralocallyprojective}. The claim follows.
\end{proof}

We recall the definition of an Iwanaga-Gorenstein algebra, sometimes also called Gorenstein algebra. This is a generalisation of the class of selfinjective algebras (which is the case $n=0$). For an introduction to the topic, see e.g. \cite{Che10}.

\begin{defn}
An algebra $A$ is called \emphbf{$n$-Iwanaga-Gorenstein} if $\injdim_A A\leq n$ and $\projdim_AD(A)\leq n$. 
\end{defn}

The following proposition generalises \cite[Theorem 1.1]{GLS16} where the case $n=0$ is proven for a specific example (see also \cite[Corollary 2.8]{LY15}). Using the language of triangular matrix rings, it is proven in \cite[Corollary 3.6, Proposition 3.8]{Wan16}. For a slightly different proof, it is possible to generalise the proof of \cite[Theorem 1.1]{GLS16}.

\begin{prop}\label{locallyGorenstein}
Let $\Lambda$ be a locally $n$-Iwanaga-Gorenstein pro-species in the sense of Definition \ref{locally}. Then the following are equivalent for a $T(\Lambda)$-module $U$:
\begin{enumerate}[(1)]
\item\label{locallyGorenstein:1} $\projdim U\leq n+1$,
\item\label{locallyGorenstein:2} $\projdim U<\infty$,
\item\label{locallyGorenstein:3} $\injdim U\leq n+1$,
\item\label{locallyGorenstein:4} $\injdim U<\infty$,
\item\label{locallyGorenstein:5} $U$ locally has projective dimension $\leq n$,
\item\label{locallyGorenstein:6} $U$ locally has injective dimension $\leq n$.
\end{enumerate}
In particular $T(\Lambda)$ is $(n+1)$-Gorenstein.
\end{prop}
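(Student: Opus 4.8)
The plan is to prove the cyclic chain of implications $\eqref{locallyGorenstein:1}\Rightarrow\eqref{locallyGorenstein:2}\Rightarrow\eqref{locallyGorenstein:5}\Rightarrow\eqref{locallyGorenstein:1}$, and dually $\eqref{locallyGorenstein:3}\Rightarrow\eqref{locallyGorenstein:4}\Rightarrow\eqref{locallyGorenstein:6}\Rightarrow\eqref{locallyGorenstein:3}$, and then to link the two halves by a single extra argument, after which the ``in particular'' follows immediately by applying the equivalence to $U=T(\Lambda)$ (using $\projdim T(\Lambda)\le n+1$ since $T(\Lambda)$ is locally projective, hence locally of projective dimension $0\le n$, by Proposition \ref{tensoralgebralocallyprojective}\eqref{tensor:i}) and to $U=D(T(\Lambda))$. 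The implications $\eqref{locallyGorenstein:1}\Rightarrow\eqref{locallyGorenstein:2}$ and $\eqref{locallyGorenstein:3}\Rightarrow\eqref{locallyGorenstein:4}$ are trivial.

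First I would establish $\eqref{locallyGorenstein:2}\Rightarrow\eqref{locallyGorenstein:5}$: if $\projdim_{T(\Lambda)}U<\infty$, pick a finite projective resolution $0\to P_m\to\cdots\to P_0\to U\to 0$; each $P_j$ is locally projective by Proposition \ref{tensoralgebralocallyprojective}\eqref{tensor:i}, so restricting to $\Lambda_\mathtt{i}$ gives a finite projective resolution of $U_\mathtt{i}$ as a $\Lambda_\mathtt{i}$-module, whence $\projdim_{\Lambda_\mathtt{i}}U_\mathtt{i}<\infty$; since $\Lambda_\mathtt{i}$ is $n$-Iwanaga-Gorenstein, finite projective dimension forces $\projdim_{\Lambda_\mathtt{i}}U_\mathtt{i}\le n$, which is exactly \eqref{locallyGorenstein:5}. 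For the converse $\eqref{locallyGorenstein:5}\Rightarrow\eqref{locallyGorenstein:1}$, the key tool is the bimodule resolution $P_\bullet$ of $T(\Lambda)$ from the preceding proposition: tensoring $P_\bullet$ over $T(\Lambda)$ with $U$ yields a two-term complex
\[
0\to\bigoplus_{\alpha\in Q_1}T(\Lambda)\varepsilon_{t(\alpha)}\otimes_{\Lambda_{t(\alpha)}}\Lambda_\alpha\otimes_{\Lambda_{s(\alpha)}}U_{s(\alpha)}\to\bigoplus_{\mathtt{i}\in Q_0}T(\Lambda)\varepsilon_\mathtt{i}\otimes_{\Lambda_\mathtt{i}}U_\mathtt{i}\to U\to 0,
\]
which (as in the Corollary, using that $P_\bullet$ splits on the right since $T(\Lambda)$ is right projective) is exact. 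Now if each $U_\mathtt{i}$ has $\projdim_{\Lambda_\mathtt{i}}U_\mathtt{i}\le n$, then $T(\Lambda)\varepsilon_\mathtt{i}\otimes_{\Lambda_\mathtt{i}}U_\mathtt{i}$ and $T(\Lambda)\varepsilon_{t(\alpha)}\otimes_{\Lambda_{t(\alpha)}}\Lambda_\alpha\otimes_{\Lambda_{s(\alpha)}}U_{s(\alpha)}$ have projective dimension $\le n$ over $T(\Lambda)$: indeed $T(\Lambda)\varepsilon_\mathtt{i}$ is projective as a right $\Lambda_\mathtt{i}$-module (a summand of a free one), so $T(\Lambda)\varepsilon_\mathtt{i}\otimes_{\Lambda_\mathtt{i}}-$ sends a length-$n$ $\Lambda_\mathtt{i}$-projective resolution to a length-$n$ $T(\Lambda)$-projective resolution, and similarly for the $\alpha$-terms using that $\Lambda_\alpha$ is projective on the left. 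A complex of length $2$ with entries of projective dimension $\le n$ has total projective dimension $\le n+1$, giving \eqref{locallyGorenstein:1}.

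The dual chain $\eqref{locallyGorenstein:4}\Rightarrow\eqref{locallyGorenstein:6}\Rightarrow\eqref{locallyGorenstein:3}$ I would obtain by applying the above to the opposite algebra. Note $T(\Lambda)^{\op}\cong T(\Lambda^{\op})$ where $\Lambda^{\op}$ is the pro-species over $Q^{\op}$ with $(\Lambda^{\op})_\mathtt{i}=\Lambda_\mathtt{i}^{\op}$ and $(\Lambda^{\op})_\alpha=\Lambda_\alpha$ regarded as a $\Lambda_{s(\alpha)}^{\op}$-$\Lambda_{t(\alpha)}^{\op}$-bimodule (still projective from both sides); moreover $\Lambda_\mathtt{i}^{\op}$ is $n$-Iwanaga-Gorenstein iff $\Lambda_\mathtt{i}$ is, so $\Lambda^{\op}$ is again locally $n$-Iwanaga-Gorenstein. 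Applying the already-proven equivalences $\eqref{locallyGorenstein:1}\Leftrightarrow\eqref{locallyGorenstein:2}\Leftrightarrow\eqref{locallyGorenstein:5}$ for $T(\Lambda^{\op})$ to the module $DU$ (and using that $D$ is an exact duality interchanging projective and injective resolutions, projective dimension over $T(\Lambda)^{\op}$ with injective dimension over $T(\Lambda)$, and $\projdim_{\Lambda_\mathtt{i}^{\op}}(DU)_\mathtt{i}$ with $\injdim_{\Lambda_\mathtt{i}}U_\mathtt{i}$) yields precisely $\eqref{locallyGorenstein:3}\Leftrightarrow\eqref{locallyGorenstein:4}\Leftrightarrow\eqref{locallyGorenstein:6}$.

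Finally I would bridge the two groups. It suffices to prove, say, $\eqref{locallyGorenstein:2}\Leftrightarrow\eqref{locallyGorenstein:4}$, i.e. that $\projdim_{T(\Lambda)}U<\infty$ iff $\injdim_{T(\Lambda)}U<\infty$; this is a standard consequence of $T(\Lambda)$ being Iwanaga-Gorenstein, but we must first know $T(\Lambda)$ is Iwanaga-Gorenstein — which is exactly the ``in particular''. To avoid circularity I would instead argue directly: by the already-proven half, $\projdim_{T(\Lambda)}T(\Lambda)\le n+1$ and $\injdim_{T(\Lambda)}D(T(\Lambda))\le n+1$ are clear from \eqref{locallyGorenstein:5}/\eqref{locallyGorenstein:6} applied to $T(\Lambda)$ and $D(T(\Lambda))$, which are locally projective resp. locally injective hence of local projective resp. injective dimension $0\le n$; but to deduce $\injdim_{T(\Lambda)}T(\Lambda)\le n+1$ one also needs $\projdim_{T(\Lambda)}D(T(\Lambda))\le n+1$ — which again is \eqref{locallyGorenstein:1} applied to $D(T(\Lambda))$ via the dual chain, valid since $D(T(\Lambda))$ is locally injective, hence locally of injective dimension $\le n$, hence of finite injective dimension, hence (dual chain) of finite projective dimension $\le n+1$. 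Thus $T(\Lambda)$ is $(n+1)$-Iwanaga-Gorenstein; then the classical fact that over an Iwanaga-Gorenstein algebra finite projective dimension and finite injective dimension coincide (and are both bounded by the Gorenstein dimension) closes the cycle, linking the two groups of conditions and completing the proof. The main obstacle is precisely organising this bridging step so that the ``in particular'' is proven before it is used to equate \eqref{locallyGorenstein:2} and \eqref{locallyGorenstein:4}; everything else reduces to the bimodule resolution and the behaviour of $T(\Lambda)\varepsilon_\mathtt{i}\otimes_{\Lambda_\mathtt{i}}-$ on resolutions.
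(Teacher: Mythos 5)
The paper does not actually prove this proposition: it is stated with a pointer to \cite[Corollary 3.6, Proposition 3.8]{Wan16} (in the language of triangular matrix rings) and the remark that one could alternatively generalise the proof of \cite[Theorem 1.1]{GLS16}. Your proposal therefore supplies a self-contained argument where the paper defers to the literature, and in substance it is the right argument: the chain \eqref{locallyGorenstein:1}$\Rightarrow$\eqref{locallyGorenstein:2}$\Rightarrow$\eqref{locallyGorenstein:5}$\Rightarrow$\eqref{locallyGorenstein:1} via restriction along the idempotents (using that projective $T(\Lambda)$-modules are locally projective, Proposition \ref{tensoralgebralocallyprojective}\eqref{tensor:i}, plus Iwanaga's theorem for each $\Lambda_\mathtt{i}$) and via the two-term resolution $P_\bullet\otimes_{T(\Lambda)}U$ (whose exactness indeed only needs the right-splitting of $P_\bullet$, not local projectivity of $U$) is exactly how one generalises the GLS proof, and the passage to $T(\Lambda)^{\op}\cong T(\Lambda^{\op})$ for the injective chain is fine.

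One sentence of your bridging step is circular as literally written: you justify $\projdim_{T(\Lambda)}D(T(\Lambda))\le n+1$ by ``locally injective, hence of finite injective dimension, hence (dual chain) of finite projective dimension''. But your dual chain is \eqref{locallyGorenstein:3}$\Leftrightarrow$\eqref{locallyGorenstein:4}$\Leftrightarrow$\eqref{locallyGorenstein:6} and only relates injective-dimension conditions; passing from finite injective dimension to finite projective dimension is precisely the bridge you have not yet built. The repair is immediate and uses only what you already have: $\varepsilon_\mathtt{i}D(T(\Lambda))=D(T(\Lambda)\varepsilon_\mathtt{i})$ is an injective $\Lambda_\mathtt{i}$-module, hence of projective dimension $\le n$ because $\Lambda_\mathtt{i}$ is $n$-Iwanaga-Gorenstein (it is a summand of copies of $D(\Lambda_\mathtt{i})$, whose projective dimension is $\injdim (\Lambda_\mathtt{i})_{\Lambda_\mathtt{i}}\le n$); so \eqref{locallyGorenstein:5} holds for $D(T(\Lambda))$ and \eqref{locallyGorenstein:5}$\Rightarrow$\eqref{locallyGorenstein:1} gives $\projdim_{T(\Lambda)}D(T(\Lambda))\le n+1$ directly. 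Likewise $\injdim_{T(\Lambda)}T(\Lambda)\le n+1$ follows from \eqref{locallyGorenstein:6}$\Rightarrow$\eqref{locallyGorenstein:3} applied to $T(\Lambda)$, which is locally projective hence locally of injective dimension $\le n$. With the ``in particular'' thus secured from the two chains alone, the classical Iwanaga theorem (see \cite{Che10}) legitimately supplies \eqref{locallyGorenstein:2}$\Leftrightarrow$\eqref{locallyGorenstein:4} and closes the proof.
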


The following corollary was stated as \cite[Theorem 3.9]{GLS16} for the special case $n=0$. There, it was proved directly generalising the methods in \cite{AR91}.

\begin{cor}
Let $\Lambda$ be a locally $n$-Iwanaga-Gorenstein pro-species. Then the category of modules which are locally of projective dimension $\leq n$ is functorially finite, resolving and coresolving and has Auslander--Reiten sequences.
\end{cor}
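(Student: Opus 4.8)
The plan is to deduce everything from Proposition~\ref{locallyGorenstein} together with standard properties of the class $\mathcal{X} := \{U \in \modu T(\Lambda) \mid \projdim U < \infty\}$, which by that proposition coincides with the class of modules that are locally of projective dimension $\leq n$ and also with $\{U \mid \injdim U < \infty\}$. First I would record that, since $T(\Lambda)$ is $(n+1)$-Iwanaga-Gorenstein (last sentence of Proposition~\ref{locallyGorenstein}), the subcategory $\mathcal{X}$ is precisely $^{\perp}T(\Lambda)$ in the sense of vanishing of all higher $\Ext$, and dually $\mathcal{X} = (D(T(\Lambda)))^{\perp}$. This is the content of the classical theory of Gorenstein algebras (see e.g.\ \cite{Che10}): over an Iwanaga-Gorenstein algebra the modules of finite projective dimension form a contravariantly and covariantly finite resolving--coresolving subcategory, being the right/left perpendicular category of a (co)tilting-like object. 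So the three adjectives \emph{functorially finite}, \emph{resolving}, \emph{coresolving} are immediate once we identify $\mathcal{X}$ with that perpendicular category.

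Concretely, I would argue as follows. \emph{Resolving}: $\mathcal{X}$ contains all projectives, is closed under extensions (a short exact sequence with two terms of finite projective dimension has its third term of finite projective dimension), and is closed under kernels of epimorphisms (if $0 \to A \to B \to C \to 0$ with $B, C \in \mathcal{X}$ then $\projdim A \leq \max(\projdim B, \projdim C - 1) < \infty$). \emph{Coresolving}: by the $\injdim$-characterisation in Proposition~\ref{locallyGorenstein}, $\mathcal{X}$ equals the modules of finite injective dimension, so the dual argument shows it contains all injectives and is closed under extensions and cokernels of monomorphisms. \emph{Functorially finite}: here I would invoke the general fact (Auslander--Reiten, \cite{AR91}, or Auslander--Buchweitz theory) that over an Iwanaga-Gorenstein algebra the pair $(\mathcal{GP}, \mathcal{X})$ — Gorenstein projectives and finite projective dimension modules — is a complete cotorsion / hereditary cotorsion pair, from which $\mathcal{X}$ being both covariantly and contravariantly finite follows: a right $\mathcal{X}$-approximation of $M$ is obtained from a Gorenstein-projective approximation $0 \to M' \to X \to M \to 0$ with $X$ Gorenstein projective of finite projective dimension, hence projective — more carefully, one uses the finite injective dimension of $T(\Lambda)$ to build approximations by truncating injective coresolutions. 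Once $\mathcal{X}$ is functorially finite, resolving and coresolving in $\modu T(\Lambda)$, the existence of almost split sequences in $\mathcal{X}$ is a theorem of Auslander--Smal\o{} on functorially finite subcategories closed under extensions; in fact resolving functorially finite subcategories always have Auslander--Reiten sequences, which gives the last claim.

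The main obstacle I anticipate is not any single deep step but rather pinning down the \emph{functorial finiteness} cleanly: one must produce right and left $\mathcal{X}$-approximations, and the natural way is via the Gorenstein-projective / finite-projective-dimension cotorsion pair, which requires knowing that over $T(\Lambda)$ (an $(n+1)$-Iwanaga-Gorenstein algebra) this cotorsion pair is complete. This is standard for Iwanaga-Gorenstein algebras, so I would simply cite it, but it is the one place where Proposition~\ref{locallyGorenstein} alone is not quite enough and one leans on the general Gorenstein machinery. Everything else — closure under kernels of epis and cokernels of monos, containment of projectives and injectives, and the passage from "functorially finite + resolving" to "has Auslander--Reiten sequences" — is formal and can be stated in a few lines with references to \cite{AR91} and the Auslander--Smal\o{} existence theorem.
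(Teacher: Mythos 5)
Your proposal is correct and follows the same route as the paper: reduce everything to the fact that $T(\Lambda)$ is $(n+1)$-Iwanaga--Gorenstein via Proposition~\ref{locallyGorenstein}, and then invoke the standard result that over such an algebra the modules of finite projective dimension form a functorially finite resolving and coresolving subcategory with Auslander--Reiten sequences (the paper simply cites \cite[Corollary 2.3.6]{Che10} for this, whereas you unpack its proof). The extra detail you supply on closure properties, the Auslander--Buchweitz approximations, and the Auslander--Smal\o{} existence theorem is accurate and consistent with the cited machinery.
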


\begin{proof}
This follows from the previous proposition, since for a general $(n+1)$-Iwanaga-Gorenstein algebra, the category of modules of finite projective dimension has the stated properties, see \cite[Corollary 2.3.6]{Che10}.
\end{proof}

Another important subclass of the category of modules for a Gorenstein algebra is the category of Gorenstein projective modules. 

\begin{defn}
Let $A$ be an algebra and $M$ be an $A$-module. A \emphbf{projective biresolution} 
\[P^\bullet\colon \cdots \to P^{n-1}\stackrel{d^{n-1}}{\to} P^n\stackrel{d^n}{\to} P^{n+1}\to \cdots\]
of $M$ is an exact sequence of projectives with $\ker d^0\cong M$.

A projective biresolution is called \emphbf{complete} if $\Hom_A(P^\bullet,A)$ is exact, or equivalently $\Hom_A(P^\bullet,Q)$ is projective for every projective $A$-module $Q$.

An $A$-module $M$ is called \emphbf{Gorenstein projective} if there exists a complete projective biresolution of $M$.
\end{defn}

The following result generalises \cite[Theorem 4.1]{LZ13} from the case $n=2$ and \cite[Theorem 2.4]{LY15b} from the context of generalised path algebras (under stronger  assumptions), see also \cite[Corollary 1.5]{XZ12} for the case that $n=2$ and $T(\Lambda)$ is Gorenstein. 

\begin{prop}
Let $\Lambda\colon Q\to \Algpro$ be a pro-species. Then, $X\in T(\Lambda)$ is Gorenstein projective if and only if $M_{\mathtt{i},\arrowin}$ is injective and $\Coker(X_{\mathtt{i},\arrowin})$ is Gorenstein projective over $\Lambda_\mathtt{i}$ for every $\mathtt{i}\in Q_0$.
\end{prop}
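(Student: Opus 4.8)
The plan is to unwind the characterisation of Gorenstein projective modules via complete projective biresolutions, using the explicit projective resolution of locally projective $\Lambda$-representations furnished by the Corollary after the bimodule resolution. The key observation is that the exact sequence
\[0\to \bigoplus_{\alpha\in Q_1} T(\Lambda)\varepsilon_{t(\alpha)}\otimes_{\Lambda_{t(\alpha)}}\Lambda_\alpha\otimes_{\Lambda_{s(\alpha)}} X_{s(\alpha)}\to \bigoplus_{\mathtt{i}\in Q_0} T(\Lambda)\varepsilon_\mathtt{i}\otimes_{\Lambda_\mathtt{i}} X_\mathtt{i}\to X\to 0\]
shows that the ``approximation by locally projective covers'' is controlled vertexwise by the maps $X_{\mathtt{i},\arrowin}\colon \bigoplus_{\alpha\colon \mathtt{j}\to\mathtt{i}}\Lambda_\alpha\otimes_{\Lambda_\mathtt{j}} X_\mathtt{j}\to X_\mathtt{i}$. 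First I would reduce to the locally projective case: one direction is immediate since a Gorenstein projective $T(\Lambda)$-module is in particular a summand of a module admitting a two-sided exact complex of projectives, and projectives of $T(\Lambda)$ are locally projective by Proposition \ref{tensoralgebralocallyprojective}(i), so any Gorenstein projective $X$ lies in $\rep_{l.p.}(\Lambda)$; moreover the syzygies in a complete biresolution remain locally projective. Once $X$ is locally projective, injectivity of the $X_{\mathtt{i},\arrowin}$ is exactly the statement that the leftmost term above is not just a summand of a projective but genuinely projective, i.e. that the displayed sequence is the start of a minimal-type projective resolution with a well-behaved first syzygy; here I would use that a map between projective $\Lambda_\mathtt{i}$-modules is injective iff its cokernel has projective dimension $\le 1$ over $\Lambda_\mathtt{i}$, but more to the point, one needs $X_{\mathtt{i},\arrowin}$ injective so that $\Coker(X_{\mathtt{i},\arrowin})$ is the ``vertex data'' of the syzygy $\Omega X$.

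Next I would set up the inductive/iterative structure. Writing $\Omega X$ for the first syzygy of $X$ (the leftmost term of the resolution is projective precisely when all $X_{\mathtt{i},\arrowin}$ are injective, using local projectivity and the argument of Proposition \ref{tensoralgebralocallyprojective}), I claim $(\Omega X)_\mathtt{i}$ can be read off and that $\Coker((\Omega X)_{\mathtt{i},\arrowin})$ relates to $\Omega_{\Lambda_\mathtt{i}}\Coker(X_{\mathtt{i},\arrowin})$ up to projective summands. This is the technical heart: one has to track the vertex components of the syzygy through the tensor-algebra resolution and identify them, vertex by vertex, with (projective stabilisations of) the $\Lambda_\mathtt{i}$-syzygy of $\Coker(X_{\mathtt{i},\arrowin})$. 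Granting this, Gorenstein projectivity of $X$ over $T(\Lambda)$ — which by definition requires a complete projective biresolution, and which for the ``left half'' (the coresolution) can be handled dually or via $D(T(\Lambda))$ and the bimodule resolution — becomes equivalent to: each $X_{\mathtt{i},\arrowin}$ is injective (so the one-sided resolution proceeds), the iterated cokernels $\Coker(X_{\mathtt{i},\arrowin})$ admit complete $\Lambda_\mathtt{i}$-resolutions (Gorenstein projectivity over $\Lambda_\mathtt{i}$), and the assembled complex $\Hom_{T(\Lambda)}(P^\bullet, T(\Lambda))$ is exact — which one checks using that $\Hom_{T(\Lambda)}(T(\Lambda)\varepsilon_\mathtt{i}\otimes_{\Lambda_\mathtt{i}} X_\mathtt{i}, T(\Lambda))\cong \Hom_{\Lambda_\mathtt{i}}(X_\mathtt{i}, \varepsilon_\mathtt{i}T(\Lambda))$ and that $\varepsilon_\mathtt{i}T(\Lambda)$ is $\Lambda_\mathtt{i}$-projective (again Proposition \ref{tensoralgebralocallyprojective}(i)), so exactness of the $\Hom$-complex over $T(\Lambda)$ reduces vertexwise to exactness of $\Hom_{\Lambda_\mathtt{i}}$ of the complete $\Lambda_\mathtt{i}$-biresolution of $\Coker(X_{\mathtt{i},\arrowin})$ against projectives.

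For the converse, given that each $X_{\mathtt{i},\arrowin}$ is injective and each $C_\mathtt{i} := \Coker(X_{\mathtt{i},\arrowin})$ is Gorenstein projective over $\Lambda_\mathtt{i}$, I would build a complete projective biresolution of $X$ over $T(\Lambda)$ by splicing: for the right half, iterate the one-step resolution of locally projective representations, noting that injectivity of $X_{\mathtt{i},\arrowin}$ guarantees the first syzygy is again locally projective with vertex data governed by $\Omega_{\Lambda_\mathtt{i}} C_\mathtt{i}$, and continue; for the left half, dualise, or directly use complete $\Lambda_\mathtt{i}$-resolutions of $C_\mathtt{i}$ to extend the $T(\Lambda)$-resolution backwards. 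Then exactness of $\Hom_{T(\Lambda)}(-, T(\Lambda))$ follows from the vertexwise reduction just described.

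The main obstacle I expect is the bookkeeping in identifying the vertex components of the $T(\Lambda)$-syzygy of a locally projective representation with the $\Lambda_\mathtt{i}$-syzygies of the cokernels $\Coker(X_{\mathtt{i},\arrowin})$ — i.e. making precise the claim that the tensor-algebra resolution ``restricts, vertexwise, up to projectives'' to the $\Lambda_\mathtt{i}$-resolutions — and doing so coherently enough to iterate it through a doubly infinite complex while keeping the $\Hom(-, T(\Lambda))$-exactness under control. A secondary subtlety is handling the non-locally-projective case cleanly at the outset, which I would dispatch by showing Gorenstein projective modules over $T(\Lambda)$ are automatically locally projective (summands of locally projective modules are locally projective since $\rep_{l.p.}(\Lambda)=\add\{\Lambda_\mathtt{i}\}$'s extension closure is closed under summands), so that $\Coker(X_{\mathtt{i},\arrowin})$ even makes sense as the relevant invariant.
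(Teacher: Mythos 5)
Your opening reduction is where the argument breaks: it is not true that a Gorenstein projective $T(\Lambda)$-module is locally projective. A Gorenstein projective module is a \emph{kernel} of a map between projectives, i.e.\ a submodule of a projective, and submodules of locally projective modules need not be locally projective (summands are, but you need closure under submodules, which fails). Concretely, take $Q=\mathbb{A}_2$, $\Lambda_\mathtt{1}=\Lambda_\mathtt{2}=\Lambda_\alpha=k[x]/(x^2)$; then $X$ with $X_\mathtt{1}=X_\mathtt{2}=k$ and $X_\alpha$ the identity satisfies the criterion of the proposition (both cokernels are Gorenstein projective over the selfinjective algebra $k[x]/(x^2)$), so it is Gorenstein projective, yet it is nowhere locally projective. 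Worse, in the locally selfinjective case Proposition \ref{locallyGorenstein} says locally projective modules have finite projective dimension, and a Gorenstein projective module of finite projective dimension is projective; so your reduction would collapse the proposition to ``Gorenstein projective iff projective'' and miss exactly the modules the statement is about. Relatedly, your reading of the role of injectivity of $X_{\mathtt{i},\arrowin}$ is off: for locally projective $X$ the leftmost term of the two-term resolution is projective regardless, so injectivity is not ``what makes the resolution projective.'' Its actual role is to produce the short exact sequence $0\to\bigoplus_\alpha\Lambda_\alpha\otimes X_{s(\alpha)}\to X_\mathtt{i}\to\Coker(X_{\mathtt{i},\arrowin})\to 0$ of $\Lambda_\mathtt{i}$-modules along which one splices complete biresolutions.

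Because the reduction fails, the machinery you invoke (the projective resolution $P_\bullet\otimes_{T(\Lambda)}X$ of locally projective representations, and syzygy-tracking through it) does not apply to the modules in question. The paper instead works directly with arbitrary $X$: it inducts along the acyclic quiver from the sources, building a complete projective biresolution $Q_\mathtt{i}^\bullet=\bigoplus_{p\colon t(p)=\mathtt{i}}\Lambda_p\otimes P_{s(p)}^\bullet$ of each $X_\mathtt{i}$ via the horseshoe lemma applied in the Frobenius exact category of Gorenstein projective $\Lambda_\mathtt{i}$-modules, assembles these with an upper-triangular differential into a biresolution of $X$, and checks $\Hom(-,T(\Lambda)\varepsilon_\mathtt{j})$-exactness by the same triangularity; the converse extracts the $P_\mathtt{i}^\bullet$ from the shape of projective $T(\Lambda)$-modules and obtains injectivity of $X_{\mathtt{i},\arrowin}$ as an \emph{output} of the snake lemma. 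Your vertexwise reduction of the $\Hom$-exactness check is in the right spirit and close to what the paper does, but the overall skeleton needs to be rebuilt without the locally projective assumption.
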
 

\begin{proof}
The proof follows the strategy given in \cite{LZ13} for $n=2$. Let $P_{\mathtt{i}}^{\bullet}$ be the complete projective biresolution with $\ker d^0_\mathtt{i}=\Coker(X_{\mathtt{i},\arrowin})$. We inductively define projective biresolutions 
\[Q_{\mathtt{i}}^\bullet=\bigoplus_{\substack{p\in Q_p\\t(p)=\mathtt{i}}}\Lambda_p\otimes_{\Lambda_{s(p)}} P_{s(p)}^\bullet\] of $X_{\mathtt{i}}$.
 For the start of the induction let $\mathtt{i}$ be a source in the quiver. Then, $X_\mathtt{i}=\Coker X_{\mathtt{i},\arrowin}$ and the statement holds by assumption. Next assume that $\mathtt{i}$ is a vertex such that $X_{\mathtt{j}}$ already was defined such a projective biresolution for all $\mathtt{j}$ such that there is a path $\mathtt{j}\to \mathtt{i}$. Since $X_{\mathtt{i},\arrowin}$ is a monomorphism, there is an exact sequence
\[0\to \bigoplus_{\substack{\alpha\in Q_1\\t(\alpha)=\mathtt{i}}} \Lambda_\alpha\otimes_{\Lambda_{s(\alpha)}} X_{s(\alpha)}\to X_\mathtt{i}\to \Coker(X_{\mathtt{i},\arrowin})\to 0\]
Using that $X_{s(\alpha)}$ and $\Coker(X_{\mathtt{i},\arrowin})$ have biresolutions $Q_{s(\alpha)}^\bullet$ and $P_{\mathtt{i}}^\bullet$, respectively, using a version of the horseshoe lemma and the fact that projectives are injective in the category of Gorenstein projective modules, claimed biresolution $Q_{\mathtt{i}}^\bullet$ also exists for $\mathtt{i}$, cf. \cite[Proof of Corollary 1.5]{LZ13}. Furthermore, such resolution can be chosen to have upper triangular differential with the differentials induced by those of the $P_{s(p)}^\bullet$ on the diagonal. 

Putting these biresolutions $Q_\mathtt{i}^\bullet$ together one obtains a biresolution $Q^\bullet$ of $X$. Here, for $\alpha\colon \mathtt{i}\to \mathtt{j}$, the corresponding map $\Lambda_\alpha\otimes_{\Lambda_\mathtt{i}} Q_\mathtt{i}^\bullet\to Q_\mathtt{j}^\bullet$ is just the direct sum of the identities on $\Lambda_\alpha\otimes_{\Lambda_{\mathtt{i}}} \Lambda_p\otimes P_{s(p)}^{\bullet}$. 

We need to check that $\Hom_\Lambda(Q^\bullet,T(\Lambda)\varepsilon_\mathtt{i})$ is again exact for every $\mathtt{i}\in Q_0$. Considering the diagram
\[
\begin{tikzcd}
\Lambda_\alpha \otimes \Lambda_p\otimes P_{s(p)}\arrow{r}{\Lambda_\alpha\otimes \varphi_{p,p'}}\arrow{d}{\id} &\Lambda_\alpha\otimes \Lambda_{p'}\otimes \Lambda_{s(p')}\arrow{d}\\
\Lambda_{\alpha p}\otimes P_{s(p)}\arrow{r}{\varphi_{\alpha p, q'}} &\Lambda_{q'} \otimes \Lambda_{s(q')}
\end{tikzcd}
\]
where $\varphi\in \Hom_\Lambda(Q^\bullet, T(\Lambda)\varepsilon_\mathtt{i})$ we see that $\varphi_{\alpha p,q'}=\begin{cases}\Lambda_\alpha\otimes \varphi_{p,p'}&\text{if $q'=\alpha p'$}\\0&\text{else.}\end{cases}$

Thus 
\[\Hom_\Lambda(Q^\bullet,T(\Lambda)\varepsilon_{\mathtt{i}})=\bigoplus_j \bigoplus_{\substack{p\in Q_p\\p\colon \mathtt{i}\to \mathtt{j}}} \Hom_{\Lambda_{\mathtt{j}}}(P^\bullet_{\mathtt{j}},\Lambda_p\otimes \Lambda_{\mathtt{i}})\]
as $\mathbb{Z}$-graded vector spaces with upper triangular differential (coming from the upper triangular differential of $Q^\bullet_{\mathtt{i}}$). As the $P_{\mathtt{j}}^\bullet$ are complete projective resolutions and $\Lambda_p\otimes \Lambda_{\mathtt{i}}$ is projective, it follows  $\Hom_{\Lambda_{\mathtt{j}}}(P_{\mathtt{j}},\Lambda_p\otimes  \Lambda_{\mathtt{i}})$ is exact for every $\mathtt{i}$. Using induction on $|Q_0|$, applying homology it follows that also $\Hom_{\Lambda}(Q^\bullet, T(\Lambda)\varepsilon_{\mathtt{i}})$ is exact for every $\mathtt{i}\in Q_0$. Thus, $Q^\bullet$ is a complete projective biresolution. The claim follows.

For the other direction, let $Q^\bullet$ be a complete projective biresolution of $X$. By the form of the projective $T(\Lambda)$-modules, this gives projective biresolutions $Q_{\mathtt{i}}^\bullet$ of $X_\mathtt{i}$ and furthermore $Q_\mathtt{i}^\bullet\cong \bigoplus_{\substack{p\in Q_p\\t(p)=\mathtt{i}}} \Lambda_p\otimes P_{s(p)}^\bullet$. 
Consider the differential $d$ on the complex $Q$. From the fact that $d$ is a $T(\Lambda)$-module homomorphism we get that the following diagram commutes (for every arrow $\alpha$, every $j\in \mathbb{Z}$ and every paths $p,p',q'$ in the quiver with $t(p)=t(p')$ and $t(\alpha)=t(q')$ :
\[
\begin{tikzcd}
\Lambda_\alpha\otimes \Lambda_p\otimes P_{s(p)}^j\arrow{d}{\id}\arrow{r}{\Lambda_\alpha\otimes d_{p,p'}^j} &\Lambda_\alpha \otimes \Lambda_{p'}\otimes P_{s(p')}^{j-1}\arrow{d}\\
\Lambda_{\alpha p}\otimes P_{s(p)}^{j}\arrow{r}{d_{\alpha p,q'}^j} &\Lambda_{q'}\otimes P_{s(q')}^{j-1}
\end{tikzcd}
\]
where the right vertical arrow is $\id$ or $0$ when $q'=\alpha p'$ or $q'\neq \alpha p'$ respectively. Thus, the differential $d$ can be assumed to be upper triangular and by induction $P^\bullet_{\mathtt{i}}$ is a complex for all $\mathtt{i}$. 
 
From applying homology to the sequence,
\[0\to \bigoplus_{\substack{p\in Q_p^+\\t(p)=\mathtt{i}}} \Lambda_p\otimes P_{s(p)}^\bullet\to Q_{\mathtt{i}}^\bullet\to P_{\mathtt{i}}^\bullet\to 0\]
it follows by induction that each of the complexes $P_\mathtt{i}^\bullet$ is in fact exact.

The claim follows by applying the snake lemma to the following diagram:
\[
\begin{tikzcd}
0\arrow{r}&\bigoplus_{\substack{p\in Q_p^+\\t(p)=\mathtt{i}}}\Lambda_p\otimes X_{s(p)}\arrow{r}\arrow{d} &\bigoplus_{\substack{p\in Q_p^+\\t(p)=\mathtt{i}}} \Lambda_p\otimes P_{s(\alpha)}^0\arrow{d}\arrow{r}&\dots\\
0\arrow{r}&X_\mathtt{i}\arrow{r}&P_\mathtt{i}\oplus \bigoplus_{\substack{p\in Q_p^+\\t(p)=\mathtt{i}}} \Lambda_p\otimes P_{s(\alpha)}^0\arrow{r}&\dots
\end{tikzcd}
\] 
as $\Coker(X_{\mathtt{i},\arrowin})=\Coker(\bigoplus_{\substack{p\in Q_p\\t(p)=\mathtt{i}}} \Lambda_p\otimes X_{s(p)}\to X_{\mathtt{i}})$ as all other components factor through some $X_\alpha$ for $\alpha\in Q_1$. 
\end{proof}

The following lemma shows that under appropriate conditions $T(\Lambda)$ deserves to be called ``hereditary''. 

\begin{lem}\label{tensoralgebrahereditary}
Let $\Lambda\colon Q\to \Algpro$ be a locally selfinjective pro-species with $Q$ not necessarily acyclic. Then, the following properties hold:
\begin{enumerate}[(i)]
\item[(i)] Every locally projective submodule of a projective module is projective.
\item Let $\rep_{l.p.}^{np}(\Lambda)$ be the subcategory of representations of $\Lambda$ without projective direct summands. Then the natural functor $\rep_{l.p.}^{np}(\Lambda)\to \underline{\rep}_{l.p.}(\Lambda)$ is an equivalence of categories.
\end{enumerate}
\end{lem}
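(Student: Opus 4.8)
The plan is to establish (i) directly and then obtain (ii) by transporting the classical proof that, over a hereditary algebra, the full subcategory of modules without projective direct summands is equivalent to the stable category; part (i) will play the role of the statement ``a submodule of a projective module is projective''.

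For (i), let $M\subseteq P$ with $P$ a projective $T(\Lambda)$-module and $M$ locally projective. I would first note that $P$ is itself locally projective: $T(\Lambda)$ is locally projective by the first part of Proposition~\ref{tensoralgebralocallyprojective}, and local projectivity is inherited by direct sums and direct summands. Since each $\Lambda_\mathtt{i}$ is selfinjective, $P_\mathtt{i}$ is injective over $\Lambda_\mathtt{i}$, and $M_\mathtt{i}$, being projective over the selfinjective algebra $\Lambda_\mathtt{i}$, is also injective over $\Lambda_\mathtt{i}$; hence the inclusion $M_\mathtt{i}\subseteq P_\mathtt{i}$ splits and $(P/M)_\mathtt{i}=P_\mathtt{i}/M_\mathtt{i}$ is a direct summand of $P_\mathtt{i}$, in particular projective over $\Lambda_\mathtt{i}$. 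Therefore $P/M\in\rep_{l.p.}(\Lambda)$, so by the explicit length-one projective resolution of locally projective modules constructed above, $\projdim_{T(\Lambda)}(P/M)\le 1$. Applying $\Ext^{\bullet}_{T(\Lambda)}(-,X)$ to $0\to M\to P\to P/M\to 0$ and using $\Ext^{i}_{T(\Lambda)}(P,X)=0$ for $i\geq 1$ yields $\Ext^{1}_{T(\Lambda)}(M,X)\cong\Ext^{2}_{T(\Lambda)}(P/M,X)=0$ for all $X$, so $M$ is projective.

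For (ii), the category $\rep_{l.p.}(\Lambda)$ is Krull--Schmidt, so every object decomposes as $M^{np}\oplus(\text{projective})$ with $M^{np}\in\rep_{l.p.}^{np}(\Lambda)$, whence $M\cong M^{np}$ in $\underline{\rep}_{l.p.}(\Lambda)$; this gives essential surjectivity. Fullness is clear since the functor is the restriction of the canonical projection onto the stable category. The real content is faithfulness: a morphism $f\colon M\to N$ between objects of $\rep_{l.p.}^{np}(\Lambda)$ that factors through a projective module must vanish. Given a factorisation $M\xrightarrow{a}P\xrightarrow{b}N$ with $P$ projective, I would look at $\im(a)\subseteq P$; if $\im(a)$ is locally projective, then (i) makes it projective, the surjection $M\twoheadrightarrow\im(a)$ splits, so $\im(a)$ is a direct summand of $M$, and since $M$ has no projective summand, $\im(a)=0$ and $f=0$.

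The hard part will be showing that $\im(a)$ is locally projective, i.e.\ that for each vertex $\mathtt{i}$ the image of the $\Lambda_\mathtt{i}$-linear map $a_\mathtt{i}\colon M_\mathtt{i}\to P_\mathtt{i}$ between projective $\Lambda_\mathtt{i}$-modules is again projective over $\Lambda_\mathtt{i}$; this need not hold for an arbitrary map over a selfinjective algebra, so the factorisation has to be chosen well — e.g.\ replacing $P$ by a projective cover of $N$ and analysing the resulting syzygy, together with the hypothesis that both $M$ and $N$ have no projective summands — before one can bring selfinjectivity of the $\Lambda_\mathtt{i}$ to bear. In the genuinely hereditary situation every $\Lambda_\mathtt{i}$ is a division ring and every $\Lambda_\mathtt{i}$-module is projective, so this obstacle is absent. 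The ancillary facts — that $\rep_{l.p.}(\Lambda)$ is Krull--Schmidt, that local projectivity of $P$ passes to its summands, and the $\Ext$-sequence bookkeeping — are routine and I would dispatch them quickly.
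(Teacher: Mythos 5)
Your part (i) is correct and is in substance the paper's own argument: local selfinjectivity splits the inclusion vertexwise, the quotient is locally projective and hence of projective dimension at most one (the paper quotes Proposition \ref{locallyGorenstein} with $n=0$ where you use the explicit standard resolution; this is the same fact), and the submodule is then projective as a first syzygy. Your outline of density and fullness in (ii) is also fine.

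The genuine gap is exactly the one you flag yourself in (ii): that the image of $a\colon M\to P$ is locally projective. This is not a point that a better choice of factorisation (projective cover of $N$, syzygy analysis, etc.) can repair. Take $Q=\mathtt{1}\xrightarrow{\alpha}\mathtt{2}$, $\Lambda_\mathtt{1}=\Lambda_\mathtt{2}=A:=\mathbbm{k}[x]/(x^2)$, $\Lambda_\alpha=A$, so that $T(\Lambda)$-modules are $A$-linear maps $M_\mathtt{1}\to M_\mathtt{2}$ and the indecomposable projectives are $P_\mathtt{1}=(A\xrightarrow{\id}A)$ and $P_\mathtt{2}=(0\to A)$. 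The module $M=(A\xrightarrow{x\cdot}A)$ is locally projective, indecomposable and not projective, yet the nonzero endomorphism $g=(x\cdot,0)$ of $M$ factors through $P_\mathtt{1}\oplus P_\mathtt{2}$: with $s^{(1)}=(x\cdot,\id)\colon M\to P_\mathtt{1}$, $t^{(1)}=(\id,x\cdot)\colon P_\mathtt{1}\to M$, $s^{(2)}=(0,-x\cdot)\colon M\to P_\mathtt{2}$, $t^{(2)}=(0,\id)\colon P_\mathtt{2}\to M$ one checks $t^{(1)}s^{(1)}+t^{(2)}s^{(2)}=(x\cdot,0)=g$. Consequently no factorisation of $g$ through a projective can have locally projective image (otherwise your splitting argument would force $g=0$), and in fact the functor $\rep_{l.p.}^{np}(\Lambda)\to\underline{\rep}_{l.p.}(\Lambda)$ fails to be faithful for this locally selfinjective $\Lambda$. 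Note that the paper's own proof of (ii) is silent on precisely this step: it applies part (i) to $\image s$ without verifying local projectivity, so your instinct about where the difficulty lies is exactly right, but the conclusion in this generality needs additional hypotheses or a reformulation rather than a cleverer argument (and since the example is itself of separated, bipartite type, the issue is not avoided in the situation where the lemma is later applied).
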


\begin{proof}
\begin{enumerate}[(i)]
\item Let $P$ be projective and let $E\subseteq P$ be a submodule. Since $E_\mathtt{i}\subseteq P_{\mathtt{i}}$ and $\Lambda$ is locally selfinjective, the inclusion splits and $E$ and $P/E$ are locally projective. Consider the short exact sequence $0\to E\to P\to P/E\to 0$. As $P/E$ is locally projective, it follows from Proposition \ref{locallyGorenstein} that $\projdim P/E\leq 1$. Thus, \cite[Proposition A.4.7]{ASS06} implies that $E$ is projective.
\item Let $g\colon M\to N$ factor through a projective object $P$, i.e. $g=ts$ with $s\colon M\to P$ and $t\colon P\to N$. By the first part, it follows that $\image s\subseteq P$ is projective. It follows that $\image s$ is a direct summand of $M$. Thus, $\image s=0$ and hence, $g=0$. The claim follows.\qedhere
\end{enumerate}
\end{proof}

\section{The preprojective algebra}
\label{sec:preprojectivealgebra}

In this section, we introduce the notion of the preprojective algebra of a pro-species generalising \cite{DR80, GLS16} and establish some of its basic properties. For this level of generality we need 
the classical existence result of a dual basis of a projective module, cf. \cite[Lemma 2.4]{LY15}. The following lemma is well known, it can e.g. be found in lecture notes on Advanced Algebra by Pareigis. Since these are not published, we include a proof for the convenience of the reader.

\begin{lem}[Dual basis lemma]\label{dualbasislemma}
Let $P_R$ be a right $R$-module. Then, the following statements are equivalent:
\begin{enumerate}[(1)]
\item\label{dualbasis:i} $P$ is finitely generated and projective.
\item\label{dualbasis:ii} There are $x_1,\dots, x_m\in P$, $f_1,\dots,f_m\in \Hom_{R}(P,R)$ such that for each $x\in P$ we have $x=\sum_{i=1}^m  x_if_i(x)$
\item\label{dualbasis:iii} The dual basis homomorphism $\mathbbm{d}\colon P\otimes_R \Hom_R(P,R)\to \End_{R}(P), p\otimes f\mapsto (q\mapsto pf(q))$ is an isomorphism.
\end{enumerate}
\end{lem}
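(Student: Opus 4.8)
The plan is to prove the cycle of implications $(1)\Rightarrow(3)\Rightarrow(2)\Rightarrow(1)$; this keeps each step short and avoids any genuine obstacle. First I would record the two easy directions. For $(3)\Rightarrow(2)$: suppose $\mathbbm{d}\colon P\otimes_R\Hom_R(P,R)\to\End_R(P)$ is an isomorphism. Then the identity $\id_P$ has a preimage, which we may write (after collecting terms) as a finite sum $\sum_{i=1}^m x_i\otimes f_i$. Applying $\mathbbm{d}$ and evaluating at $x\in P$ gives $x=\id_P(x)=\sum_{i=1}^m x_i f_i(x)$, which is exactly $(2)$. For $(2)\Rightarrow(1)$: given $x_1,\dots,x_m\in P$ and $f_1,\dots,f_m\in\Hom_R(P,R)$ with $x=\sum_i x_i f_i(x)$ for all $x$, define $\pi\colon R^m\to P$ by $\pi(e_i)=x_i$ and $\iota\colon P\to R^m$ by $\iota(x)=(f_1(x),\dots,f_m(x))$, both right $R$-linear. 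Then $\pi\circ\iota=\id_P$, so $P$ is a direct summand of the free module $R^m$, hence finitely generated and projective.

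The one direction that needs a small argument is $(1)\Rightarrow(3)$. First I would observe that $\mathbbm{d}$ is always well defined and $R$-linear (here $\End_R(P)$ and $\Hom_R(P,R)$ carry their natural bimodule structures, and one checks $\mathbbm{d}$ is a homomorphism of the appropriate modules). Next, $\mathbbm{d}$ is clearly an isomorphism when $P=R$ — in that case both sides are canonically $R$ — and both the source and target of $\mathbbm{d}$ commute with finite direct sums in $P$, and the isomorphism $\mathbbm{d}$ is compatible with these decompositions. Hence $\mathbbm{d}$ is an isomorphism for $P=R^n$ free of finite rank. Finally, if $P$ is finitely generated projective, pick $P'$ with $P\oplus P'\cong R^n$; functoriality of $\mathbbm{d}$ in $P$ and compatibility with the direct sum decomposition $R^n\cong P\oplus P'$ exhibit $\mathbbm{d}_P$ as a direct summand (a retract) of the isomorphism $\mathbbm{d}_{R^n}$, and a retract of an isomorphism which is itself a two-sided inverse on the summand is an isomorphism. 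Concretely, the diagonal components of $\mathbbm{d}_{R^n}$ under $R^n\cong P\oplus P'$ are $\mathbbm{d}_P$ and $\mathbbm{d}_{P'}$ together with off-diagonal maps built from $\Hom_R(P,P')$ etc., and inverting the whole block matrix forces each diagonal block to be invertible.

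I do not expect a serious obstacle: all three implications are standard module-theoretic manipulations. The only point requiring mild care is bookkeeping of bimodule structures in $(1)\Rightarrow(3)$ — making sure $\mathbbm{d}$ is a morphism in the right category and that ``direct summand of an isomorphism'' is applied correctly — but this is routine. An alternative, slightly slicker route for $(1)\Rightarrow(3)$ is to produce the explicit inverse: given a dual basis $(x_i,f_i)$ from $(2)$ (which $(1)$ yields by the $(1)\Rightarrow(2)$ argument implicit above), define $\End_R(P)\to P\otimes_R\Hom_R(P,R)$ by $\varphi\mapsto\sum_i \varphi(x_i)\otimes f_i$ and check directly that this is a two-sided inverse to $\mathbbm{d}$; this is a short computation using $x=\sum_i x_i f_i(x)$ twice. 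I would likely present whichever of these two is cleaner to typeset.
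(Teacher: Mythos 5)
Your proposal is correct, but it is organised differently from the paper's proof and in one direction uses a genuinely different argument. The paper does not prove the equivalence of (1) and (2) at all: it cites it (to Faith's book) and only supplies the two computations $(2)\Rightarrow(3)$ and $(3)\Rightarrow(2)$. For $(2)\Rightarrow(3)$ the paper writes down the explicit inverse $\mathbbm{b}(e)=\sum_i e(x_i)\otimes f_i$ and verifies $\mathbbm{d}\mathbbm{b}=\id$ and $\mathbbm{b}\mathbbm{d}=\id$ directly using the dual basis identity — this is exactly the ``alternative, slicker route'' you mention at the end, so you have the paper's argument available as a fallback. Your primary route for $(1)\Rightarrow(3)$ instead reduces to $P=R$ via naturality and compatibility of $\mathbbm{d}$ with finite direct sums; this is sound (note that $\mathbbm{d}_{P\oplus P'}$ is in fact the direct sum of the four block maps $P\otimes\Hom_R(P,R)\to\Hom_R(P,P)$, $P\otimes\Hom_R(P',R)\to\Hom_R(P',P)$, etc., so invertibility of the whole forces invertibility of each block — your ``retract of an isomorphism'' phrasing is doing no more than this). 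What your approach buys is self-containedness: you prove $(2)\Rightarrow(1)$ via the splitting of $R^m\to P$ rather than citing it, and the free-module reduction is conceptually transparent; what the paper's approach buys is brevity and, more importantly for its purposes, the explicit formula for $\mathbbm{b}$, which is reused immediately afterwards (Corollary \ref{uniquenesscasimirelement} and the definition of the Casimir element depend on knowing $\mathbbm{d}^{-1}(\id_P)=\sum_i x_i\otimes f_i$ concretely). If you write this up, I would recommend including the explicit inverse in any case, since the surrounding text needs it.
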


\begin{proof}
The equivalence of \eqref{dualbasis:i} and \eqref{dualbasis:ii} is proved e.g. in \cite[I.3.23]{Fai81}. For \eqref{dualbasis:ii}$\Rightarrow$\eqref{dualbasis:iii} let $\mathbbm{b}\colon \End_R(P)\to P\otimes_R \Hom_R(P,R)$ be defined by $\mathbbm{b}(e):=e(x_i)\otimes_R f_i$. We claim that $\mathbbm{b}$ is inverse to $\mathbbm{d}$. Indeed, $\mathbbm{d}\mathbbm{b}(e)(p)=\sum_i\mathbbm{d}(e(x_i)\otimes_R f_i)(p)=\sum_{i}e(x_i)f_i(p)=e(\sum_i x_if_i(p))=e(p)$ and $\mathbbm{b}\mathbbm{d}(p\otimes f)=\sum_i \mathbbm{d}(p\otimes f)(x_i)\otimes_R f_i=\sum_i pf(x_i)\otimes_R f_i=p\otimes_R \sum_i f(x_i)f_i=p\otimes f$ since $\sum_i f(x_i)f_i(q)=f(x_if_i(q))=f(q)$.

On the other hand, \eqref{dualbasis:iii}$\Rightarrow$\eqref{dualbasis:ii} follows since writing $\mathbbm{d}^{-1}(\id_P)=\sum_i x_i\otimes f_i$ the elements $x_i, f_i$ satisfy the stated condition as $\sum_i x_if_i(x)=\mathbbm{d}(x_i\otimes f_i)(x)=\mathbbm{d}\mathbbm{d}^{-1}(\id_P)(x)=x$.
\end{proof} 

\begin{cor}\label{uniquenesscasimirelement}
Let $P_R$ be a finitely generated projective right $R$-module. The element $\sum_{i=1}^m x_i\otimes f_i\in P\otimes_R \Hom_R(P,R)$ does not depend on the choice of $x_i, f_i$ satisfying \eqref{dualbasis:ii} in Lemma \ref{dualbasislemma}.
\end{cor}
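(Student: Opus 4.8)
The plan is to leverage the dual basis lemma, specifically the isomorphism $\mathbbm{d}\colon P\otimes_R \Hom_R(P,R)\to \End_R(P)$ from part \eqref{dualbasis:iii}, which is canonical in the sense that its construction involves no choices. The element $\sum_i x_i\otimes f_i$ obtained from any family $(x_i, f_i)$ satisfying \eqref{dualbasis:ii} is, by the computation at the end of the proof of Lemma \ref{dualbasislemma}, exactly a preimage of $\id_P$ under $\mathbbm{d}$; since $\mathbbm{d}$ is a bijection, that preimage is unique. So the whole statement reduces to the observation that $\mathbbm{d}\bigl(\sum_i x_i\otimes f_i\bigr)=\id_P$.

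Concretely, I would argue as follows. Let $(x_1,\dots,x_m,f_1,\dots,f_m)$ and $(y_1,\dots,y_n,g_1,\dots,g_n)$ be two families in $P$ and $\Hom_R(P,R)$ each satisfying condition \eqref{dualbasis:ii}. Write $u:=\sum_{i=1}^m x_i\otimes_R f_i$ and $v:=\sum_{j=1}^n y_j\otimes_R g_j$ in $P\otimes_R\Hom_R(P,R)$. For every $q\in P$ we have $\mathbbm{d}(u)(q)=\sum_i x_i f_i(q)=q$, using \eqref{dualbasis:ii} for the first family, and likewise $\mathbbm{d}(v)(q)=q$ using \eqref{dualbasis:ii} for the second. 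Hence $\mathbbm{d}(u)=\id_P=\mathbbm{d}(v)$ in $\End_R(P)$. Since $P$ is finitely generated projective, Lemma \ref{dualbasislemma} (in particular the equivalence with \eqref{dualbasis:iii}) tells us $\mathbbm{d}$ is an isomorphism, so injective, and therefore $u=v$. This proves independence of the choice.

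There is essentially no obstacle here: the only thing to be careful about is making sure the statement is phrased relative to a fixed finitely generated projective $P$ over a fixed ring $R$ (so that $\mathbbm{d}$ is a well-defined fixed map), which the corollary already does. One could alternatively give the bare-hands computation — writing $u = \sum_i x_i\otimes f_i = \sum_i \bigl(\sum_j y_j g_j(x_i)\bigr)\otimes f_i = \sum_j y_j\otimes\bigl(\sum_i g_j(x_i) f_i\bigr) = \sum_j y_j\otimes g_j = v$, where the last equality uses $\sum_i g_j(x_i)f_i(q) = g_j(\sum_i x_i f_i(q)) = g_j(q)$ — but routing through the isomorphism $\mathbbm{d}$ is cleaner and makes transparent that this common value is the canonical ``Casimir element'' referenced in the corollary's label. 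Either way, the proof is a couple of lines.
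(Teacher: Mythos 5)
Your proof is correct and follows essentially the same route as the paper: both identify $\sum_i x_i\otimes f_i$ as the preimage of $\id_P$ under the dual basis isomorphism $\mathbbm{d}$ and conclude uniqueness from injectivity of $\mathbbm{d}$ via part \eqref{dualbasis:iii} of Lemma \ref{dualbasislemma}. The extra bare-hands computation you sketch is a fine alternative but not needed.
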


\begin{proof}
Any elements $x_i, f_i$ satisfying \eqref{dualbasis:ii} in Lemma \ref{dualbasislemma} give $\mathbbm{d}\mathbbm{b}(\sum_{i=1}^m f_i\otimes x_i)=\id_P$. Hence, by \eqref{dualbasis:iii} in Lemma \ref{dualbasislemma} they have to coincide.
\end{proof}

\begin{defn}
Let $P$ be a finitely generated projective left $R$-module and $x_i, f_i$ as in \eqref{dualbasis:ii} of Lemma \ref{dualbasislemma}. Then, the element $c:=\sum_{i=1}^m  x_i\otimes f_i\in  P\otimes \Hom_R(P,R)$ is called the \emphbf{Casimir element}.
\end{defn}

Although not strictly necessary for defining the preprojective algebra of a pro-species, the following additional property is needed for it to enjoy some of the usual properties, e.g. of the preprojective algebra being independent of the orientation. It appeared already in \cite[Section 5.1]{GLS16} and \cite[Definition 1.1]{LY15} without being given a name.

\begin{defn}
A pro-species of algebras $\Lambda\colon Q\to \Algpro$ is called \emphbf{dualisable} if 
\[\Hom_{\Lambda_\mathtt{i}^{op}}(\Lambda_\alpha,\Lambda_\mathtt{i})\cong \Hom_{\Lambda_\mathtt{j}}(\Lambda_{\alpha},\Lambda_\mathtt{j})\]
 as $\Lambda_\mathtt{i}$-$\Lambda_\mathtt{j}$-bimodules for each $\alpha\colon \mathtt{i}\to \mathtt{j}\in Q_1$.
\end{defn}

\begin{defn}\label{preprojectivealgebra}
Let $\Lambda\colon Q\to \Algpro$ be a dualisable pro-species of algebras. 
\begin{enumerate}[(i)]
\item The \emphbf{double quiver} $\overline{Q}$ of $Q$  is the quiver with $\overline{Q}_0=Q_0$ and $\overline{Q}_1=Q_1\cup Q_1^*$ where $Q_1^*:=\{\alpha^*\colon \mathtt{j}\to \mathtt{i}| \alpha\colon \mathtt{i}\to \mathtt{j} \in Q_1\}$.
\item Let $\overline{\Lambda}\colon \overline{Q}\to \Algpro$ be the pro-species of algebras with $\overline{\Lambda}_\mathtt{i}:=\Lambda_\mathtt{i}$ for $\mathtt{i}\in Q_0$ and $\overline{\Lambda}_\alpha=\Lambda_\alpha$ for $\alpha\in Q_1$ and $\overline{\Lambda}_{\alpha^*}:=\Hom_{\Lambda_{s(\alpha)}^{op}}(\Lambda_\alpha, \Lambda_{s(\alpha)})$. 
\item For each $\alpha\in \overline{Q}_1$ let $c_\alpha$ be the Casimir element of $\Lambda_\alpha\otimes \Lambda_{\alpha^*}$. 
Define $c=\sum_{\alpha\in \overline{Q}_1}\sgn(\alpha)c_\alpha$, where $\sgn(\alpha)=\begin{cases}1&\text{if $\alpha\in Q_1$}\\-1 &\text{else}\end{cases}$. 
The algebra $\Pi(\Lambda):=T(\overline{\Lambda})/\langle c\rangle$ is called ``the'' \emphbf{preprojective algebra} of $\mathcal{M}$.
\end{enumerate}
\end{defn} 

\begin{ex}
Let $Q=\mathbb{A}_2$ and $\Lambda\colon Q\to \Algpro$ be a dualisable pro-species of algebras. Then, $\Pi(\Lambda)$ is isomorphic to the Morita ring $\Lambda_{(0,0)}$ as studied by Green and Psaroudakis in \cite{GP14}. 
\end{ex}

\begin{rmk}Note that for some of the purposes the signs $\operatorname{sgn}(\alpha)$ could be omitted. In particular, if the underlying unoriented graph is a tree, then $\Pi(\Lambda)$ does not depend on the signs up to isomorphism. In applications it seems that these signs are more ``natural'' than other choices, see \cite[Section 6]{Ri98}.
\end{rmk}

The following proposition generalises \cite[Lemma 1.1]{DR80}

\begin{lem}
Let $\Lambda$ be a dualisable pro-species of algebras. Then, $\Pi(\Lambda)$ does not depend on the choice of the dual bases of the $\Lambda_\alpha$ and $\Lambda_{\alpha^*}$. 
\end{lem}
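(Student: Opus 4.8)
The statement to prove is that the preprojective algebra $\Pi(\Lambda) = T(\overline{\Lambda})/\langle c\rangle$ is, up to isomorphism, independent of the choices of dual bases for the bimodules $\Lambda_\alpha$ and $\Lambda_{\alpha^*}$. The crucial input is already essentially available: by Corollary \ref{uniquenesscasimirelement}, for a fixed finitely generated projective right module $P_R$, the Casimir element $\sum_i x_i \otimes f_i \in P \otimes_R \Hom_R(P,R)$ does not depend on the choice of $x_i, f_i$ satisfying condition \eqref{dualbasis:ii} of Lemma \ref{dualbasislemma}. The plan is therefore to reduce ``independence of the dual basis of $\Lambda_\alpha$ and $\Lambda_{\alpha^*}$'' to this uniqueness statement, handling the mild subtlety that $\Lambda_\alpha$ enters the construction through two different module structures (as a right $\Lambda_{s(\alpha)}$-module when forming $c_\alpha$, and via $\Lambda_{\alpha^*} = \Hom_{\Lambda_{s(\alpha)}^{op}}(\Lambda_\alpha,\Lambda_{s(\alpha)})$ which is where the dualisability hypothesis is used).

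\textbf{Key steps.} First I would fix an arrow $\alpha\colon \mathtt{i}\to\mathtt{j}$ and unwind Definition \ref{preprojectivealgebra}(iii): the summand $c_\alpha$ of $c$ is the Casimir element of $\Lambda_\alpha \otimes_{\Lambda_\mathtt{j}} \Lambda_{\alpha^*}$, where $\Lambda_\alpha$ is regarded as a finitely generated projective \emph{right} $\Lambda_\mathtt{j}$-module and $\Lambda_{\alpha^*} = \Hom_{\Lambda_\mathtt{j}}(\Lambda_\alpha, \Lambda_\mathtt{j})$ is its dual (here using dualisability to identify $\Hom_{\Lambda_\mathtt{i}^{op}}(\Lambda_\alpha,\Lambda_\mathtt{i})$ with $\Hom_{\Lambda_\mathtt{j}}(\Lambda_\alpha,\Lambda_\mathtt{j})$ as bimodules, consistent with Definition \ref{preprojectivealgebra}(ii)). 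Second, I would observe that for $\alpha^* \in Q_1^*$ the element $c_{\alpha^*}$ is likewise the Casimir element of $\Lambda_{\alpha^*}\otimes \Lambda_\alpha$ with $\Lambda_{\alpha^*}$ a finitely generated projective right $\Lambda_\mathtt{i}$-module; by the reflexivity of finitely generated projective modules, the canonical identification $\Lambda_\alpha \cong \Hom_{\Lambda_\mathtt{i}^{op}}(\Lambda_{\alpha^*},\Lambda_\mathtt{i})$ shows that the data defining $c_{\alpha^*}$ is determined by the data defining $c_\alpha$ — there is really only one choice of ``dual basis pair'' $\{x_i, f_i\}$ for each unoriented edge, and choosing it for $\Lambda_\alpha$ simultaneously pins down the one for $\Lambda_{\alpha^*}$. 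Third, by Corollary \ref{uniquenesscasimirelement} applied to each $P = \Lambda_\alpha$ (resp. $P = \Lambda_{\alpha^*}$), the element $c_\alpha \in \Lambda_\alpha\otimes_{\Lambda_\mathtt{j}}\Lambda_{\alpha^*} \subseteq T(\overline{\Lambda})$ is \emph{literally the same element} for any two choices of dual bases, hence so is $c = \sum_{\alpha\in\overline{Q}_1}\sgn(\alpha)c_\alpha$, and therefore the two-sided ideal $\langle c\rangle$ and the quotient $T(\overline{\Lambda})/\langle c\rangle$ are literally unchanged. This gives not merely an isomorphism but equality.

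\textbf{Expected main obstacle.} The only genuine care needed is bookkeeping around which module structure on $\Lambda_\alpha$ is in play and making sure the ``dual basis of $\Lambda_\alpha$'' and ``dual basis of $\Lambda_{\alpha^*}$'' are not independent choices but are forced to be mutually dual — i.e. that a choice of $x_i \in \Lambda_\alpha$, $f_i \in \Lambda_{\alpha^*} = \Hom_{\Lambda_\mathtt{j}}(\Lambda_\alpha,\Lambda_\mathtt{j})$ with $x = \sum_i x_i f_i(x)$ automatically also witnesses the dual-basis property for $\Lambda_{\alpha^*}$ in the form $f = \sum_i f_i\, \widehat{x_i}(f)$ under the reflexivity isomorphism. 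This is exactly the symmetric content of the Dual Basis Lemma \ref{dualbasislemma}(iii)/\eqref{dualbasis:ii} and of Corollary \ref{uniquenesscasimirelement}, so once the identifications are set up correctly the proof is essentially a one-line invocation. I would also remark (or check) that if one instead makes \emph{unrelated} choices of dual bases for $\Lambda_\alpha$ and for $\Lambda_{\alpha^*}$ — e.g. because in practice one might present $\Lambda_{\alpha^*}$ abstractly rather than as a literal Hom-space — then the two resulting copies of $T(\overline{\Lambda})$ differ only by an automorphism that rescales the generators within each $\Lambda_{\alpha^*}$-component, which carries $c$ to $c$ and hence induces an isomorphism $\Pi(\Lambda)\xrightarrow{\sim}\Pi(\Lambda)$; this is the appropriate reading of ``does not depend up to isomorphism'' in the generality of the lemma and is the point at which the argument genuinely differs from \cite[Lemma 1.1]{DR80}.
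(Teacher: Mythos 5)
Your proposal is correct and is essentially the paper's own argument: the paper proves this lemma by a one-line appeal to Corollary \ref{uniquenesscasimirelement}, exactly as you do, and your additional bookkeeping about the two module structures on $\Lambda_\alpha$ and the reflexivity identification for $c_{\alpha^*}$ is a faithful (if more detailed) elaboration of that same reduction.
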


\begin{proof}
This follows immediately from Corollary \ref{uniquenesscasimirelement}.
\end{proof}

As noted before, the preprojective algebra $\Pi(\Lambda)$ does not depend on the orientation of $Q$ in the following sense.

\begin{lem}
Let $\Lambda\colon Q\to \Algpro$ be a dualisable pro-species of algebras.  Choose any orientation $Q'$ of $Q$. Define a new pro-species $\Lambda'\colon Q'\to \Algpro$ by $\Lambda_\mathtt{i}'=\Lambda_\mathtt{i}$ for $\mathtt{i}\in Q'_0=Q_0$ and  
\[\Lambda_\alpha' =\begin{cases}\Lambda_\alpha&\text{if $\alpha\in Q_1$}\\\Hom(\Lambda_\alpha,\Lambda_{s(\alpha)})&\text{if $\alpha=\beta^*$ for $\beta\in Q_1$}\end{cases}\]
Then $\Lambda'$ is dualisable and $\Pi(\Lambda')\cong \Pi(\Lambda)$.
\end{lem}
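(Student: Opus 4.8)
The plan is to construct an algebra isomorphism $T(\overline{\Lambda'})\xrightarrow{\ \sim\ }T(\overline\Lambda)$ that carries the two-sided ideal $\langle c'\rangle$ onto $\langle c\rangle$; quotienting then yields $\Pi(\Lambda')\cong\Pi(\Lambda)$. Before that one must check that $\Lambda'$ is dualisable, so that $\Pi(\Lambda')$ is even defined. For an arrow of $Q'$ already lying in $Q_1$ this is the dualisability of $\Lambda$; for a reversed arrow $\beta^{*}$ (with $\beta\in Q_1$) it amounts to the statement that the $\Hom$-dual of the $\Hom$-dual of $\Lambda_\beta$ is again $\Lambda_\beta$ as a bimodule from either side, i.e. the biduality isomorphism $P\cong P^{\vee\vee}$ for finitely generated projective modules — a formal consequence of the dual basis Lemma \ref{dualbasislemma} — together with the dualisability of $\Lambda$ used once more to pass between the left and the right dual.

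The two structural inputs are: (1) the double quiver depends only on the underlying graph, giving a canonical identification of quivers $\overline{Q'}=\overline Q$; under it the set of arrows $Q_1\sqcup Q_1^{*}$ equals $Q'_1\sqcup (Q'_1)^{*}$, but which member of each pair $\{\gamma,\gamma^{*}\}$ counts as ``original'' and which as ``starred'' is toggled exactly on the reversed edges. (2) An isomorphism of pro-species $\overline{\Lambda'}\cong\overline\Lambda$ over this common double quiver: the identity on all vertices, the identity on every arrow over an unreversed edge, and on the two arrows $\{\beta^{*},(\beta^{*})^{*}=\beta\}$ over a reversed edge the dualisability isomorphism $\overline{\Lambda'}_{\beta^{*}}\cong\overline\Lambda_{\beta^{*}}$ and the biduality isomorphism $\overline{\Lambda'}_{\beta}\cong(\overline\Lambda_{\beta^{*}})^{\vee}\cong\overline\Lambda_{\beta}$. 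Here one chooses these two component isomorphisms to be transpose-inverse of each other, which is permitted since any bimodule isomorphism keeps the data a morphism of pro-species. Functoriality of the tensor algebra turns this into an algebra isomorphism $\Psi\colon T(\overline{\Lambda'})\xrightarrow{\ \sim\ }T(\overline\Lambda)$.

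Now I track the relation. The Casimir element of a finitely generated projective module is the canonical preimage of the identity under the dual basis isomorphism, hence natural with respect to isomorphisms and uniquely determined (Corollary \ref{uniquenesscasimirelement}); since the component isomorphisms of $\Psi$ on each pair $\{\gamma,\gamma^{*}\}$ were chosen mutually transpose-inverse, it follows that $\Psi(c'_{\gamma})=c_{\gamma}$ for every arrow $\gamma$ of $\overline Q=\overline{Q'}$. Consequently $\Psi(c')=\sum_{\gamma}\sgn'(\gamma)\,c_{\gamma}$, which equals $c=\sum_{\gamma}\sgn(\gamma)\,c_{\gamma}$ except that on each reversed edge the signs of both summands $c_{\beta}$ and $c_{\beta^{*}}$ have been flipped (because $\sgn'$ and $\sgn$ are interchanged on that pair). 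To repair this, post-compose $\Psi$ with the algebra automorphism $\phi$ of $T(\overline\Lambda)$ that multiplies $\Lambda_{\beta}$ by $-1$ for every reversed arrow $\beta$ and is the identity on all other arrow spaces; rescaling a single arrow extends uniquely and multiplicatively to an involutive, hence bijective, algebra endomorphism of the tensor algebra. Since each of $c_{\beta}$ and $c_{\beta^{*}}$ involves the arrow $\beta$ exactly once, $\phi$ multiplies precisely these two summands by $-1$ and fixes the rest, so $\phi\Psi(c')=c$. Hence $\phi\Psi$ is an algebra isomorphism with $(\phi\Psi)\langle c'\rangle=\langle c\rangle$, and passing to quotients gives $\Pi(\Lambda')\cong\Pi(\Lambda)$.

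The step I expect to cost the most work is making the identifications in (2) precise enough that the formal naturality and uniqueness of the Casimir element really do deliver $\Psi(c'_{\gamma})=c_{\gamma}$ on the nose: one has to keep left and right duals strictly apart and invoke the dualisability hypothesis for $\Lambda$ at exactly the right moments. Once that is settled, the sign bookkeeping handled by $\phi$ is routine. (If desired, one may first reduce by reversing one arrow at a time to the case where $Q'$ and $Q$ differ in a single arrow, which lightens the notation but not the substance.)
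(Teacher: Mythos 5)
Your proposal is correct and follows essentially the same route as the paper's (much terser) proof: identify $\Lambda_\alpha$ with its bidual via reflexivity of finitely generated projectives and dualisability, observe that the dual basis of $\Lambda_{\alpha^*}$ is the transposed dual basis of $\Lambda_\alpha$ so that the Casimir elements match under the identification $\overline{\Lambda'}\cong\overline{\Lambda}$, and absorb the sign discrepancy on reversed edges by rescaling one arrow of each reversed pair by $-1$. Your write-up merely makes explicit the sign-flipping automorphism and the appeal to Corollary \ref{uniquenesscasimirelement} that the paper leaves implicit in the phrase ``interchanging the roles of $\alpha$ and $\alpha^*$, changing the signs of one of them.''
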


\begin{proof}
Since $\Lambda$ is dualisable and projective modules are reflexive, it is immediate that $\Hom_{\Lambda_{t(\alpha)}}(\Lambda_{\alpha^*},\Lambda_{t(\alpha)})\cong \Lambda_\alpha$. Furthermore, if $\Lambda_\alpha$ is identified with its double dual then $f_1,\dots, f_m\in \Lambda_{\alpha^*}, x_1,\dots,x_m\in \Lambda_\alpha$ give a dual basis of $\Lambda_{\alpha^*}\otimes \Lambda_\alpha$ (as noted in the proof of Lemma \ref{dualbasislemma} $f=\sum_i f(x_i)f_i$). As in the classical case, interchanging the roles of $\alpha$ and $\alpha^*$, changing the signs of one of them, then yields an isomorphism between $\Pi(\Lambda)$ and $\Pi(\Lambda')$.
\end{proof}

In the remainder of this section, we introduce some prerequisites for the definition of reflection functors in the following section. 

\begin{prop}[cf. {\cite[Exercise 2.20]{Lam99}}]
Let $R$ and $S$ be rings. Let $M$, $N$ be $R$-modules. Then, there is a linear map $f\colon \Hom_R(M,R)\otimes_R N\to \Hom_R(M,N)$ sending $\varphi\otimes n$ to the map $m\mapsto \varphi(m)n$. If $M$ is also an $S^{op}$-module, then this map is a homomorphism of $S$-modules. If $M$ is finitely generated and $R$-projective, then $f$ is bijective with inverse given by $\psi\mapsto \sum_{i}f_i\otimes \psi(x_i)$ where $x_i,f_i$ are chosen as in Lemma \ref{dualbasislemma}.
\end{prop}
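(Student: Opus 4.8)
The plan is to verify the three claims in turn: that $f$ is a well-defined linear map, that it is $S$-linear when $M$ is an $S^{\mathrm{op}}$-module, and that it is bijective with the stated inverse when $M$ is finitely generated projective. For the first claim, I would check that the assignment $(\varphi,n)\mapsto (m\mapsto \varphi(m)n)$ is $\mathbbm{k}$-bilinear and $R$-balanced, i.e. that $(\varphi r)\otimes n$ and $\varphi\otimes(rn)$ are sent to the same map; since $(\varphi r)(m)=\varphi(m)r$ by our convention on $\Hom_R(M,R)$ as a right $R$-module, both give $m\mapsto \varphi(m)(rn)$, so the universal property of the tensor product produces the map $f$. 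The target is genuinely $\Hom_R(M,N)$: the map $m\mapsto \varphi(m)n$ is additive in $m$, and for $s\in R$ one has $m\mapsto \varphi(sm)n=s\varphi(m)n$, so it is $R$-linear.

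For the $S$-linearity: if $M$ is additionally an $S^{\mathrm{op}}$-module (so the left $R$-action and right $S$-action commute, making $M$ an $R$-$S$-bimodule), then $\Hom_R(M,R)$ carries a left $S$-action via $(s\cdot\varphi)(m)=\varphi(ms)$, hence $\Hom_R(M,R)\otimes_R N$ is a left $S$-module, and $\Hom_R(M,N)$ carries a left $S$-action by $(s\cdot\psi)(m)=\psi(ms)$. I would simply compute: $f(s\cdot\varphi\otimes n)$ sends $m$ to $(s\cdot\varphi)(m)n=\varphi(ms)n$, while $(s\cdot f(\varphi\otimes n))$ sends $m$ to $f(\varphi\otimes n)(ms)=\varphi(ms)n$, so the two agree; since $f$ is additive this suffices.

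For bijectivity, assume $M$ is finitely generated and $R$-projective and fix a dual basis $x_1,\dots,x_m\in M$, $f_1,\dots,f_m\in\Hom_R(M,R)$ with $x=\sum_i x_i f_i(x)$ for all $x\in M$, as in Lemma \ref{dualbasislemma}. Define $g\colon \Hom_R(M,N)\to \Hom_R(M,R)\otimes_R N$ by $g(\psi)=\sum_i f_i\otimes\psi(x_i)$ and check $g$ is inverse to $f$. One way round: $f(g(\psi))$ sends $m\mapsto\sum_i f_i(m)\psi(x_i)=\sum_i\psi(x_i f_i(m))=\psi\!\left(\sum_i x_i f_i(m)\right)=\psi(m)$, using $R$-linearity of $\psi$ and the dual basis identity. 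The other way round: for $\varphi\otimes n$, $g(f(\varphi\otimes n))=\sum_i f_i\otimes f(\varphi\otimes n)(x_i)=\sum_i f_i\otimes\varphi(x_i)n=\sum_i f_i\varphi(x_i)\otimes n$, and $\sum_i f_i\varphi(x_i)=\varphi$ because for every $m$, $\left(\sum_i f_i\varphi(x_i)\right)(m)=\sum_i f_i(m)\varphi(x_i)=\varphi\!\left(\sum_i x_i f_i(m)\right)=\varphi(m)$; hence $g(f(\varphi\otimes n))=\varphi\otimes n$. By additivity this identifies $g$ as a two-sided inverse of $f$, so $f$ is bijective. I do not anticipate a genuine obstacle here; the only mild care needed is bookkeeping of left/right actions and the sided conventions for $\Hom_R(M,R)$, and making sure the manipulation $\sum_i f_i\otimes\varphi(x_i)n=\sum_i f_i\varphi(x_i)\otimes n$ is legitimate, which it is since $\varphi(x_i)n=\varphi(x_i)\cdot n$ with $\varphi(x_i)\in R$ and the tensor is over $R$.
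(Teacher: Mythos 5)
Your proposal is correct and follows essentially the same route as the paper's proof: construct $f$ from an $R$-balanced map via the universal property, observe $S$-linearity directly, and verify that $\psi\mapsto\sum_i f_i\otimes\psi(x_i)$ is a two-sided inverse using the dual basis identity. The paper leaves the inverse verification as "a direct calculation"; you have simply carried it out, with the correct care about the sided conventions.
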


\begin{proof}
Define $f\colon \Hom_R(M,R)\times N\to \Hom_R(M,N)$ by $(\varphi,n)\mapsto (m\mapsto \varphi(m)n)$. This is $R$-balanced as $(\varphi r)(m):=\varphi(m)r$. If $M$ is also an $S^{op}$-module, then $\Hom_R(M,R)$ and $\Hom_R(M,N)$ have an $S$-module structure defined by $(s\varphi)(m)=\varphi(ms)$. It is obvious that $f$ is $S$-linear. A direct calculation shows that the claimed map is indeed inverse.
\end{proof}

\begin{defn}
Let $\Lambda$ be a dualisable pro-species of algebras, $M_{s(\alpha)}$ a left $\Lambda_{s(\alpha)}$-module and $N_{t(\alpha)}$ a left $\Lambda_{t(\alpha)}$ module. Then, combining the tensor-hom adjunction with the previous proposition we obtain an isomorphism 
\begin{align*}
\Hom_{\Lambda_{t(\alpha)}}(\Lambda_\alpha\otimes M_{s(\alpha)},N_{t(\alpha)})&\to \Hom_{\Lambda_{s(\alpha)}}(M_{s(\alpha)},\Hom_{\Lambda_{t(\alpha)}}(\Lambda_{\alpha},N_{t(\alpha)}))\\
&\to \Hom_{\Lambda_{s(\alpha)}}(M_{s(\alpha)}, \Lambda_{\alpha^*}\otimes N_{t(\alpha)}).
\end{align*}
We denote it by $f\mapsto f^\vee$, its inverse by $f\mapsto f^\wedge$.
\end{defn}

This isomorphism is used in Section \ref{sec:reflectionfunctors} to construct reflection functors in the context of pro-species of algebras.

Let $\Lambda$ be a dualisable pro-species of algebras. Let $M\in \modu T(\overline{\Lambda})$. Recall that each arrow $\alpha\in \overline{Q}_1$ defines a map $M_\alpha\colon \Lambda_\alpha\otimes_{\Lambda_{s(\alpha)}} M_{s(\alpha)}\to M_{t(\alpha)}$ and a map $M_\alpha^\vee\colon M_{s(\alpha)}\to \Lambda_{\alpha^*}\otimes_{\Lambda_{t(\alpha)}} M_{t(\alpha)}$. Define 
\[M_{\mathtt{i},\arrowin}:=\bigoplus_{\substack{\alpha\in \overline{Q}_1\\t(\alpha)=\mathtt{i}}}\sgn(\alpha)M_{\alpha} \text{\quad and $M_{\mathtt{i},\arrowout}:=\bigoplus_{\substack{\alpha\in \overline{Q}_1\\s(\alpha)=\mathtt{i}}}M_\alpha^{\vee}$}.\]
The following proposition generalises \cite[Proposition 5.2]{GLS16}.

\begin{prop}\label{inout}
Let $\Lambda$ be a dualisable pro-species of algebras. The category $\modu \Pi(\Lambda)$ is equivalent to the full subcategory of $\modu T(\overline{\Lambda})$ whose modules satisfy $M_{\mathtt{i},\arrowin}\circ M_{\mathtt{i},\arrowout}=0$ for all $\mathtt{i}\in Q_0$.
\end{prop}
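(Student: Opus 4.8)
The plan is to unwind the definition of $\Pi(\Lambda) = T(\overline{\Lambda})/\langle c\rangle$ and translate the condition ``$c$ annihilates $M$'' into the stated relation on the maps $M_{\mathtt{i},\arrowin}$ and $M_{\mathtt{i},\arrowout}$. Since $\modu T(\overline{\Lambda})$ is equivalent to $\rep(\overline{\Lambda})$, a $T(\overline{\Lambda})$-module $M$ is a $\Pi(\Lambda)$-module precisely when the two-sided ideal $\langle c\rangle$ acts as zero on $M$, and because $c = \sum_{\mathtt{i}\in Q_0} \varepsilon_\mathtt{i} c \varepsilon_\mathtt{i}$ is in fact a sum of ``local'' pieces $c_\mathtt{i} := \varepsilon_\mathtt{i} c \varepsilon_\mathtt{i} = \sum_{\alpha\in\overline{Q}_1,\, t(\alpha)=s(\alpha^*)=\mathtt{i}} \sgn(\alpha)\, c_\alpha$ sitting inside $\varepsilon_\mathtt{i} T(\overline{\Lambda})\varepsilon_\mathtt{i}$ in degree $2$, it suffices to check that each $c_\mathtt{i}$ acts as zero on $\varepsilon_\mathtt{i} M = M_\mathtt{i}$. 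So the first step is to identify, for a fixed vertex $\mathtt{i}$, the element $c_\mathtt{i}$ inside $\bigoplus_{\alpha: t(\alpha)=\mathtt{i}} \Lambda_{\alpha\alpha^*} = \bigoplus_{\alpha: t(\alpha)=\mathtt{i}} \Lambda_\alpha \otimes_{\Lambda_{s(\alpha)}} \Lambda_{\alpha^*}$ (the degree-$2$ component of $T(\overline{\Lambda})$ landing at $\mathtt{i}$ from $\mathtt{i}$), recalling that $c_\alpha = \sum_j x_j^{(\alpha)}\otimes f_j^{(\alpha)}$ is the Casimir element of $\Lambda_\alpha\otimes\Lambda_{\alpha^*}$ from Definition \ref{preprojectivealgebra}.

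Next I would compute the action of $c_\mathtt{i}$ on an element $m\in M_\mathtt{i}$. Writing out the multiplication in $T(\overline{\Lambda})$ and then applying the module structure, the contribution of the summand $\sgn(\alpha) c_\alpha = \sgn(\alpha)\sum_j x_j^{(\alpha)}\otimes f_j^{(\alpha)}$ to $c_\mathtt{i}\cdot m$ is $\sgn(\alpha)\sum_j M_\alpha\bigl(x_j^{(\alpha)}\otimes M_{\alpha^*}^{\vee}\text{-type term}\bigr)$; more precisely, one first applies the arrow $\alpha^*$-part, moving $m\in M_\mathtt{i}$ along $M_{\alpha^*}^\vee \colon M_\mathtt{i} \to \Lambda_\alpha\otimes_{\Lambda_{s(\alpha)}} M_{s(\alpha)}$ (here one uses the identification $\overline{\Lambda}_{(\alpha^*)^*}\cong \Lambda_\alpha$ together with the dualisability hypothesis, exactly as in the lemma preceding Proposition \ref{inout}), and then applies $M_\alpha$. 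The key bookkeeping point is that by the Dual Basis Lemma \ref{dualbasislemma}, the composite ``pair with $f_j^{(\alpha)}$, sum against $x_j^{(\alpha)}$'' over the dual basis reconstitutes the identity, so that $\sum_j M_\alpha(x_j^{(\alpha)}\otimes -) \circ (\text{the }\alpha^* \text{ component of } M_{\mathtt{i},\arrowout})$ is precisely the $\alpha$-component of $M_{\mathtt{i},\arrowin}$ precomposed with the $\alpha^*$-component of $M_{\mathtt{i},\arrowout}$. Summing over all $\alpha$ with $t(\alpha)=\mathtt{i}$ in the double quiver (which is the same as summing over all arrows of $\overline{Q}_1$ incident to $\mathtt{i}$, with signs), one obtains $c_\mathtt{i}\cdot m = (M_{\mathtt{i},\arrowin}\circ M_{\mathtt{i},\arrowout})(m)$. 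Hence $c$ acts as zero on $M$ if and only if $M_{\mathtt{i},\arrowin}\circ M_{\mathtt{i},\arrowout}=0$ for all $\mathtt{i}$.

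Finally I would package this as an equivalence of categories: the full subcategory of $\modu T(\overline{\Lambda})$ cut out by the relations $M_{\mathtt{i},\arrowin}\circ M_{\mathtt{i},\arrowout}=0$ is exactly the essential image of the restriction functor $\modu \Pi(\Lambda)\to \modu T(\overline{\Lambda})$ along the quotient map $T(\overline{\Lambda})\twoheadrightarrow \Pi(\Lambda)$; this functor is fully faithful since the quotient is surjective, and its image is characterised by the annihilation of $\langle c\rangle$, which by the computation above is the stated condition. Morphisms correspond on both sides to $T(\overline{\Lambda})$-linear (equivalently $\Pi(\Lambda)$-linear) maps, so there is nothing further to check for full faithfulness.

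The main obstacle is the second step: getting the variance and the adjunction/dualisation conventions to line up so that the Casimir element, which a priori lives in $\Lambda_\alpha\otimes_{\Lambda_{s(\alpha)}}\Lambda_{\alpha^*}$ and is defined via a dual basis of $\Lambda_{\alpha^*}$ as a right $\Lambda_{t(\alpha)}$-module, produces exactly the composite $M_\alpha\circ M_{\alpha^*}^\vee$ (and not, say, its transpose or a map twisted by $\Hom_{\Lambda^{op}}$ versus $\Hom_\Lambda$). This is where dualisability is essential — it is what makes $(\alpha^*)^* $ canonically $\alpha$ as bimodules and lets one pass freely between $M_{\alpha^*}$ and $M_{\alpha^*}^\vee = M_\alpha\text{-adjoint}$ — and it is the place to be careful, but it is a finite diagram chase using only Lemma \ref{dualbasislemma} and the adjunction isomorphism $f\mapsto f^\vee$ established just before the proposition. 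Everything else is the routine translation between modules over a quotient algebra and modules over the original algebra satisfying the defining relations.
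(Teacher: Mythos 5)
Your proposal is correct and follows essentially the same route as the paper: both arguments reduce to the observation that, via the explicit dual-basis formula $M_\alpha^\vee(m)=\sum_j f_j\otimes M_\alpha(x_j\otimes m)$, the composite $M_{\mathtt{i},\arrowin}\circ M_{\mathtt{i},\arrowout}$ computes exactly the action of the local piece $\varepsilon_\mathtt{i}c\varepsilon_\mathtt{i}$ of the Casimir relation on $M_\mathtt{i}$, so that the relation $\langle c\rangle$ annihilates $M$ precisely when these composites vanish. The only difference is one of direction (you expand the action of $c$ and recognise the composite, the paper expands the composite and recognises $c$) together with your slightly more explicit packaging of the fully faithful restriction functor along $T(\overline{\Lambda})\twoheadrightarrow\Pi(\Lambda)$, which the paper leaves implicit.
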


\begin{proof}
For an object $M=(M_\mathtt{i},M_\alpha,M_{\alpha^*})_{\mathtt{i},\alpha}$ in $\modu T(\overline{\Lambda})$ note that 
\[M_{\mathtt{i},\arrowin}\circ M_{\mathtt{i},\arrowout}=\sum_{\substack{\alpha\in \overline{Q}\\s(\alpha)=\mathtt{i}}} \sgn(\alpha) M_{\alpha^*}\circ M_\alpha^\vee.\]
Recall that $M_\alpha^\vee$ is given explicitly by $M_\alpha^\vee(m)=\sum_{j}f_j\otimes M_\alpha(x_j\otimes m)$ for all $m\in M_{s(\alpha)}$. Hence,
\[M_{\mathtt{i},\arrowin}\circ M_{\mathtt{i},\arrowout}(m)=\sum_{\substack{\alpha\in \overline{Q}\\s(\alpha)=\mathtt{i}}} \sgn(\alpha)\sum_j M_{\alpha^*}(f_j \otimes M_\alpha(x_j\otimes m)),\]
which vanishes if and only if $M\in \Pi(\Lambda)$. 
\end{proof}

\section{BGP reflection functors}
\label{sec:reflectionfunctors}

In this section, we generalise BGP reflection functors to our setup. We start by giving a partial description of a projective bimodule resolution of the preprojective algebra analogous to \cite[Lemma 8.1.1]{GLS07}.

\begin{lem}\label{bimoduleresolutionpreprojective}
Let $\Lambda\colon Q\to \Algpro$ be a dualisable pro-species of algebras. Then, the following is the start of a projective bimodule resolution of $\Pi(\Lambda)$.
\[
\begin{tikzcd}
Q_\bullet\colon \bigoplus_{\mathtt{i}\in Q_0} \Pi(\Lambda)\otimes_{\Lambda_\mathtt{i}}\Pi(\Lambda) \arrow{r}{d^1}&\bigoplus_{\alpha\in \overline{Q}_1}\Pi(\Lambda)\otimes_{\Lambda_{t(\alpha)}} \Lambda_\alpha\otimes_{\Lambda_{s(\alpha)}} \Pi(\Lambda)\\
{\phantom{Q_\bullet\colon \bigoplus_{\mathtt{i}\in Q_0} \Pi(\Lambda)\otimes_{\Lambda_\mathtt{i}}\Pi(\Lambda)}}\arrow{r}{d^0}&\bigoplus_{\mathtt{i}\in Q_0}\Pi(\Lambda)\otimes_{\Lambda_\mathtt{i}} \Pi(\Lambda)
\end{tikzcd}
\]
where $d^0(1\otimes x\otimes 1)=x\otimes 1-1\otimes x$ for $x\in \Lambda_\alpha$ and $d^1(1\otimes 1)=\sum_{\substack{\alpha\in \overline{Q}_1\\s(\alpha)=\mathtt{i}}}\sgn(\alpha)(c_\alpha\otimes 1+1\otimes c_\alpha)$.
\end{lem}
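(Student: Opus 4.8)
The plan is to verify directly that the displayed sequence $Q_\bullet$ is the truncation of a projective bimodule resolution of $\Pi(\Lambda)$, by checking three things: that $d^0$ is surjective onto $\ker(\mathrm{mult})$, that $d^1$ lands in $\ker(d^0)$, and that the sequence is exact at the middle term. The most economical route is to reduce to the already-established bimodule resolution $P_\bullet$ of $T(\overline\Lambda)$ from the Proposition preceding the Corollary (the one with $d(p\otimes h\otimes q)=ph\otimes q-p\otimes hq$), apply the right-exact functor $\Pi(\Lambda)\otimes_{T(\overline\Lambda)}(-)\otimes_{T(\overline\Lambda)}\Pi(\Lambda)$, and then splice in the single relation $c$ by hand. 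Under that base change $P_\bullet$ becomes precisely the part of $Q_\bullet$ consisting of the term $\bigoplus_{\alpha\in\overline Q_1}\Pi(\Lambda)\otimes_{\Lambda_{t(\alpha)}}\Lambda_\alpha\otimes_{\Lambda_{s(\alpha)}}\Pi(\Lambda)\to\bigoplus_{\mathtt i\in Q_0}\Pi(\Lambda)\otimes_{\Lambda_\mathtt i}\Pi(\Lambda)\to\Pi(\Lambda)$, with $d^0$ exactly as stated; this gives exactness at the rightmost spot and at the $\bigoplus_\alpha$-term \emph{up to} the image of $d^1$, i.e.\ $H_1$ of this complex is $\mathrm{Tor}_1^{T(\overline\Lambda)}(\Pi(\Lambda),\Pi(\Lambda))$ which is controlled by the ideal $\langle c\rangle$.

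Concretely, I would argue as follows. First, since $\Pi(\Lambda)=T(\overline\Lambda)/\langle c\rangle$ and $\langle c\rangle=T(\overline\Lambda)\,c\,T(\overline\Lambda)$ with $c=\sum_{\alpha\in\overline Q_1}\sgn(\alpha)c_\alpha$ a sum over vertices of elements $\sum_{\substack{\alpha\in\overline Q_1\\ s(\alpha)=\mathtt i}}\sgn(\alpha)c_\alpha\in \varepsilon_\mathtt i T(\overline\Lambda)\varepsilon_\mathtt i$, there is the standard presentation $\bigoplus_{\mathtt i}\Pi(\Lambda)\otimes_{\Lambda_\mathtt i}\Pi(\Lambda)\xrightarrow{\ 1\otimes1\mapsto c_\mathtt i\otimes1+1\otimes c_\mathtt i\ }\langle c\rangle\to0$ once we push everything into $\Pi(\Lambda)$-bimodules — the point being that in a quotient $R/I$ with $I$ generated by central-in-the-relevant-corner elements, the conormal module $I/I^2$ is generated by those elements, and the two-sided nature produces the symmetrised expression $c_\alpha\otimes1+1\otimes c_\alpha$ (the $d(p\otimes h\otimes q)=ph\otimes q-p\otimes hq$ formula, applied to $c$, degenerates to this since $c_\alpha$ is a sum of terms $x_j\otimes f_j$ and one checks the resulting element equals $c_\alpha\otimes 1+1\otimes c_\alpha$ modulo the image of $d^0$). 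Second, I splice: from the short exact sequence $0\to\langle c\rangle\to T(\overline\Lambda)\to\Pi(\Lambda)\to0$ of bimodules, tensoring the bimodule resolution $P_\bullet$ of $T(\overline\Lambda)$ with $\Pi(\Lambda)$ on both sides and using that $P_\bullet$'s terms are projective as one-sided modules (so the relevant $\mathrm{Tor}$'s vanish), one identifies the homology of the base-changed $P_\bullet$ at the middle spot with $\langle c\rangle/\langle c\rangle^2$ tensored appropriately; composing with the presentation of $I/I^2$ from the first step yields exactly $d^1$.

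The key steps in order: (1) write down $d^0$ and $d^1$ precisely and check $d^0 d^1=0$ by a direct computation — here one uses that $\sum_{\substack{\alpha\in\overline Q_1\\ s(\alpha)=\mathtt i}}\sgn(\alpha)c_\alpha$ maps to $c_\mathtt i$ which is $0$ in $\Pi(\Lambda)$, so $d^0$ applied to $d^1(1\otimes1)$ telescopes to $(c_\mathtt i\otimes1+1\otimes c_\mathtt i)$ pushed through $d^0$, i.e.\ to $c_\mathtt i\otimes1-1\otimes c_\mathtt i$-type cancellations landing in the relations; (2) exactness at $\Pi(\Lambda)$: $\mathrm{mult}$ is clearly surjective and its kernel is generated by $x\otimes1-1\otimes x$, which is the image of $d^0$ — this is just the base change of the corresponding (exact) statement for $T(\overline\Lambda)$, using that $P_\bullet$ splits as a sequence of right modules so exactness is preserved by $\otimes_{T(\overline\Lambda)}\Pi(\Lambda)$, exactly as in the proof of the Corollary; (3) exactness at the $\bigoplus_\alpha$-term modulo $\mathrm{im}\,d^1$: this is the content of identifying $H_1$ with the conormal module and recognising that the generators of $\langle c\rangle$ as a bimodule are the $c_\mathtt i$, so every cycle in $\ker d^0/\mathrm{im}(\text{lift of }P_1\text{'s differential})$ comes from $d^1$. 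I expect step (3) to be the main obstacle: one must be careful that $\langle c\rangle$ really is generated as a $T(\overline\Lambda)$-bimodule by the $c_\mathtt i=\sum_{s(\alpha)=\mathtt i}\sgn(\alpha)c_\alpha$ (true since $c$ decomposes orthogonally over vertices via the idempotents $\varepsilon_\mathtt i$), and that the map $P_1\otimes_{T(\overline\Lambda)}\Pi(\Lambda)\otimes\cdots$ onto $\ker d^0$ has image exactly complementary to $\mathrm{im}\,d^1$ — i.e.\ that no further relations are needed at this homological degree, which is where one genuinely uses the dualisable hypothesis (so that $\Lambda_{\alpha^*}=\Hom_{\Lambda_{s(\alpha)}^{op}}(\Lambda_\alpha,\Lambda_{s(\alpha)})\cong\Hom_{\Lambda_{t(\alpha)}}(\Lambda_\alpha,\Lambda_{t(\alpha)})$ and the Casimir elements $c_\alpha$ and $c_{\alpha^*}$ are genuinely dual) and the projectivity-from-both-sides of the $\Lambda_\alpha$. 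Since the statement only claims ``the start of'' a resolution, I would not attempt to extend past $d^1$; it suffices to exhibit exactness at the two displayed right-hand spots, which the above delivers.
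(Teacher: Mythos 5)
Your argument is correct in outline but takes a genuinely different route from the paper. The paper's proof is very short: it observes that $\Coker(d^0)=\Pi(\Lambda)$ is clear, that $Q_\bullet$ is the total complex of a double complex (graded by the tensor degree of $\Pi(\Lambda)$), and then invokes the acyclic assembly lemma of Weibel in the manner of Brenner--Butler--King. You instead base-change the bimodule resolution $P_\bullet$ of $T(\overline{\Lambda})$ along $T(\overline{\Lambda})\to\Pi(\Lambda)$ and splice in the conormal module of $\langle c\rangle$; this is a standard and perfectly workable alternative, and arguably more explicit about where the formula for $d^1$ comes from. Concretely: right-exactness of $\Pi(\Lambda)\otimes_{T(\overline{\Lambda})}(-)\otimes_{T(\overline{\Lambda})}\Pi(\Lambda)$ gives exactness at the two rightmost spots for free; the long exact sequence identifies $\ker d^0$ with the image of the connecting map out of $\operatorname{Tor}_1^{T(\overline{\Lambda})}(\Pi(\Lambda),\Pi(\Lambda))\cong\langle c\rangle/\langle c\rangle^2$; this conormal module is generated as a $\Pi(\Lambda)$-bimodule by the classes of the $\varepsilon_{\mathtt{i}}c\varepsilon_{\mathtt{i}}$ (by the orthogonal idempotent decomposition you note); and the noncommutative-derivative computation shows the connecting map sends these generators precisely to the elements $d^1(1\otimes 1)$, since $d^0$ applied to $\sum\sgn(\alpha)(c_\alpha\otimes 1+1\otimes c_\alpha)$ telescopes to $c_{\mathtt{i}}\otimes 1-1\otimes c_{\mathtt{i}}=0$ in $\Pi(\Lambda)\otimes_{\Lambda_{\mathtt{i}}}\Pi(\Lambda)$. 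One justification you give is off, though harmlessly so: after the first base change $-\otimes_{T(\overline{\Lambda})}\Pi(\Lambda)$, the terms $P_j\otimes_{T(\overline{\Lambda})}\Pi(\Lambda)$ are no longer known to be projective or flat as left $T(\overline{\Lambda})$-modules --- that would amount to $\varepsilon_{\mathtt{i}}\Pi(\Lambda)$ being a projective $\Lambda_{\mathtt{i}}$-module, i.e.\ local projectivity of the preprojective algebra, which is not available at this point --- so the Tor-vanishing you invoke to identify $H_1$ \emph{exactly} with $\langle c\rangle/\langle c\rangle^2$ is unjustified. Fortunately you never need the exact identification: for exactness at the middle term it suffices that $\ker d^0$ is the image of a bimodule map out of $\langle c\rangle/\langle c\rangle^2$, which is automatic from the long exact sequence, and that the generators land in $\operatorname{im} d^1$. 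Finally, dualisability is not really what makes your step (3) work; it is needed only so that $\overline{\Lambda}$, the Casimir elements, and $\Pi(\Lambda)$ are defined at all.
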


\begin{proof}
It is obvious that $\Coker(d^0)=\Pi(\Lambda)$. Noting that $Q_\bullet$ is the total complex of a double complex, the statement follows from the acyclic assembly lemma \cite[(2.7.3)]{Wei94} in a similar way to \cite[Theorem 3.15]{BBK02}.
\end{proof}

For $\mathtt{i}\in Q_0$ define $I_\mathtt{i}:=\Pi(\Lambda)(1-\varepsilon_\mathtt{i})\Pi(\Lambda)$ to be the annihilator of the $\Pi(\Lambda)$-module $\Lambda_\mathtt{i}$. Define functors $\Sigma^+_\mathtt{i}:=\Hom_{\Pi(\Lambda)}(I_\mathtt{i},-)\colon \modu \Pi(\Lambda)\to \modu \Pi(\Lambda)$ and $\Sigma^-_\mathtt{i}:=I_i\otimes_{\Pi(\Lambda)} -\colon \modu \Pi(\Lambda)\to \modu\Pi(\Lambda)$. It follows that $(\Sigma_\mathtt{i}^+,\Sigma_\mathtt{i}^-)$ is a pair of adjoint functors, $\Sigma_\mathtt{i}^+$ being left exact and $\Sigma_\mathtt{i}^-$ being right exact. We will give an explicit description of the functors $\Sigma_\mathtt{i}^+$ and $\Sigma_\mathtt{i}^-$ generalising results by Baumann and Kamnitzer in \cite{BK12, BKT14}, and independently by Bolten in \cite{Bol10}. This in turn also generalises results by Geiss, Leclerc, and Schr\"oer in \cite{GLS16} for the case that $\Lambda_\mathtt{i}=k[x_\mathtt{i}]/(x_\mathtt{i}^{c_\mathtt{i}})$ and $\Lambda_\alpha$ a bimodule which is free from either side.  

Let $M\in \modu \Pi(\Lambda)$. Recall from Proposition \ref{inout} that $M_{\mathtt{i},\arrowout}$ induces a map $M_\mathtt{i}\to \ker M_{\mathtt{i},\arrowin}$ which will be denoted by $\overline{M}_{\mathtt{i},\arrowout}$. Define $N_\mathtt{i}:=\ker(M_{\mathtt{i},\arrowin})$ and the exact sequence 
\[
\begin{tikzcd}0\arrow{r} &N_\mathtt{i}\arrow{r}{(N_\alpha)^T_\alpha} &\bigoplus_{\substack{\alpha\in \overline{Q}_1\\t(\alpha)=\mathtt{i}}}\Lambda_\alpha\otimes M_{s(\alpha)}\arrow{r}{(M_\alpha)_{\alpha}} &M_\mathtt{i}\arrow{r} &0
\end{tikzcd}
\]
Let $\Sigma^+_\mathtt{i}(M)\in \modu \Pi(\Lambda)$ be defined by
\begin{align*}
\Sigma^+_\mathtt{i}(M)_\mathtt{j}&:=\begin{cases}M_\mathtt{j}&\text{if $\mathtt{j}\neq \mathtt{i}$,}\\N_\mathtt{i}&\text{if $\mathtt{j}=\mathtt{i}$,}\end{cases}\\
\Sigma^+_\mathtt{i}(M)_\alpha&:=\begin{cases}M_\alpha &\text{if $t(\alpha)\neq \mathtt{i}$ \text{ and } $s(\alpha)\neq \mathtt{i}$,}\\
N_{\alpha^*}^\wedge&\text{if $s(\alpha)=\mathtt{i}$,}\\ (\overline{M}_{\mathtt{i},\arrowout})_\alpha\circ M_\alpha &\text{if $t(\alpha)=\mathtt{i}$.}\end{cases}
\end{align*}

Dually if we denote $N_\mathtt{i}:=\Coker(M_{\mathtt{i},\arrowout})$ and by $\overline{M}_{\mathtt{i},\arrowin}$ the induced from $M_{\mathtt{i},\arrowin}$ map $\Coker(M_{\mathtt{i},\arrowout})\to M_\mathtt{i}$ and the exact sequence
\[
\begin{tikzcd}
0\arrow{r} &M_\mathtt{i}\arrow{r}{(M_\alpha^\vee)^T_\alpha} &\bigoplus_{\substack{\alpha\in \overline{Q}_1\\s(\alpha)=\mathtt{j}}}\Lambda_{\alpha^*}\otimes M_{t(\alpha)}\arrow{r}{(N_{\alpha})_{\alpha}} &N_\mathtt{i}\arrow{r} &0),
\end{tikzcd}
\]
$\Sigma_\mathtt{i}^-(M)\in \modu \Pi(\Lambda)$ can be defined via:
\begin{align*}
\Sigma^-_\mathtt{i}(M)_\mathtt{j}&:=\begin{cases}M_\mathtt{j}&\text{if $\mathtt{j}\neq \mathtt{i}$,}\\N_\mathtt{i}&\text{if $\mathtt{j}=\mathtt{i}$,}\end{cases}\\
\Sigma^-_\mathtt{i}(M)_\alpha&:=\begin{cases}M_\alpha &\text{if $s(\alpha)\neq \mathtt{i}$ and $t(\alpha)\neq \mathtt{i}$,}\\
N_{\alpha^*}&\text{if $t(\alpha)=\mathtt{i}$,}\\
(M_\alpha^{\vee}\circ (\overline{M}_{\mathtt{i},\arrowin})_{\alpha^*})^\wedge &\text{if $s(\alpha)=\mathtt{i}$}.\end{cases} 
\end{align*}

The constructions $\Sigma_\mathtt{i}^{\pm}$ can be extended to functors by the universal property of the kernel and the cokernel. The following lemma generalises \cite[Proposition 5.1]{BKT14}

\begin{lem}
The two definitions of $\Sigma_\mathtt{i}^+$ and $\Sigma_\mathtt{i}^-$ coincide.
\end{lem}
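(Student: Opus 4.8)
The statement asks us to show that the two descriptions of $\Sigma_\mathtt{i}^+$ (and dually $\Sigma_\mathtt{i}^-$) — one via $\Hom_{\Pi(\Lambda)}(I_\mathtt{i},-)$, the other via the explicit linear-algebra formula involving $N_\mathtt{i}=\ker M_{\mathtt{i},\arrowin}$ — agree. The plan is to compute $\Hom_{\Pi(\Lambda)}(I_\mathtt{i},M)$ directly using the projective bimodule resolution $Q_\bullet$ of $\Pi(\Lambda)$ from Lemma \ref{bimoduleresolutionpreprojective}, and match the result term by term with the explicit construction. Since it suffices by duality (interchanging $\alpha\leftrightarrow\alpha^*$ and negating signs, exactly as in the orientation-independence lemma) to treat $\Sigma_\mathtt{i}^+$, I would focus on that case and only indicate the dual argument at the end.

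First I would recall that $I_\mathtt{i}=\Pi(\Lambda)(1-\varepsilon_\mathtt{i})\Pi(\Lambda)$ fits into the short exact sequence of $\Pi(\Lambda)$-bimodules $0\to I_\mathtt{i}\to \Pi(\Lambda)\to \Lambda_\mathtt{i}\to 0$, so that $\Hom_{\Pi(\Lambda)}(I_\mathtt{i},M)$ can be computed from the long exact sequence obtained by applying $\Hom_{\Pi(\Lambda)}(-,M)$: one gets $\Hom_{\Pi(\Lambda)}(I_\mathtt{i},M)_\mathtt{j}=M_\mathtt{j}$ for $\mathtt{j}\neq\mathtt{i}$ (since $I_\mathtt{i}\varepsilon_\mathtt{j}=\Pi(\Lambda)\varepsilon_\mathtt{j}$ there), while at the vertex $\mathtt{i}$ the content is genuinely new. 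To identify the $\mathtt{i}$-component, I would truncate the resolution $Q_\bullet$ of $\Pi(\Lambda)$ at $\varepsilon_\mathtt{i}$: applying $\varepsilon_\mathtt{i}(-)$ (or rather using the minimal projective presentation of $\Lambda_\mathtt{i}$ as a $\Pi(\Lambda)$-module read off from $Q_\bullet$), the exact sequence
\[
\bigoplus_{\substack{\alpha\in\overline{Q}_1\\ t(\alpha)=\mathtt{i}}}\Pi(\Lambda)\varepsilon_{t(\alpha)}\otimes_{\Lambda_{t(\alpha)}}\Lambda_\alpha\otimes_{\Lambda_{s(\alpha)}}\varepsilon_{s(\alpha)}\Pi(\Lambda)\to \Pi(\Lambda)\varepsilon_\mathtt{i}\otimes_{\Lambda_\mathtt{i}}\varepsilon_\mathtt{i}\Pi(\Lambda)\to \Lambda_\mathtt{i}\to 0
\]
restricted appropriately gives a projective presentation of $\Lambda_\mathtt{i}$ over $\Pi(\Lambda)$; applying $\Hom_{\Pi(\Lambda)}(-,M)$ to the two-term projective complex $P_1\to P_0$ computing $\Hom_{\Pi(\Lambda)}(\Lambda_\mathtt{i},M)$ and using $\Hom_{\Pi(\Lambda)}(\Pi(\Lambda)\varepsilon_\mathtt{i}\otimes_{\Lambda_\mathtt{i}}\varepsilon_\mathtt{i}\Pi(\Lambda),M)\cong \Hom_{\Lambda_\mathtt{i}}(\varepsilon_\mathtt{i}\Pi(\Lambda)\otimes_{\Pi(\Lambda)}?,\,M_\mathtt{i})$-type adjunctions, one arrives precisely at $\ker\big(M_\mathtt{i}\to \bigoplus_{t(\alpha)=\mathtt{i}}\Hom_{\Lambda_{t(\alpha)}}(\Lambda_\alpha,M_\mathtt{i})\big)$, which after the hom-tensor identification of the proposition preceding Proposition \ref{inout} becomes $\ker\big(M_\mathtt{i}\xrightarrow{(M_\alpha^\vee)} \bigoplus \Lambda_{\alpha^*}\otimes M_{t(\alpha)}\big)$. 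Transporting through the Casimir-element description of $d^1$ identifies this kernel with $N_\mathtt{i}=\ker M_{\mathtt{i},\arrowin}$ exactly (this is where the compatibility $M_{\mathtt{i},\arrowin}\circ M_{\mathtt{i},\arrowout}=0$ and the sign conventions enter), and tracking the induced $\Pi(\Lambda)$-action on the other components reproduces the stated formulas $\Sigma_\mathtt{i}^+(M)_\alpha = N_{\alpha^*}^\wedge$ for $s(\alpha)=\mathtt{i}$ and $(\overline{M}_{\mathtt{i},\arrowout})_\alpha\circ M_\alpha$ for $t(\alpha)=\mathtt{i}$.

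The main obstacle I anticipate is bookkeeping: one must carefully chase the maps $d^0,d^1$ in the bimodule resolution through the various adjunction isomorphisms ($f\mapsto f^\vee$, the dual-basis identification $\Hom_R(M,R)\otimes_R N\cong \Hom_R(M,N)$) while keeping the signs $\sgn(\alpha)$ and the direction conventions ($\arrowin$ versus $\arrowout$) straight, and verify that the $\Pi(\Lambda)$-module structure on $\Hom_{\Pi(\Lambda)}(I_\mathtt{i},M)$ coming from the right action of $\Pi(\Lambda)$ on $I_\mathtt{i}$ matches the structure maps one writes down by hand in the explicit definition. Once the $\mathtt{i}$-component and the two ``adjacent'' families of structure maps are matched, functoriality (via the universal property of the kernel) is automatic, and the claim for $\Sigma_\mathtt{i}^-$ follows by the dual computation using $I_\mathtt{i}\otimes_{\Pi(\Lambda)}-$, the right-exactness of $\otimes$, and $\Coker$ in place of $\ker$.
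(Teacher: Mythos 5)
Your overall strategy --- compute $\Hom_{\Pi(\Lambda)}(I_\mathtt{i},-)$, resp.\ $I_\mathtt{i}\otimes_{\Pi(\Lambda)}-$, from the bimodule resolution $Q_\bullet$ and match components --- is exactly the paper's strategy (the paper carries it out for $\Sigma_\mathtt{i}^-$ by applying $\Lambda_\mathtt{i}\otimes_{\Pi(\Lambda)}-$ to $Q_\bullet$ to get the presentation $\Lambda_\mathtt{i}\otimes\Pi(\Lambda)\to\bigoplus_{t(\alpha)=\mathtt{i}}\Lambda_\alpha\otimes\Pi(\Lambda)\to\varepsilon_\mathtt{i}I_\mathtt{i}\to 0$ and then tensoring with $M$). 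But your execution of the $\mathtt{i}$-component for $\Sigma_\mathtt{i}^+$ contains an off-by-one error that breaks the argument. What you compute --- $\ker\bigl(M_\mathtt{i}\to\bigoplus_{s(\alpha)=\mathtt{i}}\Lambda_{\alpha^*}\otimes M_{t(\alpha)}\bigr)=\ker M_{\mathtt{i},\arrowout}$ --- is $\Hom_{\Pi(\Lambda)}(\Lambda_\mathtt{i},M)$, i.e.\ $\sub_\mathtt{i}(M)$, obtained from the presentation $P_1\to P_0\to\Lambda_\mathtt{i}\to 0$. The object you need is $\Hom_{\Pi(\Lambda)}(I_\mathtt{i}\varepsilon_\mathtt{i},M)$, and $I_\mathtt{i}\varepsilon_\mathtt{i}$ is the \emph{image of $d^0$}, so its projective presentation is shifted by one: $\Pi(\Lambda)\varepsilon_\mathtt{i}\xrightarrow{d^1}\bigoplus_{s(\alpha)=\mathtt{i}}\Pi(\Lambda)\otimes_{\Lambda_{t(\alpha)}}\Lambda_\alpha\to I_\mathtt{i}\varepsilon_\mathtt{i}\to 0$. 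Applying $\Hom_{\Pi(\Lambda)}(-,M)$ to \emph{that} and using $\Hom_{\Lambda_{t(\alpha)}}(\Lambda_\alpha,M_{t(\alpha)})\cong\Lambda_{\alpha^*}\otimes M_{t(\alpha)}$ gives $\ker\bigl(\bigoplus_{t(\beta)=\mathtt{i}}\Lambda_\beta\otimes M_{s(\beta)}\to M_\mathtt{i}\bigr)=\ker M_{\mathtt{i},\arrowin}=N_\mathtt{i}$, which is what the explicit definition requires.

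The sentence ``transporting through the Casimir-element description of $d^1$ identifies this kernel with $N_\mathtt{i}=\ker M_{\mathtt{i},\arrowin}$'' cannot be repaired: $\ker M_{\mathtt{i},\arrowout}$ lives inside $M_\mathtt{i}$ while $N_\mathtt{i}$ lives inside $\bigoplus_{t(\alpha)=\mathtt{i}}\Lambda_\alpha\otimes M_{s(\alpha)}$, and they are genuinely different. For instance, take $M$ to be a nonzero $\Lambda_\mathtt{i}$-module with all arrows of $\overline{Q}$ acting by zero: then $\ker M_{\mathtt{i},\arrowout}=M_\mathtt{i}\neq 0$, whereas $N_\mathtt{i}=0=\Hom_{\Pi(\Lambda)}(I_\mathtt{i},M)$, as it must be since $I_\mathtt{i}\varepsilon_\mathtt{i}$ is generated by elements lying in $\varepsilon_\mathtt{j}\Pi(\Lambda)$ with $\mathtt{j}\neq\mathtt{i}$. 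A secondary caveat: your proposed reduction of $\Sigma_\mathtt{i}^-$ to $\Sigma_\mathtt{i}^+$ ``by duality, interchanging $\alpha\leftrightarrow\alpha^*$'' is not the orientation-independence isomorphism; $\Hom(I_\mathtt{i},-)$ and $I_\mathtt{i}\otimes-$ are adjoint, not related by relabelling arrows, so the two cases do need separate (parallel) computations, as in the paper.
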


\begin{proof}
We only show the statement for $\Sigma_\mathtt{i}^-$, the proof for $\Sigma_\mathtt{i}^+$ is similar, taking into account the remarks from \cite[Proof of Proposition 5.1]{BKT14}. Applying $\Lambda_\mathtt{i}\otimes_{\Pi(\Lambda)}-$ to the resolution given in Lemma \ref{bimoduleresolutionpreprojective} (changing the sign of $\Lambda_\mathtt{i}\otimes d^0$) and replacing the rightmost terms by the kernel of $\Lambda_\mathtt{i}\otimes_{\Pi(\Lambda)} \mult$,  we obtain an exact sequence (again since the former sequence is split as right $\Pi(\Lambda)$-modules) of $\Lambda_\mathtt{i}$-$\Pi(\Lambda)$-bimodules:
\[
\begin{tikzcd}
\Lambda_\mathtt{i}\otimes_{\Lambda_\mathtt{i}} \Pi(\Lambda)\arrow{r}{\partial_1}&\bigoplus_{\substack{\alpha\in \overline{Q}_1\\t(\alpha)=\mathtt{i}}}\Lambda_\alpha\otimes_{\Lambda_{s(\alpha)}} \Pi(\Lambda)\arrow{r}{\partial_0}&\varepsilon_\mathtt{i} I_\mathtt{i}\arrow{r}&0
\end{tikzcd}
\]
where $\partial_1(1\otimes 1)=\sum_{\substack{\alpha\in \overline{Q}_1\\t(\alpha)=\mathtt{i}}} c_\alpha$ and $\partial_0(x\otimes 1)=x$. Let $M\in \modu \Pi(\Lambda)$. Applying $-\otimes_{\Pi(\Lambda)} M$ to this sequence, the resulting exact sequence of left $\Lambda_\mathtt{i}$-modules can be identified with
\[
\begin{tikzcd}
M_\mathtt{i}\arrow{r}{M_{\mathtt{i},\arrowout}} &\bigoplus_{\alpha\in \overline{Q}_1} \Lambda_\alpha\otimes_{\Lambda_{s(\alpha)}} M\arrow{r}&\varepsilon_\mathtt{i} I_\mathtt{i}\otimes_{\Pi(\Lambda)} M\arrow{r}&0
\end{tikzcd}
\]
As $I_\mathtt{i}=(1-\varepsilon_\mathtt{i})\Pi(\Lambda)\oplus \varepsilon_\mathtt{i} I_\mathtt{i}$, $I_\mathtt{i}\otimes_{\Pi(\Lambda)} M$ only changes $M$ at the $\mathtt{i}$-th component, the foregoing sequence tells us that as $\Pi(\Lambda)$-modules, the two definitions coincide.
\end{proof}

For $M\in \modu \Pi(\Lambda)$, let $\sub_\mathtt{i}(M)$ be the largest submodule $U$ of $M$ such that $\varepsilon_\mathtt{i} U=U$. Dually, let $\fac_\mathtt{i}(M)$ be the largest factor module $M/U$ of $M$ such that $\varepsilon_\mathtt{i} (M/U)=M/U$. These two constructions are functorial. The following generalises \cite[Proposition 9.1 (ii), Corollary 9.2]{GLS16}.
 
\begin{cor}
\begin{enumerate}[(i)]
\item There are functorial short exact sequences 
\[
0\to \sub_\mathtt{i}\to \id\to \Sigma_\mathtt{i}^+\Sigma_\mathtt{i}^-\to 0
\]
and 
\[0\to \Sigma_\mathtt{i}^-\Sigma_\mathtt{i}^+\to \id\to \fac_\mathtt{i}\to 0.
\]
\item The functors $\Sigma_\mathtt{i}^+$ and $\Sigma_\mathtt{i}^-$ restrict to inverse equivalences $\Sigma_\mathtt{i}^+\colon \mathcal{T}_\mathtt{i}\to \mathcal{S}_\mathtt{i}$ and $\Sigma_\mathtt{i}^-\colon \mathcal{S}_\mathtt{i}\to \mathcal{T}_\mathtt{i}$ where 
\[\mathcal{T}_\mathtt{i}:=\{M\in \modu \Pi(\Lambda) | \fac_\mathtt{i}(M)=0\}\] and 
\[\mathcal{S}_\mathtt{i}:=\{M\in \modu \Pi(\Lambda)| \sub_\mathtt{i}(M)=0\}.\]
\end{enumerate}
\end{cor}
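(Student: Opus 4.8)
The plan is to deduce both parts from the two-term exact sequence of $\Lambda_\mathtt{i}$-$\Pi(\Lambda)$-bimodules established in the proof of the preceding lemma, together with the analogous sequence computing $\Hom_{\Pi(\Lambda)}(I_\mathtt{i},-)$. First I would record the companion exact sequence for $\Sigma_\mathtt{i}^+$: applying $\Hom_{\Pi(\Lambda)}(-,M)$ to the presentation
\[
\begin{tikzcd}
\Lambda_\mathtt{i}\otimes_{\Lambda_\mathtt{i}}\Pi(\Lambda)\arrow{r}{\partial_1}&\bigoplus_{\substack{\alpha\in\overline{Q}_1\\ t(\alpha)=\mathtt{i}}}\Lambda_\alpha\otimes_{\Lambda_{s(\alpha)}}\Pi(\Lambda)\arrow{r}{\partial_0}&\varepsilon_\mathtt{i} I_\mathtt{i}\arrow{r}&0
\end{tikzcd}
\]
and using $I_\mathtt{i}=(1-\varepsilon_\mathtt{i})\Pi(\Lambda)\oplus\varepsilon_\mathtt{i} I_\mathtt{i}$, one gets that $\Sigma_\mathtt{i}^+(M)$ differs from $M$ only in the $\mathtt{i}$-th component, where it is $\ker\bigl(\bigoplus_{t(\alpha)=\mathtt{i}}\Lambda_\alpha\otimes M_{s(\alpha)}\xrightarrow{M_{\mathtt{i},\arrowin}} M_\mathtt{i}\bigr)$ up to the image of $M_{\mathtt{i},\arrowout}$ — which is exactly $N_\mathtt{i}$ in the kernel description; this is the $\Sigma_\mathtt{i}^+$-analogue of the sequence in the lemma, and it also reproves the coincidence of the two definitions of $\Sigma_\mathtt{i}^+$.

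For part (i), the first sequence comes from applying $-\otimes_{\Pi(\Lambda)}M$ to the short exact sequence $0\to I_\mathtt{i}\to\Pi(\Lambda)\to\Lambda_\mathtt{i}\to 0$ of right $\Pi(\Lambda)$-modules: since $\Lambda_\mathtt{i}=\Pi(\Lambda)/I_\mathtt{i}$ and $\Lambda_\mathtt{i}\otimes_{\Pi(\Lambda)}M=\varepsilon_\mathtt{i}M$ with trivial non-$\mathtt{i}$ components, the right-exact sequence reads $\mathrm{Tor}_1^{\Pi(\Lambda)}(\Lambda_\mathtt{i},M)\to \Sigma_\mathtt{i}^-(M)\to M\to \sub'_\mathtt{i}(M)\to 0$ for a suitable $\mathtt{i}$-supported quotient; one checks the image of $\Sigma_\mathtt{i}^-(M)\to M$ is precisely the kernel of $M\to\fac_\mathtt{i}(M)$, so $\Sigma_\mathtt{i}^-\Sigma_\mathtt{i}^+\hookrightarrow\id$ with cokernel $\fac_\mathtt{i}$ — here one uses that $\Sigma_\mathtt{i}^-$ applied to $\Sigma_\mathtt{i}^+(M)$ re-glues the $\mathtt{i}$-component via the defining exact sequences, and a direct diagram chase with the kernel/cokernel presentations identifies the composite with the canonical inclusion of $\ker(M\to\fac_\mathtt{i}M)$. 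The second sequence $0\to\sub_\mathtt{i}\to\id\to\Sigma_\mathtt{i}^+\Sigma_\mathtt{i}^-\to 0$ is obtained dually by applying $\Hom_{\Pi(\Lambda)}(-,M)$ to $0\to I_\mathtt{i}\to\Pi(\Lambda)\to\Lambda_\mathtt{i}\to 0$, giving $0\to\Hom(\Lambda_\mathtt{i},M)\to M\to\Sigma_\mathtt{i}^+(M)\to\Ext^1(\Lambda_\mathtt{i},M)$, and observing $\Hom_{\Pi(\Lambda)}(\Lambda_\mathtt{i},M)=\sub_\mathtt{i}(M)$ while the map $M\to\Sigma_\mathtt{i}^+\Sigma_\mathtt{i}^-(M)$ is surjective because the relevant $\Ext^1$-term is absorbed — again unwound through the explicit component formulas, where on components $\mathtt{j}\neq\mathtt{i}$ the map is the identity and on the $\mathtt{i}$-component it is the canonical surjection $M_\mathtt{i}\to M_\mathtt{i}/\sub_\mathtt{i}(M)_\mathtt{i}$.

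For part (ii), note $\fac_\mathtt{i}(M)=0$ exactly means $M_{\mathtt{i},\arrowout}$ is surjective (equivalently $N_\mathtt{i}=\Coker M_{\mathtt{i},\arrowout}=0$ in the cokernel presentation), and $\sub_\mathtt{i}(M)=0$ means $M_{\mathtt{i},\arrowin}$ is injective. On $\mathcal{T}_\mathtt{i}$, the first sequence of (i) degenerates to an isomorphism $\Sigma_\mathtt{i}^-\Sigma_\mathtt{i}^+\cong\id$, and one checks $\Sigma_\mathtt{i}^+$ lands in $\mathcal{S}_\mathtt{i}$: by construction $\Sigma_\mathtt{i}^+(M)_\mathtt{i}=N_\mathtt{i}=\ker M_{\mathtt{i},\arrowin}$ and the incoming map at $\mathtt{i}$ for $\Sigma_\mathtt{i}^+(M)$ is the monomorphism $(N_\alpha)^T$, so $\sub_\mathtt{i}\Sigma_\mathtt{i}^+(M)=0$. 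Symmetrically, on $\mathcal{S}_\mathtt{i}$ the second sequence gives $\Sigma_\mathtt{i}^+\Sigma_\mathtt{i}^-\cong\id$ and $\Sigma_\mathtt{i}^-$ maps $\mathcal{S}_\mathtt{i}$ into $\mathcal{T}_\mathtt{i}$ since $\Sigma_\mathtt{i}^-(M)_\mathtt{i}=\Coker M_{\mathtt{i},\arrowout}$ with outgoing map the epimorphism $(N_\alpha)$. Hence $\Sigma_\mathtt{i}^{\pm}$ restrict to mutually inverse equivalences between $\mathcal{T}_\mathtt{i}$ and $\mathcal{S}_\mathtt{i}$.

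The main obstacle I anticipate is part (i): producing genuinely \emph{functorial} short exact sequences and verifying that the connecting maps land where claimed, i.e. that the natural transformation $\id\to\Sigma_\mathtt{i}^+\Sigma_\mathtt{i}^-$ is surjective and $\Sigma_\mathtt{i}^-\Sigma_\mathtt{i}^+\to\id$ injective. The cleanest route is to stay at the level of the bimodule presentation from Lemma \ref{bimoduleresolutionpreprojective} — tensoring/homming a fixed projective presentation of $I_\mathtt{i}$ (or of $\Lambda_\mathtt{i}=\Pi(\Lambda)/I_\mathtt{i}$) makes functoriality automatic — and only at the very end translate into the componentwise kernel/cokernel descriptions to read off that the sub/quotient appearing is $\sub_\mathtt{i}$ respectively $\fac_\mathtt{i}$; the vanishing of the stray $\Ext^1$/$\mathrm{Tor}_1$ contributions follows because $I_\mathtt{i}$ is generated in the complement of $\varepsilon_\mathtt{i}$ together with $\varepsilon_\mathtt{i}I_\mathtt{i}$, which is all supported away from making the $\mathtt{i}$-component larger than $N_\mathtt{i}$.
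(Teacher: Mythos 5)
Your outline follows the same route as the source the paper defers to (the paper's entire proof is ``the proof of \cite[Proposition~9.1]{GLS16} applies verbatim'', i.e.\ the adjunction $(\Sigma_\mathtt{i}^-,\Sigma_\mathtt{i}^+)=(I_\mathtt{i}\otimes_{\Pi(\Lambda)}-,\Hom_{\Pi(\Lambda)}(I_\mathtt{i},-))$ plus the componentwise descriptions), but as written it has two genuine problems. First, your characterisations of $\sub_\mathtt{i}$ and $\fac_\mathtt{i}$ are swapped: since $I_\mathtt{i}$ is the annihilator of $\Lambda_\mathtt{i}$, one has $\fac_\mathtt{i}(M)=M/I_\mathtt{i}M=\Coker(M_{\mathtt{i},\arrowin})$ and $\sub_\mathtt{i}(M)=\{m\in\varepsilon_\mathtt{i}M\mid I_\mathtt{i}m=0\}=\ker(M_{\mathtt{i},\arrowout})$, so $\fac_\mathtt{i}(M)=0$ means $M_{\mathtt{i},\arrowin}$ is \emph{surjective} and $\sub_\mathtt{i}(M)=0$ means $M_{\mathtt{i},\arrowout}$ is \emph{injective} --- not what you state at the start of part (ii) (and the inclusion $(N_\alpha)^T\colon N_\mathtt{i}\hookrightarrow\bigoplus\Lambda_\alpha\otimes M_{s(\alpha)}$ is the new \emph{outgoing} map of $\Sigma_\mathtt{i}^+(M)$ at $\mathtt{i}$, not the incoming one). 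Your conclusions in (ii) happen to be correct, but only because the errors cancel; as reasoning it does not stand.

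Second, and more seriously, the core of part (i) is not established. Applying $-\otimes_{\Pi(\Lambda)}M$ and $\Hom_{\Pi(\Lambda)}(-,M)$ to $0\to I_\mathtt{i}\to\Pi(\Lambda)\to\Lambda_\mathtt{i}\to 0$ yields the exact sequences
\[
\Sigma_\mathtt{i}^-(M)\to M\to\fac_\mathtt{i}(M)\to 0,\qquad 0\to\sub_\mathtt{i}(M)\to M\to\Sigma_\mathtt{i}^+(M)\to\Ext^1_{\Pi(\Lambda)}(\Lambda_\mathtt{i},M)\to 0,
\]
whose outer nontrivial terms are $\Sigma_\mathtt{i}^\pm(M)$, \emph{not} the composites $\Sigma_\mathtt{i}^-\Sigma_\mathtt{i}^+(M)$ and $\Sigma_\mathtt{i}^+\Sigma_\mathtt{i}^-(M)$ that appear in the statement. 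What actually has to be proved is that the counit $\Sigma_\mathtt{i}^-\Sigma_\mathtt{i}^+(M)\to M$ is injective with image $I_\mathtt{i}M=\ker(M\to\fac_\mathtt{i}M)$, and that the unit $M\to\Sigma_\mathtt{i}^+\Sigma_\mathtt{i}^-(M)$ is surjective with kernel $\sub_\mathtt{i}(M)$; this is exactly where the explicit kernel/cokernel descriptions of $\Sigma_\mathtt{i}^\pm$ from the preceding lemma must be used, and it is the step you dismiss with ``one checks'' and ``the relevant $\Ext^1$-term is absorbed''. Note that $\Ext^1_{\Pi(\Lambda)}(\Lambda_\mathtt{i},M)$ and the corresponding first Tor group do \emph{not} vanish in general (otherwise $M\to\Sigma_\mathtt{i}^+(M)$ itself would be surjective, which is false), so no vanishing argument is available; one must instead compute the composites componentwise and exhibit the natural maps. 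Once part (i) is genuinely in place, your deduction of part (ii) --- the sequences degenerate on $\mathcal{T}_\mathtt{i}$ resp.\ $\mathcal{S}_\mathtt{i}$, and $\Sigma_\mathtt{i}^+$ always lands in $\mathcal{S}_\mathtt{i}$, $\Sigma_\mathtt{i}^-$ always in $\mathcal{T}_\mathtt{i}$ --- is the standard and correct argument.
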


\begin{proof}
The proof given for \cite[Proposition 9.1]{GLS16} applies verbatim.
\end{proof}

As in the classical case, we can restrict $\Sigma_\mathtt{i}^+$ (resp. $\Sigma_\mathtt{i}^-$) to $\modu T(\Lambda)$ provided $\mathtt{i}$ is a sink (respectively a source) in $Q$. Let $s_\mathtt{i}(Q)$ be the quiver with vertex set $Q_0$ and arrows $\{\alpha \in Q_1| t(\alpha)\neq \mathtt{i}\} \cup \{\beta^*\colon \mathtt{i}\to \mathtt{j} | (\beta\colon \mathtt{j}\to \mathtt{i})\in Q_1\}$ (respectively $\{\alpha\in Q_1| s(\alpha)\neq \mathtt{i}\}\cup \{\beta^*\colon \mathtt{j}\to \mathtt{i} | (\beta\colon \mathtt{i}\to \mathtt{j})\in Q_1\}$). Define the dualisable pro-species of algebras $s_\mathtt{i}(\Lambda)\colon s_\mathtt{i}(Q)\to \Algpro$ as follows:
\begin{align*}
(s_\mathtt{i}(\Lambda))_\mathtt{j}&:=\Lambda_\mathtt{j}&\text{ for all $\mathtt{j}\in Q_0$,}\\
(s_\mathtt{i}(\Lambda))_\alpha&:=\Lambda_\alpha&\text{if $t(\alpha)\neq \mathtt{i}$ (respectively $s(\alpha)\neq \mathtt{i}$),}\\
(s_\mathtt{i}(\Lambda))_{\beta^*}&:=\Hom_{\Lambda_\mathtt{i}}(\Lambda_\beta,\Lambda_\mathtt{i})&\text{if $t(\beta)=\mathtt{i}$ (respectively $t(\beta)= \mathtt{i}$).}
\end{align*}

Define the \emphbf{reflection functor} $F_\mathtt{i}^+\colon \rep(\Lambda)\to \rep(s_\mathtt{i}(\Lambda))$ (respectively $F_\mathtt{i}^-\colon \rep(\Lambda)\to \rep(s_\mathtt{i}(\Lambda))$) as follows. Let $M\in \rep(\Lambda)$. 
Define $N_\mathtt{i}:=\ker(M_{\mathtt{i},\arrowin})$ (respectively $N_\mathtt{i}:=\coker(M_{\mathtt{i},\arrowout})$). Then, there is an exact sequence 
\[
\begin{tikzcd}0\arrow{r} &N_\mathtt{i}\arrow{r}{(N_\alpha)^T_\alpha} &\bigoplus_{\substack{\alpha\in Q_1\\t(\alpha)=\mathtt{i}}}\Lambda_\alpha\otimes M_{s(\alpha)}\arrow{r}{(M_\alpha)_{\alpha}} &M_\mathtt{i}\arrow{r} &0
\end{tikzcd}
\]
(respectively an exact sequence 
\[
\begin{tikzcd}
0\arrow{r} &M_\mathtt{i}\arrow{r}{(M_\alpha^\vee)^T_\alpha} &\bigoplus_{\substack{\alpha\in Q_1\\s(\alpha)=\mathtt{i}}}\Lambda_{\alpha^*}\otimes M_{t(\alpha)}\arrow{r}{(N_{\alpha})_{\alpha}} &N_\mathtt{i}\arrow{r} &0).
\end{tikzcd}
\]
Then, define the representation $F_\mathtt{i}^+(M)$ by
\begin{align*}
F_\mathtt{i}^+(M)_j&:=\begin{cases}M_\mathtt{j}&\text{if $\mathtt{j}\neq \mathtt{i}$,}\\N_\mathtt{i}&\text{if $\mathtt{j}=\mathtt{i}$,}\end{cases}\\
F_\mathtt{i}^+(M)_\alpha&:=M_\alpha&\text{if $t(\alpha)\neq \mathtt{i}$,}\\
F_\mathtt{i}^+(M)_{\beta^*}&:=N_\beta^{\wedge}&\text{if $t(\beta)=\mathtt{i}$.}
\end{align*}
(respectively the representation $F_\mathtt{i}^-(M)$ by
\begin{align*}
F_\mathtt{i}^-(M)_\mathtt{j}&:=\begin{cases}M_\mathtt{j}&\text{if $\mathtt{j}\neq \mathtt{i}$,}\\N_\mathtt{i}&\text{if $\mathtt{j}=\mathtt{i}$,}\end{cases}\\
F_\mathtt{i}^-(M)_\alpha&:=M_\alpha&\text{if $s(\alpha)\neq \mathtt{i}$,}\\
F_\mathtt{i}^-(M)_{\beta^*}&:=N_\beta&\text{if $s(\beta)=\mathtt{i}$.})
\end{align*}
For a morphism $f\colon M\to \tilde{M}$, $F_\mathtt{i}^+(f)$ is defined by the universal property of the kernel (respectively $F_\mathtt{i}^-(f)$ is defined by the universal property of the cokernel).

It is proven in \cite[Section 5]{LY15} that if $\Lambda$ is a locally Frobenius dualisable species of algebras, then $F_\mathtt{i}$ is given by tilting at an analogue of an APR tilting module. In a forthcoming paper, joint with Chrysostomos Psaroudakis, we study conditions under which the module $I_\mathtt{i}$, giving rise to the reflection functors $(\Sigma_\mathtt{i}^+,\Sigma_\mathtt{i}^-)$ on the preprojective algebra is a tilting module. Furthermore, in joint work with Chrysostomos Psaroudakis and Nan Gao, we use reflection functors in the study of a generalisation of the submodule category as studied by Ringel and Schmidmeier, see e.g. \cite{RS08}.

\section{The separated pro-species}
\label{sec:separated}

In this section, we show that the well known results on the seperated quiver of a radical square zero algebra (see e.g. \cite[Section 10]{DR75}, \cite[Section X.2]{ARS95}) extend to our setting. 

\begin{defn}
Let $\Lambda\colon Q\to \Algpro$ be a pro-species of algebras. Define $\rep_{l.p.}^{epi}(\Lambda)$ to be the full subcategory of $\rep_{l.p.}$ such that $M_{\mathtt{i},\arrowin}$ is an epimorphism for all $\mathtt{i}\in Q_0$. 
\end{defn}

\begin{lem}\label{separateddecomposes}
Let $\Lambda\colon Q\to \Algpro$ be a locally selfinjective pro-species such that $\Lambda_\alpha\neq 0$ implies that $\Lambda_\beta\neq 0$ for each $\beta$ with $t(\beta)=s(\alpha)$. Then every object in $\rep_{l.p.}$ decomposes into a direct sum of objects in $\rep_{l.p.}^{epi}(\Lambda)$ and objects isomorphic to projective $\Lambda_\mathtt{i}$-modules regarded as $T(\Lambda)$-modules for some $\mathtt{i}\in Q_0$. 
\end{lem}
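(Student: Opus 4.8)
The plan is to peel off from $M$, one vertex at a time, a piece which sits over a single vertex $\mathtt{i}$, is projective there, and is not reached by any arrow, and to arrange that what survives has all incoming maps surjective. The two hypotheses enter as follows. Local selfinjectivity makes every $M_{\mathtt{i}}$ at once a projective and an injective $\Lambda_{\mathtt{i}}$-module, and likewise makes the incoming module $N_{\mathtt{i}}:=\bigoplus_{t(\alpha)=\mathtt{i}}\Lambda_{\alpha}\otimes_{\Lambda_{s(\alpha)}}M_{s(\alpha)}$ injective (it is a direct summand of a finite direct sum of copies of the $\Lambda_{\alpha}$, since each $M_{s(\alpha)}$ is $\Lambda_{s(\alpha)}$-projective). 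The combinatorial condition ``$\Lambda_{\alpha}\neq 0\Rightarrow\Lambda_{\beta}\neq 0$ for all $\beta$ with $t(\beta)=s(\alpha)$'' says exactly that a vertex possessing a zero incoming arrow must be a sink of the support subquiver $Q'\subseteq Q$ spanned by the arrows $\alpha$ with $\Lambda_{\alpha}\neq 0$; equivalently, every non-sink of $Q'$ receives only arrows belonging to $Q'$. Note also that only $Q'$ matters, since $M_{\alpha}=0$ whenever $\Lambda_{\alpha}=0$.

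I would induct on the number of arrows of $Q'$. If $Q'$ has none, every structure map of $M$ vanishes, $\rep_{l.p.}(\Lambda)$ consists of the representations $\bigoplus_{\mathtt{i}}M_{\mathtt{i}}$ with each $M_{\mathtt{i}}$ a projective $\Lambda_{\mathtt{i}}$-module, $\rep_{l.p.}^{epi}(\Lambda)$ consists only of the zero representation, and the displayed decomposition exhibits $M$ as a sum of projective $\Lambda_{\mathtt{i}}$-modules at single vertices. For the inductive step choose a sink $\mathtt{i}$ of $Q'$. Every arrow out of $\mathtt{i}$ has $\Lambda_{\alpha}=0$, hence $\Lambda_{\alpha}\otimes_{\Lambda_{\mathtt{i}}}(-)=0$, so \emph{any} $\Lambda_{\mathtt{i}}$-submodule of $M_{\mathtt{i}}$, placed at $\mathtt{i}$ with zero maps, is a subrepresentation of $M$. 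Set $K_{\mathtt{i}}:=\im(M_{\mathtt{i},\arrowin})\subseteq M_{\mathtt{i}}$ and, using that $M_{\mathtt{i}}$ is injective over $\Lambda_{\mathtt{i}}$, pick an injective hull $E_{\mathtt{i}}$ of $K_{\mathtt{i}}$ inside $M_{\mathtt{i}}$; being injective it is a direct summand, $M_{\mathtt{i}}=E_{\mathtt{i}}\oplus P_{\mathtt{i}}$ with $P_{\mathtt{i}}$ again projective. The projection $M_{\mathtt{i}}\twoheadrightarrow P_{\mathtt{i}}$ annihilates $K_{\mathtt{i}}\subseteq E_{\mathtt{i}}$, hence annihilates the image of every arrow into $\mathtt{i}$, so it extends by zero at the other vertices to a morphism of representations $M\to P_{\mathtt{i}}$ that is the identity on the subrepresentation $P_{\mathtt{i}}$ sitting at $\mathtt{i}$. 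Thus $P_{\mathtt{i}}$ splits off, and after replacing $M$ by the complement we may assume $M_{\mathtt{i}}=E_{\mathtt{i}}$, i.e. that $K_{\mathtt{i}}$ is essential in $M_{\mathtt{i}}$.

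Now delete the sink $\mathtt{i}$ and apply the inductive hypothesis to the restricted pro-species on $Q\setminus\{\mathtt{i}\}$, whose combinatorial condition still holds. The remaining --- and, I expect, genuinely delicate --- task is to reattach the vertex $\mathtt{i}$: one must check that the single-vertex projective summands produced at the other vertices are still direct summands of the whole $M$, and that ``$K_{\mathtt{i}}$ essential'' can be upgraded to ``$K_{\mathtt{i}}=M_{\mathtt{i}}$'', so that the surviving part lands in $\rep_{l.p.}^{epi}(\Lambda)$. This is where the combinatorial hypothesis must do its work: because every arrow into a non-sink of $Q'$ lies in $Q'$, a projective summand split off at a vertex $\mathtt{j}$ cannot be pulled into $M_{\mathtt{i}}$ by a nonzero arrow $\mathtt{j}\to\mathtt{i}$, so it survives reattachment; and once every summand isomorphic to a projective $\Lambda_{s(\alpha)}$-module has also been removed from the source modules, the induced map from the reduced (still injective) incoming module onto its image in $M_{\mathtt{i}}=E_{\mathtt{i}}$ is a morphism of injective $\Lambda_{\mathtt{i}}$-modules whose kernel one wants to show is a direct summand; then the image is injective, essential in $E_{\mathtt{i}}$, hence all of $E_{\mathtt{i}}$. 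The main obstacle is therefore the bookkeeping of how the decompositions at the various vertices interact through the arrows, and the verification that this kernel is split; the single-vertex arguments above are routine once local selfinjectivity is in hand.
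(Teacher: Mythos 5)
Your argument is not a proof: the decisive step --- reattaching the sink after the inductive hypothesis has been applied to $Q\setminus\{\mathtt{i}\}$ --- is exactly the part you defer as ``genuinely delicate'', and it is where the strategy breaks down. The sub-claim you invoke there, that a projective summand split off at a vertex $\mathtt{j}$ ``cannot be pulled into $M_{\mathtt{i}}$ by a nonzero arrow $\mathtt{j}\to\mathtt{i}$'', has no justification and is false in the generality you allow. Concretely, under your reading of the hypothesis (every non-sink of the support quiver receives only support arrows) the statement itself fails: take $Q=\mathtt{1}\to\mathtt{2}\to\mathtt{3}$ with all $\Lambda_{\mathtt{i}}=\Lambda_\alpha=\mathbbm{k}$, which satisfies the displayed condition and is locally selfinjective, and the locally projective representation $0\to\mathbbm{k}\xrightarrow{\sim}\mathbbm{k}$. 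It is indecomposable, not in $\rep_{l.p.}^{epi}(\Lambda)$ (the map into vertex $\mathtt{2}$ is not onto), and not concentrated at one vertex. Running your induction on it, the summand $S_{\mathtt{2}}$ produced after deleting the sink $\mathtt{3}$ is mapped isomorphically into $M_{\mathtt{3}}$ by the arrow $\mathtt{2}\to\mathtt{3}$ and is not a summand of $M$; so no bookkeeping or splitting argument can close the gap, because the claim being proved is false in that setting.

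What makes the lemma work, and what the paper's proof actually uses, is that the hypothesis is meant to force the support quiver (after discarding arrows with $\Lambda_\alpha=0$, which changes nothing) to be \emph{bipartite}: a source of a nonzero arrow receives no nonzero arrows --- in the intended application to the separated pro-species this holds automatically. With bipartiteness the proof is a single global construction rather than an induction: set $X_{\mathtt{i}}:=\image M_{\mathtt{i},\arrowin}$ at the vertices receiving arrows and $X_{\mathtt{i}}:=M_{\mathtt{i}}$ at the sources. Then $X$ is a subrepresentation, and it lies in $\rep_{l.p.}^{epi}(\Lambda)$ precisely because the source vertices keep all of $M_{\mathtt{i}}$ (this is the step that fails for $\mathtt{1}\to\mathtt{2}\to\mathtt{3}$). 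Local selfinjectivity plus local projectivity splits each inclusion $X_{\mathtt{i}}\subseteq M_{\mathtt{i}}$; the complements are nonzero only at sinks, so together with zero structure maps they form a complementary subrepresentation $C$ with $M=X\oplus C$, and $C$, being locally projective with all structure maps zero, is a direct sum of projective $\Lambda_{\mathtt{i}}$-modules placed at single vertices. No injective-hull or essentiality argument is needed, and none of the interaction between decompositions at different vertices that worries you arises, because in the bipartite situation arrows never leave a vertex at which a complement has been split off.
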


\begin{proof}
Without loss of generality we can assume that $Q$ is a bipartite quiver. Otherwise we can replace $Q$ with $\tilde{Q}$ where all the arrows with $\Lambda_\alpha=0$ are removed. Defining $\tilde{\Lambda}$ by $\tilde{\Lambda}_\mathtt{i}=\Lambda_{\mathtt{i}}$ for all $\mathtt{i}$ and $\tilde{\Lambda}_\alpha:=\Lambda_\alpha$ for $\alpha$ with $\Lambda_\alpha\neq 0$ one obtains an equivalence between $\rep_{l.p.}(\Lambda)$ and $\rep_{l.p.}(\tilde{\Lambda})$. For such a quiver let $I_0$ be the vertices such that there is $\alpha$ with $t(\alpha)\in I_0$ and $I_1:=Q_0\setminus I_0$. 

Let $M\in \modu_{l.p.}\Gamma$ be an arbitrary representation. Let $X$ be the $\Lambda$-representation with $X_\mathtt{i}:=\image M_{\mathtt{i},\arrowin}$ for $\mathtt{i}\in I_0$ and $X_\mathtt{i}:=M_{\mathtt{i}}$ otherwise. Define $X_\alpha$ to be the map induced by $M_\alpha$ for all $\alpha$. Since $\Lambda$ is locally selfinjective and $M$ is locally projective, the inclusion $X_\mathtt{i}\to M_\mathtt{i}$ splits. Thus, $X\in \rep_{l.p.}^{epi}(\Lambda)$. Furthermore $X$ is a subrepresentation of $M$ and we can form the quotient $N:=M/X$ which is easily seen to satisfy $N_\alpha=0$ for all $\alpha$. Since $N$ is also locally projective, it follows that $N$ is isomorphic to a direct summand of a direct sum of $\Lambda_\mathtt{i}$ for some $\mathtt{i}\in Q_0$. 
\end{proof}

\begin{rmk}
Without the assumption of $\Lambda$ being locally selfinjective, the result is not true as one can see from the example of $Q=\mathbb{A}_2=1\stackrel{\alpha}{\to} 2$, $\Lambda_\mathtt{i}:=k\mathbb{A}_2$ for $\mathtt{i}\in \{1,2\}$ and $\lambda_\alpha:=\Lambda_\mathtt{1}$. Then the representation $M$ with $M_\mathtt{1}=P_2=S_2$ and $M_{\mathtt{2}}=P_1$ and $M_\alpha$ being the natural inclusion is indecomposable, but neither in $\rep_{l.p.}^{epi}(\Lambda)$ nor isomorphic to $\Lambda_{\mathtt{i}}$ for some $\mathtt{i}$.
\end{rmk}

The assumptions of Lemma \ref{separateddecomposes} are in particular satisfied for the separated pro-species:

\begin{defn}
Let $\Lambda\colon Q\to \Algpro$ be a pro-species of algebras with $Q$ not necessarily acyclic. The \emphbf{separated pro-\-spe\-cies} $\Lambda^s$ associated to $\Lambda$ is defined as follows. Let $Q^s$ be the quiver with vertex set being the disjoint union $Q_0 \coprod Q_0$ where we denote a vertex in the latter set by $\overline{\mathtt{i}}$ for $\mathtt{i}\in Q_0$, and arrows $\overline{\alpha}\colon \mathtt{i}\to \overline{\mathtt{j}}$ for each arrow $\alpha\colon \mathtt{i}\to \mathtt{j}$ in $Q$. Let $\Lambda^s\colon Q^s\to \Algpro$ be the pro-species with $\Lambda^s_\mathtt{i}:=\Lambda^s_{\overline{\mathtt{i}}}:=\Lambda_\mathtt{i}$ and $\Lambda^s_{\overline{\alpha}}:=\Lambda_\alpha$. 
\end{defn}

The following proposition is essentially proven as in the classical case (see e.g. \cite[Lemma X.2.1]{ARS95}) replacing $\rad{T(\Lambda)}$ by $T(\Lambda)_{\geq 1}$. We provide it for convenience of the reader.

\begin{prop}
Let $\Lambda\colon Q\to \Algpro$ be a locally selfinjective pro-species of algebras with $Q$ not necessarily acyclic. Let $\Lambda^s$ be the separated pro-species associated to $\Lambda$. Let $T(\Lambda)$ be equipped with the tensor grading, i.e. $\Lambda_{\mathtt{i}}$ is in degree $0$ and $\Lambda_\alpha$ is in degree $1$ for $\alpha\in Q_1$. Let $T(\Lambda)_{\geq 2}$ be the part of degrees greater or equal than $2$. Let $\Gamma:=T(\Lambda)/T(\Lambda)_{\geq 2}$.   Then the functor 
\[F\colon \modu_{l.p.}\Gamma:=\{X\in \modu \Gamma\, |\, X\in \rep_{l.p.} \Lambda\}\to \rep_{l.p.}^{epi}\Lambda^s,\]
given on objects by $F(M)_{\overline{\mathtt{i}}}=\image M_{\mathtt{i},\arrowin}$ and $F(M)_\mathtt{i}=M_\mathtt{i}/\image M_{\mathtt{i},\arrowin}$ and $F(M)_\alpha$ being induced by $M_\alpha$, is full, dense and a representation embedding, i.e. it preserves indecomposability and reflects isomorphisms. 
\end{prop}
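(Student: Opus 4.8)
The plan is to follow the classical Auslander--Reiten--Smalø argument for the separated quiver, keeping track of the tensor grading in place of the radical filtration. First I would unwind the definition of $\Gamma = T(\Lambda)/T(\Lambda)_{\geq 2}$: a $\Gamma$-module is exactly a $T(\Lambda)$-module $M$ on which all composites $\Lambda_\beta \otimes \Lambda_\alpha$ act as zero, i.e. a $\Lambda$-representation $M$ with $M_\beta \circ (\Lambda_\beta \otimes M_\alpha) = 0$ whenever $t(\alpha)=s(\beta)$. Equivalently, $\image(M_{\mathtt{i},\arrowout})\subseteq \ker(M_{\mathtt{i},\arrowin})$ in a suitable sense; the key point I would isolate is that, for such an $M$, the subspace $\image M_{\mathtt{i},\arrowin}\subseteq M_\mathtt{i}$ is annihilated by all arrows out of $\mathtt{i}$, so that setting $F(M)_{\overline{\mathtt{i}}} = \image M_{\mathtt{i},\arrowin}$ and $F(M)_\mathtt{i} = M_\mathtt{i}/\image M_{\mathtt{i},\arrowin}$, with $F(M)_{\overline\alpha}$ the map induced by $M_\alpha\colon \Lambda_\alpha\otimes M_{s(\alpha)}\to M_{t(\alpha)}$ (which lands in $\image M_{t(\alpha),\arrowin}$ by construction), genuinely gives a $\Lambda^s$-representation. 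Local projectivity is inherited because $\Lambda$ is locally selfinjective: $\image M_{\mathtt{i},\arrowin}$ is a submodule of the injective--projective $\Lambda_\mathtt{i}$-module $M_\mathtt{i}$, hence a direct summand, hence projective, and the quotient likewise. Finally $F(M)\in\rep^{epi}_{l.p.}\Lambda^s$ is immediate at the vertices $\overline{\mathtt{i}}$ since $\image M_{\mathtt{i},\arrowin}$ is by definition the image of the corresponding map; at the vertices $\mathtt{i}$ there are no incoming arrows in $Q^s$, so the epi condition is vacuous. Functoriality on morphisms is the evident passage to submodule and quotient.

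For \emph{density}, given $N\in\rep^{epi}_{l.p.}\Lambda^s$ I would build a $\Gamma$-module $M$ by $M_\mathtt{i} := N_\mathtt{i}\oplus N_{\overline{\mathtt{i}}}$ and $M_\alpha\colon \Lambda_\alpha\otimes M_{s(\alpha)}\to M_{t(\alpha)}$ given by $N_{\overline\alpha}$ composed with the projection $M_{s(\alpha)}\twoheadrightarrow N_{s(\alpha)}$ and the inclusion $N_{\overline{t(\alpha)}}\hookrightarrow M_{t(\alpha)}$. One checks $M$ is a $\Gamma$-module (any composite of two arrow-actions is zero because the target of the first lands in an $N_{\overline{\mathtt{j}}}$-summand, which the second arrow kills), that it is locally projective, and that $\image M_{\mathtt{i},\arrowin} = N_{\overline{\mathtt{i}}}$ precisely because $N_{\mathtt{i},\arrowin}$ is epi; hence $F(M)\cong N$. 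For \textbf{fullness}, a morphism $g\colon F(M)\to F(M')$ consists of maps on the $N_{\overline{\mathtt{i}}}$- and $N_{\mathtt{i}}$-parts compatible with the $\overline\alpha$; I would lift it to $f\colon M\to M'$ componentwise, where the issue is only the choice of a lift $M_\mathtt{i}\to M'_\mathtt{i}$ of the pair (sub, quotient) — here I use that $\image M_{\mathtt{i},\arrowin}$ splits off $M_\mathtt{i}$ as a $\Lambda_\mathtt{i}$-module (local selfinjectivity again), pick the induced splitting-based lift, and then verify the lift commutes with all the $M_\alpha$ because on the sub- and quotient-parts it does and the $\Gamma$-relations force the cross terms to vanish. (If naive splitting does not commute with $\Gamma$-structure I would instead argue as in \cite[Lemma X.2.1]{ARS95}, correcting the lift by a homomorphism into $\image M'_{\mathtt{i},\arrowin}$.) That $F$ \emph{reflects isomorphisms} follows since an $F$-iso is bijective on all graded pieces of $M$ (as $M_\mathtt{i} = \image M_{\mathtt{i},\arrowin}\oplus (\text{complement})$ and $F$ sees both), so $f$ is bijective; that $F$ \emph{preserves indecomposability} is then formal: if $F(M) = N_1\oplus N_2$ nontrivially, lift the two idempotents through fullness to orthogonal idempotents of $\End(M)$ (using that $F$ is full and reflects isos to see the lifted idempotents are again a nontrivial orthogonal decomposition), contradicting indecomposability of $M$.

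The main obstacle I anticipate is fullness, specifically showing that a componentwise lift of $g$ can be chosen $T(\Lambda)$-linear. The splitting $M_\mathtt{i} = \image M_{\mathtt{i},\arrowin}\oplus C_\mathtt{i}$ is only a $\Lambda_\mathtt{i}$-module splitting, not canonical, and need not be respected by the arrow maps $M_\alpha$; so the naive diagonal lift $g_{\overline{\mathtt{i}}}\oplus g_\mathtt{i}$ (transported along chosen splittings in $M$ and $M'$) may fail to commute with $M_\alpha$ by an error term landing in $\image M'_{\mathtt{i},\arrowin}$. The fix, exactly as in the classical separated-quiver lemma, is to observe that this error term is controlled — it defines a map factoring appropriately — and can be absorbed by adjusting the lift; I would spell out that the $\Gamma$-relation $T(\Lambda)_{\geq 2}=0$ makes the relevant obstruction space vanish, so a genuine $T(\Lambda)$-linear lift exists. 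Everything else (the decomposition statement needed nowhere here beyond Lemma \ref{separateddecomposes}, local projectivity bookkeeping, the density construction) is routine linear algebra over the selfinjective algebras $\Lambda_\mathtt{i}$.
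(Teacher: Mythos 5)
Your route is genuinely different from the paper's. The paper follows \cite[Lemma X.2.1]{ARS95}: it writes $T(\Lambda^s)$ as a triangular matrix ring, replaces each $E\in\modu_{l.p.}\Gamma$ by a pullback/pushout square built from a graded projective cover $P\to E$, proves fullness by lifting a map on tops to a map of projective covers and invoking the universal property of the pushout, and gets preservation of indecomposability from the identification $\End_{T(\Lambda^s)}(F(E))\cong\End_\Gamma(E)/\Hom_\Gamma(E,T(\Lambda)_{\geq 1}E)$. Your density construction ($M_\mathtt{i}:=N_\mathtt{i}\oplus N_{\overline{\mathtt{i}}}$ with the arrow maps threaded through the projection and inclusion) is correct and in fact more direct than the paper's pushout construction. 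However, there is a genuine gap at the step you rely on throughout: the assertion that $\image M_{\mathtt{i},\arrowin}$ is ``a submodule of the injective--projective $\Lambda_\mathtt{i}$-module $M_\mathtt{i}$, hence a direct summand, hence projective'' is a non sequitur. A submodule of an injective module is a direct summand only if the submodule is itself injective; here $\image M_{\mathtt{i},\arrowin}$ is a priori only a quotient of the projective module $\bigoplus_{t(\alpha)=\mathtt{i}}\Lambda_\alpha\otimes M_{s(\alpha)}$ sitting inside $M_\mathtt{i}$, and neither property forces injectivity (over $k[x]/(x^2)$ the socle of the regular module arises as such an image and does not split off). This claim is load-bearing three times: it is what makes $F(M)$ locally projective, it produces the complements $C_\mathtt{i}$ on which your entire fullness argument rests, and it is what lets you say an $F$-isomorphism ``is bijective on all graded pieces''. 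You must either actually prove that $\image M_{\mathtt{i},\arrowin}$ is injective over $\Lambda_\mathtt{i}$ --- this is where the force of local selfinjectivity has to enter, and it does not follow from the hypotheses the way you have used them --- or restructure the argument, as the paper does, so that no splitting is ever chosen.

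Two smaller points. First, the obstruction you hedge against in the fullness step in fact vanishes for exactly the reason you guess: once splittings $M_\mathtt{i}=\image M_{\mathtt{i},\arrowin}\oplus C_\mathtt{i}$ exist, the diagonal lift automatically commutes with every $M_\alpha$, because the $\Gamma$-relation forces $M_\alpha$ to annihilate $\Lambda_\alpha\otimes\image M_{s(\alpha),\arrowin}$ and to land inside $\image M_{t(\alpha),\arrowin}$; so no correction term is needed --- but this verification must be written out, and it still presupposes the splitting. Second, your idempotent-lifting argument for preservation of indecomposability is incomplete as stated: fullness only lets you lift an idempotent of $\End(F(M))$ to some $e$ with $F(e^2-e)=0$, and to replace $e$ by an honest idempotent you need the ideal $\ker\bigl(\End_\Gamma(M)\to\End(F(M))\bigr)$ to be nil. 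The paper supplies exactly this: that kernel is $\Hom_\Gamma(M,T(\Lambda)_{\geq 1}M)$, which squares to zero and hence lies in $\rad{\End_\Gamma(M)}$, so $\End(F(M))$ is local if and only if $\End_\Gamma(M)$ is. You should import that identification rather than appeal to fullness and reflection of isomorphisms alone.
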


\begin{proof}
Let $\mathfrak{r}$ be the $\Gamma$-module $T(\Lambda)_{\geq 1}/T(\Lambda)_{\geq 2}$. Then, there is a short exact sequence of $\Gamma$-modules $0\to \mathfrak{r}\to \Gamma\to \Gamma/\mathfrak{r}\to 0$. Tensoring this with a projective $\Gamma$-module, one obtains a short exact sequence $0\to \mathfrak{r}\otimes_\Gamma P\to P\to P/\mathfrak{r}P\to 0$. Let $E$ be a $\Gamma$-module which is locally projective when considered as a $\Lambda$-module. Note that each such module is in fact gradable. Thus its $\Gamma$-projective cover comes from a graded map $f\colon P\to E$. Thus, there is the following commutative diagram of gradable maps with exact rows and columns
\[
\begin{tikzcd}
0\arrow{r} &\mathfrak{r}P\arrow{d}{s}\arrow{r}&P\arrow{d}\arrow{r} &P/\mathfrak{r}P\arrow{r}\arrow{d}{f_1}&0\\
0\arrow{r}&\mathfrak{r}E\arrow{d}\arrow{r}&E\arrow{r}\arrow{d}&E/\mathfrak{r}E\arrow{r}&0\\
&0&0
\end{tikzcd}
\] 
Note that $f_1$ is an isomorphism since $E$ is locally projective, which results in the left square being a pullback square as well as a pushout square.

Note that $T(\Lambda^s)$ is isomorphic to the triangular matrix ring \[\begin{pmatrix}T(\Lambda)/T(\Lambda)_{\geq 1}&0\\\mathfrak{r}&T(\Lambda)/T(\Lambda)_{\geq 1}\end{pmatrix}.\]
As such, $T(\Lambda^s)$-modules are given by triples $(M,N,f)$ where $M$ and $N$ are $T(\Lambda)/T(\Lambda)_{\geq 1}$-modules and $f\colon T(\Lambda)_{\geq 1}\otimes_{T(\Lambda)_{\geq 1}} M\to N$ is a $T(\Lambda)/T(\Lambda)_{\geq 1}$-linear map. Identifying $\mathfrak{r}P$ with $\mathfrak{r}\otimes_{T(\Lambda)/T(\Lambda)_{\geq 1}} P/\mathfrak{r}P$ we obtain a commutative diagram showing that $(P/\mathfrak{r}P, \mathfrak{r}E,s)$ is isomorphic to $(E/\mathfrak{r}E,\mathfrak{r}E,h)$ where $h$ is the map induced by the multiplication map.
\[
\begin{tikzcd}
\mathfrak{r}\otimes_{T(\Lambda)/T(\Lambda)_{\geq 1}} P/\mathfrak{r}P\arrow{d}{1\otimes f_1}\arrow{r}{s}& \mathfrak{r}E\arrow{d}{\id_{\mathfrak{r}E}}\\
\mathfrak{r}\otimes_{T(\Lambda)/T(\Lambda)_{\geq 1}} E/\mathfrak{r}E\arrow{r}{h}&\mathfrak{r}E
\end{tikzcd}
\] 
We first prove that the functor $F$ is full. For this let $E$ and $E'$ be in $\modu_{l.p.} \Gamma$ and write $F(E)=(M,N,t)$ and $F(E')=(M',N',t')$. Consider a morphism $(u,v)\colon (M,N,t)\to (M',N',t')$. Let $f\colon P\to E$ and $f'\colon P'\to E'$ be projective covers. According to the foregoing remarks there are $T(\Lambda^s)$-isomorphisms $(M,N,t)\cong (P/\mathfrak{r}P,N,s)$ and $(M',N',f')\cong (P'/\mathfrak{r}P',N',s')$. It thus suffices to prove that each morphism $(u',v)\colon (P/\mathfrak{r}P,N,s)\to (P'/\mathfrak{r}P',N',s')$ comes from a morphism $w\colon E\to E'$. For such a morphism $(u',v)$ as noted before there is a pushout square
\[
\begin{tikzcd}
\mathfrak{r}\otimes_{T(\Lambda)/T(\Lambda)_{\geq 1}} P/\mathfrak{r}P\arrow{d}{s}\arrow{r} &P\arrow{d}{f}\\
N\arrow{r}&E
\end{tikzcd}
\]
Let $g\colon P\to P'$ be a lift of $u'\colon P/\mathfrak{r}P\to P'/\mathfrak{r}P'$. There is a commutative diagram
\[
\begin{tikzcd}
\mathfrak{r}P\arrow{rd}{\mathfrak{r}u'}\arrow{dd}{s}\arrow{r}&P\arrow{rd}{g}\\
&\mathfrak{r}P'\arrow{dd}{s'}\arrow{r}&P'\arrow{dd}{f'}\\
N\arrow{rd}{v}\\
&N'\arrow{r}&E'
\end{tikzcd}
\]
where the left square comes from the fact that $(u',v)$ is a homomorphism of $T(\Lambda^s)$-modules, the lower right corner is the pushout square for $(P'/\mathfrak{r}P',N',s')$, and the upper square commutes as $g$ is a lift of $u'$. Combining the latter two diagrams one gets a map $w\colon E\to E'$ by the universal property of the pushout.
That $F(w)=(u',v)$ now follows from the commutativity of the following two cubes:
\[
\begin{tikzcd}
\mathfrak{r}P\arrow{rd}{g'}\arrow{dd}{s}\arrow{rr}{i}&&P\arrow{dd}[near end]{f}\arrow{rd}{g}\\
&\mathfrak{r}P'\arrow[crossing over]{rr}[near start]{i'}&&P'\arrow{dd}{f'}\\
\mathfrak{r}E\arrow{rd}{v}\arrow[]{rr}{h}&&E\arrow{rd}{w}\\
&\mathfrak{r}E'\arrow[from=uu, crossing over]{}[near end]{s'}\arrow{rr}{h'}&&E'
\end{tikzcd}
\]
where the lower square commutes as $whs=wfi=f'gi=f'i'g'=h's'g'=h'vs$ and $s$ is an epimorphism, and
\[
\begin{tikzcd}
P\arrow{rd}{g}\arrow{dd}{f}\arrow{rr}{\pi}&&P/\mathfrak{r}P\arrow{dd}[near end]{f_1}\arrow{rd}{u'}\\
&P'\arrow[crossing over]{rr}[near start]{\pi'}&&P'/\mathfrak{r}P'\arrow{dd}{f_1'}\\
E\arrow{rd}{w}\arrow{rr}[near end]{p}&&E/\mathfrak{r}E\arrow{rd}{u}\\
&E'\arrow[from=uu, crossing over]\arrow{rr}{p'}&&E'/\mathfrak{r}E'
\end{tikzcd}
\]
where the lower square commutes as $p'wf=p'f'g=f_1'\pi'g=f_1'u'\pi=uf_1\pi=upf$ and $f$ is an epimorphism.

We claim that $F$ preserves indecomposability. For this, first note that $F(f)=0$ if and only if $f(E)\subseteq \mathfrak{r}E'$. It follows that $\End_{T(\Lambda^s)}(F(E))\cong \End_{\Gamma}(E)/\Hom_{\Gamma}(E,\mathfrak{r}E)$. Since $\mathfrak{r}^2=0$, it follows that $\Hom_{\Gamma}(E,\mathfrak{r}E)\subseteq \rad{\End_\Gamma(E)}$. In particular, $\End_{T(\Lambda^s)}(F(E))$ is local if and only if $\End_{\Gamma}(E)$ is local. Hence, $F$ preserves indecomposability.

To show that $F$ reflects isomorphisms let $E$ and $E'$ be such that $F(E)\cong F(E')$. Since $F$ is full, there exist $f\colon E\to E'$ and $g\colon E'\to E$ with $F(fg)=\id$ and $F(gf)=\id$. But then, there exists $r\in  \rad{\End_\Gamma(E)}, r'\in \rad{\End_\Gamma(E')}$ with $gf+r=1$ and $fg+r'=1$. Since $r,r'$ are nilpotent, it follows that $gf$ and $fg$, and thus $f$ and $g$ are isomorphisms.

Finally, to prove that $F$ is dense let $(M,N,t)\in \rep_{l.p.}^{epi}\Lambda^s$. Let $f\colon P\to M$ be a projective cover. Taking the pushout of $\mathfrak{r}P\to P$ along the surjective $t\colon \mathfrak{r}P\to N$ gives a $\Gamma$-module $E$. We claim that $F(E)\cong (M,N,t)$. This follows from the pushout diagram as $P\to E$ being an epimorphism gives that the induced morphism $i(N)\to E$ has image $\mathfrak{r}E$.
\end{proof}

\begin{thm}\label{stableequivalence}
Let $\Lambda$, $\Gamma$, $\Lambda^s$, and $F$ be as above. Then $F$ induces an equivalence of the corresponding stable categories $\underline{\modu}_{l.p.}\Gamma\to \underline{\rep}_{l.p.}\Lambda^s$.
\end{thm}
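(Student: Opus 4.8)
The plan is to show that $F$ descends to the stable categories and that the resulting functor is full, faithful and dense; the previous proposition supplies fullness, denseness onto $\rep_{l.p.}^{epi}\Lambda^s$ and the crucial Hom-comparison, and Lemma \ref{separateddecomposes} bridges between $\rep_{l.p.}^{epi}\Lambda^s$ and $\rep_{l.p.}\Lambda^s$. I would begin by computing $F$ on projective $\Gamma$-modules: since $\mathfrak{r}(\Gamma\varepsilon_\mathtt{i})=\bigoplus_{\alpha\in Q_1,\,s(\alpha)=\mathtt{i}}\Lambda_\alpha$ and $\Gamma\varepsilon_\mathtt{i}/\mathfrak{r}(\Gamma\varepsilon_\mathtt{i})\cong\Lambda_\mathtt{i}$, the defining formulas give $F(\Gamma\varepsilon_\mathtt{i})\cong T(\Lambda^s)\varepsilon_\mathtt{i}$; as $\Gamma\varepsilon_\mathtt{i}$ is locally projective (each $\Lambda_\alpha$ is projective over $\Lambda_{t(\alpha)}$) this takes place inside $\modu_{l.p.}\Gamma$. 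Hence $F$ sends a morphism factoring through a projective $\Gamma$-module to a morphism factoring through a projective $T(\Lambda^s)$-module and therefore induces an additive functor $\underline{F}\colon\underline{\modu}_{l.p.}\Gamma\to\underline{\rep}_{l.p.}\Lambda^s$, which is full because $F$ is.

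For denseness, the previous proposition gives that $F$ is dense onto $\rep_{l.p.}^{epi}\Lambda^s$, and Lemma \ref{separateddecomposes} (whose hypotheses hold for $\Lambda^s$: it is locally selfinjective and the implication in that lemma is vacuous because no source-copy of $Q^s$ receives an arrow) writes an arbitrary object of $\rep_{l.p.}\Lambda^s$ as a direct sum of an object of $\rep_{l.p.}^{epi}\Lambda^s$ and projective $\Lambda_\mathtt{j}$-modules concentrated at single vertices of $Q^s$. Those sitting at a sink-copy $\overline{\mathtt{j}}$ are projective $T(\Lambda^s)$-modules, since $\overline{\mathtt{j}}$ is a sink of $Q^s$ and so $T(\Lambda^s)\varepsilon_{\overline{\mathtt{j}}}=\Lambda_\mathtt{j}$, hence they vanish in $\underline{\rep}_{l.p.}\Lambda^s$; those at a source-copy lie in $\rep_{l.p.}^{epi}\Lambda^s$ and are themselves in the image of $F$. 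Thus $\underline{F}$ is dense. For faithfulness I would use the identification $\Hom_{T(\Lambda^s)}(F(E),F(E'))\cong\Hom_\Gamma(E,E')/\mathcal{N}(E,E')$, $\mathcal{N}(E,E')=\{h\colon h(E)\subseteq\mathfrak{r}E'\}$, extracted from the proof of the previous proposition: combined with the first paragraph, the kernel of $\Hom_\Gamma(E,E')\to\underline{\Hom}_{T(\Lambda^s)}(F(E),F(E'))$ is $\mathcal{P}(E,E')+\mathcal{N}(E,E')$, where $\mathcal{P}$ denotes morphisms through projective $\Gamma$-modules, so $\underline{F}$ is faithful precisely when $\mathcal{N}(E,E')\subseteq\mathcal{P}(E,E')$.

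Everything therefore reduces to the claim that a morphism $h\colon E\to E'$ in $\modu_{l.p.}\Gamma$ with $h(E)\subseteq\mathfrak{r}E'$ factors through a projective $\Gamma$-module, and I expect this to be the main obstacle. Since $\mathfrak{r}^2=0$ in $\Gamma$, such an $h$ annihilates $\mathfrak{r}E$ and hence factors through $E/\mathfrak{r}E$, and I would finish by replaying the pushout diagrams from the proof of the previous proposition together with the local selfinjectivity of $\Lambda$: selfinjectivity makes each $E_\mathtt{i}$ and $E'_\mathtt{i}$ injective over $\Lambda_\mathtt{i}$, so that submodules of the $E'_\mathtt{i}$ — in particular the components of $\mathfrak{r}E'$ — can be reached through projective $\Lambda_\mathtt{i}$-modules; this is the ingredient that replaces the semisimplicity of $\Gamma/\mathfrak{r}$ used in the classical separated-quiver argument, where $E/\mathfrak{r}E$ is automatically projective over $\Gamma/\mathfrak{r}$. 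Once the claim is proved, $\underline{F}$ is full, faithful and dense, hence an equivalence. Alternatively one may bypass this point by building an explicit quasi-inverse $\underline{G}$ on the stable categories from the pushout construction used to prove denseness in the previous proposition, the non-canonical choices of projective covers and lifts being absorbed modulo projectives; the same appeal to local selfinjectivity then reappears when verifying $\underline{G}\,\underline{F}\cong\id$ and $\underline{F}\,\underline{G}\cong\id$.
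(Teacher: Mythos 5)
Your overall route is the paper's: you descend $F$ to the stable categories via $F(\Gamma\varepsilon_\mathtt{i})\cong T(\Lambda^s)\varepsilon_\mathtt{i}$, and you get fullness and denseness from the previous proposition together with Lemma \ref{separateddecomposes}; this part is sound and agrees with the paper, which obtains preservation (and reflection) of projectivity from the triangular matrix description of $T(\Lambda^s)$. The problems are in your faithfulness reduction. You assert that the kernel of $\Hom_\Gamma(E,E')\to\underline{\Hom}_{T(\Lambda^s)}(F(E),F(E'))$ equals $\mathcal{P}(E,E')+\mathcal{N}(E,E')$ ``combined with the first paragraph'', but that paragraph only yields the inclusion $\mathcal{P}+\mathcal{N}\subseteq\ker$; for the reverse inclusion you must show that if $F(h)$ factors through a projective $T(\Lambda^s)$-module then $h\in\mathcal{P}+\mathcal{N}$. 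The paper handles exactly this point with Lemma \ref{tensoralgebrahereditary}: after discarding projective direct summands (harmless in the stable category; here one also needs that $F$ preserves indecomposability and reflects projectivity, which you never establish), a morphism in $\rep_{l.p.}^{np}\Lambda^s$ factoring through a projective is zero, so ``$F'(f)=0$'' becomes ``$F(f)=0$'', i.e. $f\in\mathcal{N}$. Your claimed identity is true, but it needs an argument of this kind (or a direct analysis splitting the projective into source-copy projectives, which are $F$ of $\Gamma$-projectives, and sink-copy projectives, maps through which only touch the barred components so that the remainder lies in $\mathcal{N}$).

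The decisive step, which you yourself flag as the main obstacle, is the inclusion $\mathcal{N}(E,E')\subseteq\mathcal{P}(E,E')$, and here your sketch does not amount to a proof. The paper's argument is: $h$ factors as $E\to E/\mathfrak{r}E\xrightarrow{\bar h}\mathfrak{r}E'\hookrightarrow E'$; choose a $\Gamma$-projective cover $g\colon P\to E'$, which induces an epimorphism $\mathfrak{r}P\to\mathfrak{r}E'$; the modules $E/\mathfrak{r}E$, $\mathfrak{r}P$, $\mathfrak{r}E'$ are annihilated by $\mathfrak{r}$, hence are modules over $\prod_\mathtt{i}\Lambda_\mathtt{i}$, and being locally projective over the locally selfinjective $\Lambda$ they are projective (equivalently injective) over this selfinjective algebra; therefore $\bar h$ lifts along the epimorphism $\mathfrak{r}P\twoheadrightarrow\mathfrak{r}E'$, and composing the lift with $\mathfrak{r}P\hookrightarrow P\xrightarrow{g}E'$ exhibits $h$ as factoring through the projective $P$. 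Note that the property actually used is projectivity of $E/\mathfrak{r}E$ over $\prod\Lambda_\mathtt{i}$ (a lifting problem along an epimorphism), not injectivity of the $E'_\mathtt{i}$ as you propose: injectivity lets you extend maps along monomorphisms, which is not the relevant problem, and ``replaying the pushout diagrams'' from the density proof does not by itself produce the required factorisation. So as written the key step of the theorem is missing, although you correctly identified where it lies and that local selfinjectivity is the ingredient that must replace the classical semisimplicity of $\Gamma/\mathfrak{r}$.
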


\begin{proof}
As $T(\Lambda^s)$ can be written as a triangular matrix ring \[\begin{pmatrix}T(\Lambda)/T(\Lambda)_{\geq 1}&0\\T(\Lambda)_{\geq 1}/T(\Lambda)_{\geq 2}&T(\Lambda)/T(\Lambda)_{\geq 1}\end{pmatrix},\] it follows that $F$ preserves and reflects projectivity, see e.g. \cite[Lemma X.2.2]{ARS95}.

It follows that $F$ restricts to a functor $F\colon \modu_{l.p.}^{np} \Gamma\to \rep_{l.p.}^{np} \Lambda^s$. Since the $\Lambda^s$-re\-pre\-sen\-ta\-tions not in $\rep^{epi}_{l.p.}\Lambda^s$ are projective, see the proof of Lemma \ref{separateddecomposes}, it follows that this restriction is dense. Since, $F$ sends projectives to projectives, it follows that $F$ induces a dense functor $F'\colon \underline{\modu}_{l.p.} \Gamma\to \underline{\rep}_{l.p.} \Lambda^s$. By the foregoing lemma, it is as well full.

According to Lemma \ref{tensoralgebrahereditary}, $\rep_{l.p.}^{np}\Lambda^s\cong \underline{\rep}_{l.p.}\Lambda^s$. Suppose that $F'(f)=0$ for some morphism $f\colon M\to N$. Then, according to the proof of the previous proposition, it is in $\Hom_\Gamma(M,T(\Lambda)_{\geq 1}N)$. Thus there is a factorisation $M\to M/T(\Lambda)_{\geq 1} M\to T(\Lambda)_{\geq 1} N\to N$ of $f$. Let $g\colon P\to N$ be a $\Gamma$-projective cover of $N$. Then, $g$ induces an epimorphism $T(\Lambda)_{\geq 1} P\to T(\Lambda)_{\geq 1} N$. Noting that $\Lambda$ is locally selfinjective and the locally projective modules $T(\Lambda)_{\geq 1} P, T(\Lambda)_{\geq 1}, M/T(\Lambda)_{\geq 1} M$ are all modules for the selfinjective algebra $\prod \Lambda_{\mathtt{i}}$, it follows that the map $M/T(\Lambda)_{\geq 1}\to T(\Lambda)_{\geq 1}N$ factors through $T(\Lambda)_{\geq 1}P$. It follows that also $f$ factors through $P$.
\end{proof}

\section{Quivers and relations for pro-species}
\label{sec:quivers}

This final section of the article provides the bridge to classical representation theory of algebras by determining Gabriel quiver and relations for the tensor algebra as well as the preprojective algebra of a pro-species. This generalises a result by Ib{\'a}{\~n}ez Cobos, Navarro and L{\'o}pez Pe{\~n}a where the case of pro-species where each $\Lambda_\alpha$ is a free bimodule is considered under the name of a ``generalised path algebra". A similar result holds if the $\Lambda_\mathtt{i}$ on the vertices are given by classical species. We leave the obvious generalisation to the reader. 

\begin{prop}[cf. {\cite[Theorem 3.3]{INL08}}]\label{quivertensoralgebra}
Let $\Lambda\colon Q\to \Algpro$ be a pro-species of algebras. Suppose $\Lambda_\mathtt{i}\cong \mathbbm{k}\tilde{Q}_\mathtt{i}/R_\mathtt{i}$ is given by a quiver $\tilde{Q}_\mathtt{i}$ and relations $R_\mathtt{i}$. Let $\pi_\alpha\colon \tilde{P}_\alpha\to \Lambda_\alpha$ be a projective cover of $\Lambda_\alpha$ as a $\Lambda_{t(\alpha)}$-$\Lambda_{s(\alpha)}$-module. Denote its kernel by $R_\alpha$. Then, $T(\Lambda)\cong \mathbbm{k}\tilde{Q}/R$ is a description by a quiver with relations, where 
\[\tilde{Q}=\bigcup_{\mathtt{i}\in Q_0} \tilde{Q}_\mathtt{i}\cup \{j\to i\, |\, \Lambda_{t(\alpha)} e_j\otimes_\mathbbm{k} e_i\Lambda_{s(\alpha)} \text{ is a direct summand of } \tilde{P}_\alpha \text{ for some }\alpha\}\]
and $R=\langle R_\mathtt{i}, R_\alpha\rangle$.
\end{prop}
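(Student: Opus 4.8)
The plan is to build an explicit surjective algebra homomorphism $\phi\colon\mathbbm{k}\tilde Q\to T(\Lambda)$ and then identify its kernel with $\langle R_\mathtt{i},R_\alpha\rangle$ by a graded dimension count. Fix surjections $\rho_\mathtt{i}\colon\mathbbm{k}\tilde Q_\mathtt{i}\to\Lambda_\mathtt{i}$ with kernel $R_\mathtt{i}$, and for each arrow $\alpha\colon\mathtt{i}\to\mathtt{j}$ of $Q$ a decomposition $\tilde P_\alpha=\bigoplus_\beta\Lambda_\mathtt{j}e_{w(\beta)}\otimes_\mathbbm{k}e_{v(\beta)}\Lambda_\mathtt{i}$ into indecomposable projective bimodules, where $\beta$ ranges over the arrows of $\tilde Q$ associated to $\alpha$; put $x_\beta:=\pi_\alpha(e_{w(\beta)}\otimes e_{v(\beta)})\in e_{w(\beta)}\Lambda_\alpha e_{v(\beta)}$. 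Since $\{e_v\mid v\in\tilde Q_0\}$ is a complete set of orthogonal idempotents of $T(\Lambda)$ (orthogonality across distinct vertices of $Q$ because $\varepsilon_\mathtt{i}\varepsilon_\mathtt{j}=0$) and each arrow of $\tilde Q$ is sent to an element of $e_wT(\Lambda)e_v$, the universal property of the path algebra yields a unique algebra homomorphism $\phi$ with $\phi|_{\mathbbm{k}\tilde Q_\mathtt{i}}=\rho_\mathtt{i}$ and $\phi(\beta)=x_\beta$. It is surjective: using $T(\Lambda)=\prod_\mathtt{i}\Lambda_\mathtt{i}\oplus\bigoplus_{p\in Q_p^+}\Lambda_p$, the $\rho_\mathtt{i}$ hit the degree-$0$ part; each $\Lambda_\alpha$ with $\alpha\in Q_1$ lies in the image since $\{x_\beta\}$ generates $\tilde P_\alpha$ as a bimodule and $\pi_\alpha$ is onto, so $\Lambda_\alpha=\sum_\beta\Lambda_\mathtt{j}x_\beta\Lambda_\mathtt{i}$ (note multiplying a degree-$0$ element by $x_\beta\in\Lambda_\alpha$ in $T(\Lambda)$ is the bimodule action); and then every $\Lambda_p$ lies in the image by multiplying such elements.

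Next I would check $\langle R_\mathtt{i},R_\alpha\rangle\subseteq\ker\phi$. Trivially $R_\mathtt{i}\subseteq\ker\rho_\mathtt{i}\subseteq\ker\phi$. Given $r=\sum_\beta\sum_l\lambda_{\beta,l}\otimes\mu_{\beta,l}\in R_\alpha\subseteq\tilde P_\alpha$, pick path-level preimages $\rho_\mathtt{j}(p_{\beta,l})=\lambda_{\beta,l}$, $\rho_\mathtt{i}(q_{\beta,l})=\mu_{\beta,l}$ and form $\bar r:=\sum_\beta\sum_lp_{\beta,l}\,\beta\,q_{\beta,l}\in\mathbbm{k}\tilde Q$; then $\phi(\bar r)=\sum\lambda_{\beta,l}x_\beta\mu_{\beta,l}=\sum_\beta\pi_\alpha\bigl(\sum_l\lambda_{\beta,l}\otimes\mu_{\beta,l}\bigr)=\pi_\alpha(r)=0$, since $\pi_\alpha$ is a bimodule map and $\lambda\otimes\mu=\lambda(e_{w(\beta)}\otimes e_{v(\beta)})\mu$. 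As $\bar r$ is well defined modulo $\langle R_\mathtt{i}\rangle$, the ideal $R:=\langle R_\mathtt{i},R_\alpha\rangle$ is unambiguous, and $\phi$ factors through a surjection $\bar\phi\colon\mathbbm{k}\tilde Q/R\to T(\Lambda)$. To prove $\bar\phi$ bijective I would grade $\mathbbm{k}\tilde Q$ by the length of the induced path in $Q$ (arrows of the $\tilde Q_\mathtt{i}$ in degree $0$, the new arrows in degree $1$) and $T(\Lambda)$ by the tensor grading; then $\bar\phi$ is graded, $R_\mathtt{i}$ and $R_\alpha$ are homogeneous of degrees $0$ and $1$, and it suffices to compare dimensions in each degree. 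In degree $0$, $\mathbbm{k}\tilde Q/R$ equals $\prod_\mathtt{i}\mathbbm{k}\tilde Q_\mathtt{i}/R_\mathtt{i}=\prod_\mathtt{i}\Lambda_\mathtt{i}$, the degree-$0$ part of $T(\Lambda)$.

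In degree $r\geq1$, reducing the intra-vertex segments of a length-$r$ path modulo the $R_\mathtt{i}$ identifies the degree-$r$ part of $\mathbbm{k}\tilde Q/\langle R_\mathtt{i}\rangle$ with $\bigoplus_{p=\alpha_1\cdots\alpha_r}\tilde P_{\alpha_1}\otimes_{\Lambda_{s(\alpha_1)}}\cdots\otimes_{\Lambda_{s(\alpha_{r-1})}}\tilde P_{\alpha_r}$ — equivalently $\mathbbm{k}\tilde Q/\langle R_\mathtt{i}\rangle\cong T(\tilde\Lambda)$ for the pro-species $\tilde\Lambda$ with $\tilde\Lambda_\mathtt{i}=\Lambda_\mathtt{i}$ and $\tilde\Lambda_\alpha=\tilde P_\alpha$, generalising \cite[Theorem 3.3]{INL08} from free to projective bimodules — and inside this the degree-$r$ part of $\langle R_\alpha\rangle$ is $\sum_{p,k}\tilde P_{\alpha_1}\otimes\cdots\otimes R_{\alpha_k}\otimes\cdots\otimes\tilde P_{\alpha_r}$. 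Hence, by the (flatness-free) identity $(\bigotimes_mM_m)/\sum_m(\cdots\otimes K_m\otimes\cdots)\cong\bigotimes_m(M_m/K_m)$, the degree-$r$ part of $\mathbbm{k}\tilde Q/R$ is $\bigoplus_p(\tilde P_{\alpha_1}/R_{\alpha_1})\otimes\cdots\otimes(\tilde P_{\alpha_r}/R_{\alpha_r})=\bigoplus_p\Lambda_{\alpha_1}\otimes_\Lambda\cdots\otimes_\Lambda\Lambda_{\alpha_r}$, which is precisely the degree-$r$ part of $T(\Lambda)$. As $Q$ is acyclic there are finitely many degrees, each finite-dimensional, so the graded surjection $\bar\phi$ between spaces of equal dimension is an isomorphism. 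Finally $R$ is admissible: it lies in the square of the arrow ideal of $\tilde Q$ because the $R_\mathtt{i}$ are admissible and $\pi_\alpha$ being a projective cover forces $R_\alpha\subseteq\radoperator\Lambda_\mathtt{j}\cdot\tilde P_\alpha+\tilde P_\alpha\cdot\radoperator\Lambda_\mathtt{i}$, whose lifts involve at least two arrows; and it contains a power of the arrow ideal since $\mathbbm{k}\tilde Q/R\cong T(\Lambda)$ is finite-dimensional. So $\tilde Q$ is indeed the Gabriel quiver of $T(\Lambda)$ — this can also be read off from $\radoperator T(\Lambda)/\radoperator^2T(\Lambda)\cong\bigoplus_\mathtt{i}\radoperator\Lambda_\mathtt{i}/\radoperator^2\Lambda_\mathtt{i}\ \oplus\ \bigoplus_{\alpha\in Q_1}\Lambda_\alpha/(\radoperator\Lambda_{t(\alpha)}\Lambda_\alpha+\Lambda_\alpha\radoperator\Lambda_{s(\alpha)})$, the second summand being $\bigoplus_\alpha\Kopf\tilde P_\alpha$.

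\textbf{Expected difficulty.} The dimension comparison itself is routine; the main obstacle is the bookkeeping in the degree-$r$ identifications: matching the path basis of $\mathbbm{k}\tilde Q/\langle R_\mathtt{i}\rangle$ in degree $r$ with the iterated tensor product $\tilde P_{\alpha_1}\otimes_\Lambda\cdots\otimes_\Lambda\tilde P_{\alpha_r}$ while keeping track of which tensor factors are over $\mathbbm{k}$ and which over the $\Lambda_\mathtt{i}$ sitting at consecutive vertices, and verifying that the two-sided ideal generated by the lifts $\bar r$ of $R_\alpha$ meets each graded piece in exactly $\sum_{p,k}\tilde P_{\alpha_1}\otimes\cdots\otimes R_{\alpha_k}\otimes\cdots\otimes\tilde P_{\alpha_r}$.
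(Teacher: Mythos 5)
Your argument is correct in substance, but it takes a genuinely different route from the paper. The paper first identifies $\rad{T(\Lambda)}$ as the ideal spanned by the arrows of the $\tilde Q_\mathtt{i}$ and by the $\Lambda_\alpha$ (nilpotent because each $R_\mathtt{i}$ is admissible and $Q$ is acyclic, with semisimple quotient $\prod_\mathtt{i}\Lambda_\mathtt{i}/\rad{\Lambda_\mathtt{i}}$), then reads the quiver off $\rad{T(\Lambda)}/\radsquare{T(\Lambda)}$, observing that $\Kopf(\Lambda_\alpha)$ records exactly the summands $\Lambda_{t(\alpha)}e_j\otimes_\mathbbm{k}e_i\Lambda_{s(\alpha)}$ of $\tilde P_\alpha$; the presentation $T(\Lambda)\cong\mathbbm{k}\tilde Q/R$ is then declared clear and only admissibility is checked. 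You instead build the surjection $\phi$ explicitly and prove the presentation degree by degree, identifying $\mathbbm{k}\tilde Q/\langle R_\mathtt{i}\rangle$ with $T(\tilde\Lambda)$ for $\tilde\Lambda_\alpha=\tilde P_\alpha$ and then cutting down by the degree-one sub-bimodules $R_\alpha$ using right-exactness of the tensor product. This is more work, but it substantiates precisely the step the paper leaves implicit, and your discussion of the lifts $\bar r$ (well defined modulo $\langle R_\mathtt{i}\rangle$) makes the ideal $\langle R_\mathtt{i},R_\alpha\rangle$ unambiguous, a point the paper also glosses over. The inclusion $R\subseteq J^2$ (with $J$ the arrow ideal of $\tilde Q$) is argued exactly as in the paper, via $R_\alpha\subseteq\rad{\tilde P_\alpha}$.

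One justification, however, is not valid as stated: that $R$ contains a power of $J$ \emph{because} $\mathbbm{k}\tilde Q/R\cong T(\Lambda)$ is finite dimensional. Finite dimensionality of the quotient together with $R\subseteq J^2$ does not imply $J^m\subseteq R$: for the quiver with one loop $x$ and $R=(x^2-x^3)$ the quotient is three-dimensional and $R\subseteq J^2$, yet $x^m\equiv x^2\neq 0$ in the quotient for every $m\geq 2$, so no power of $J$ lies in $R$. What is actually needed is that the image of $J$ in $T(\Lambda)$ is nilpotent (equivalently, that it equals $\rad{T(\Lambda)}$), and this is exactly the paper's first step: the image is $\bigoplus_\mathtt{i}\rad{\Lambda_\mathtt{i}}\oplus\bigoplus_{p\in Q_p^+}\Lambda_p$, which is nilpotent since each $\rad{\Lambda_\mathtt{i}}$ is nilpotent (admissibility of $R_\mathtt{i}$) and $Q$ is acyclic. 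Your closing remark computing $\rad{T(\Lambda)}/\radsquare{T(\Lambda)}$ presupposes this identification of the radical, so the repair is a single sentence, but it must be supplied before you conclude admissibility rather than inferred from finite dimensionality alone.
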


\begin{proof}
Firstly, we have to determine the Jacobson radical of $T(\Lambda)$. It is easy to see that the ideal $J$ spanned by the $\tilde{Q}_{\mathtt{i},1}$, the arrows of the quiver $Q_\mathtt{i}$ of $\Lambda_\mathtt{i}$, and the $\Lambda_\alpha$ is nilpotent as the quiver $Q$ is acyclic. Noting that $T(\Lambda)/J\cong \prod \Lambda_\mathtt{i}/J_\mathtt{i}$, where $J_\mathtt{i}$ is the Jacobson radical of $\Lambda_\mathtt{i}$ the first claim follows.

Secondly, to determine $J/J^2$, note that the arrows in $\tilde{Q}_\mathtt{i}$ correspond to a basis of $J_\mathtt{i}/J_\mathtt{i}^2$. Furthermore, for the $\Lambda_\alpha$ one notes that elements of $\Kopf(\Lambda_\alpha)$ do not belong to $J^2$. As the elements of $\Kopf(\Lambda_\alpha)$ correspond to the direct summands of the form $\Lambda_{t(\alpha)}e_j\otimes e_i\Lambda_{s(\alpha)}$, the description of the quiver follows.

Thirdly, it is clear that $T(\Lambda)\cong \mathbbm{k}\tilde{Q}/R$. What is left to prove is that $R$ is admissible. It is obvious that $(\mathbbm{k}\tilde{Q}^+)^m\subseteq R$. For the inclusion $R\subseteq (\mathbbm{k}\tilde{Q}^+)^2$ note that this follows from the corresponding fact for the $R_\mathtt{i}$ and the fact that we chose a projective cover of $\Lambda_\alpha$ and hence $R_\alpha\subseteq \rad{\tilde{P}_\alpha}$ whose summands are of the form $\rad{\Lambda_{t(\alpha)}}e_j\otimes_\mathbbm{k} e_i \Lambda_{s(\alpha)}+\Lambda_{t(\alpha)}e_j\otimes_\mathbbm{k} e_i\rad{\Lambda_{s(\alpha)}}$. Since $e_j\otimes_\mathbbm{k} e_i\in \rad{T(\Lambda)}$, the claim follows.
\end{proof}

The proof does not use the fact that the $\Lambda_\alpha$ are projective from both sides. We included this assumption because we use it everywhere else in the paper. It is not known to the author whether this property was already established for triangular matrix rings.

\begin{ex}\label{glsexample}
\begin{enumerate}[(a)]
\item\label{gls:i} Let $Q=1\stackrel{\alpha}{\to} 2$ and let $\Lambda$ be given by $\Lambda_1=\Lambda_2=\mathbbm{k}(1\stackrel{\beta}{\to} 2)$ and let $\Lambda_\alpha=\Lambda_1$. Then a projective cover of $\Lambda_\alpha$ is given by $\Lambda_2e_1\otimes e_1\Lambda_1\oplus \Lambda_2e_2\otimes e_2\Lambda_1$, the kernel of $\pi_\alpha$ is generated by $\beta\otimes e_1-e_2\otimes \beta$. We get the well known fact that this triangular matrix ring is given by the commutative square.
\item\label{gls:ii} Let $Q$ be a quiver. Let $c_\mathtt{i}, f_{\mathtt{i}\mathtt{j}}, f_{\mathtt{j}\mathtt{i}}, g_{\mathtt{i}\mathtt{j}}\in \mathbb{N}$ for $\mathtt{i},\mathtt{j}\in Q_0$. Let $Q\to \Algpro$ be the pro-species of algebras given by \[\Lambda_\mathtt{i}:=\mathbbm{k}[x_\mathtt{i}]/(x_\mathtt{i}^{c_\mathtt{i}})\] for $\mathtt{i}\in Q_0$ and \[\Lambda_{\alpha}:=\bigoplus_{g_{\mathtt{i}\mathtt{j}}} \left(\mathbbm{k}[x_\mathtt{i}]/(x_\mathtt{i}^{c_\mathtt{i}})\otimes_\mathbbm{k} \mathbbm{k}[x_\mathtt{j}]/(x_\mathtt{j}^{c_\mathtt{j}})\right)/\left(x_\mathtt{i}^{f_{\mathtt{j}\mathtt{i}}}\otimes_\mathbbm{k} 1-1\otimes_\mathbbm{k} x_\mathtt{j}^{f_{\mathtt{i}\mathtt{j}}}\right)\]
for $\alpha\in Q_1$. Then $T(\Lambda)$ is isomorphic to the algebra $H(C,D,\Omega)$ as defined in \cite{GLS16} where (H1) are the relations corresponding to $R_\mathtt{i}$ and (H2) correspond to the relations given by $R_\alpha$. This explains the relations (H1) and (H2) of \cite{GLS16} which at first sight might seem unnatural.
\end{enumerate}
\end{ex}

Similarly, one obtains the following statement for the preprojective algebra. Note, that this algebra is not necessarily finite dimensional, thus we do not use the expression ``quiver with relations'' here.

\begin{prop}\label{quiverpreprojectivealgebra}
Let $\Lambda\colon Q\to \Algpro$ be a dualisable pro-species of algebras. Suppose $\Lambda_\mathtt{i}\cong \mathbbm{k}\tilde{Q}_\mathtt{i}/R_\mathtt{i}$ is given by a quiver $\tilde{Q}_\mathtt{i}$ and relations $R_\mathtt{i}$. For each $\alpha\in \overline{Q}$ let $\pi_\alpha\colon \tilde{P}_\alpha\to \Lambda_\alpha$ be a projective cover of $\Lambda_\alpha$ as a $\Lambda_{t(\alpha)}$-$\Lambda_{s(\alpha)}$-module. Denote its kernel by $R_\alpha$. Then, $\Pi(\Lambda)\cong \mathbbm{k}\tilde{Q}/R$, where 
\[\tilde{Q}=\bigcup_{\mathtt{i}\in Q_0} \tilde{Q}_\mathtt{i}\cup \{j\to i\, |\, \Lambda_{t(\alpha)} e_j\otimes_\mathbbm{k} e_i\Lambda_{s(\alpha)} \text{ is a direct summand of } \tilde{P}_\alpha \text{ for some }\alpha\}\]
and $R=\langle R_\mathtt{i}, R_\alpha, c\rangle$ where $c$ is as in Definition \ref{preprojectivealgebra}.
\end{prop}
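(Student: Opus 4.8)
The plan is to piggyback almost entirely on the proof of Proposition \ref{quivertensoralgebra}, since $\Pi(\Lambda) = T(\overline{\Lambda})/\langle c\rangle$ and we have already computed the quiver and relations of $T(\overline{\Lambda})$ in that proposition applied to the double pro-species $\overline{\Lambda}$. First I would invoke Proposition \ref{quivertensoralgebra} for $\overline{\Lambda}\colon \overline{Q}\to\Algpro$: this gives $T(\overline{\Lambda})\cong \mathbbm{k}\tilde{Q}/R'$ with $R' = \langle R_\mathtt{i}, R_\alpha\rangle$ where $\alpha$ now ranges over $\overline{Q}_1$, and $\tilde{Q}$ exactly the quiver in the statement (note the quiver description in the two propositions is literally the same once $\alpha$ ranges over arrows of $\overline{Q}$, because each $\alpha^*$ contributes arrows $j\to i$ coming from the projective cover $\tilde{P}_{\alpha^*}$ of $\Lambda_{\alpha^*}$). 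One subtlety to address: Proposition \ref{quivertensoralgebra} was stated for acyclic $Q$ to ensure $J$ is nilpotent; $\overline{Q}$ is never acyclic, so $T(\overline{\Lambda})$ is typically infinite-dimensional. I would remark that the quiver-and-relations description $T(\overline{\Lambda})\cong\mathbbm{k}\tilde{Q}/R'$ still holds without the finiteness, since the only place acyclicity was used was to conclude admissibility of $R'$; here we simply drop the admissibility claim (which is why the statement of Proposition \ref{quiverpreprojectivealgebra} avoids the phrase ``quiver with relations'').

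Next I would take the quotient by $\langle c\rangle$. Since $\Pi(\Lambda) = T(\overline{\Lambda})/\langle c\rangle$ and $T(\overline{\Lambda}) \cong \mathbbm{k}\tilde{Q}/R'$, we immediately get $\Pi(\Lambda) \cong \mathbbm{k}\tilde{Q}/\langle R', c\rangle = \mathbbm{k}\tilde{Q}/\langle R_\mathtt{i}, R_\alpha, c\rangle$, where $c$ is viewed as an element of $\mathbbm{k}\tilde{Q}$ via the isomorphism. The only genuine content is to check that $c$, which a priori lives in $T(\overline{\Lambda})$, really lies in the degree $\geq 2$ part, i.e. that it is a genuine relation living in $(\mathbbm{k}\tilde{Q}^+)^2$ and not something that alters the quiver. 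This is clear from the definition of the Casimir element $c_\alpha = \sum_i x_i\otimes f_i \in \Lambda_\alpha\otimes\Lambda_{\alpha^*}\subseteq T(\overline{\Lambda})$, which is a sum of products of two arrow-layer elements and hence has tensor degree $2$, so it maps into $(\mathbbm{k}\tilde{Q}^+)^2$ after expressing the $x_i$ and $f_i$ in terms of the chosen basis arrows of $\tilde{Q}$; I would spell this out briefly.

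The main obstacle — really the only one worth more than a sentence — is the bookkeeping around the infinite-dimensionality and the precise identification of $c$ inside $\mathbbm{k}\tilde{Q}$. Concretely one must be slightly careful that ``$\langle c\rangle$'' as a two-sided ideal of $T(\overline{\Lambda})$ corresponds under the isomorphism $T(\overline{\Lambda})\cong\mathbbm{k}\tilde{Q}/R'$ to the image of the two-sided ideal generated by (a lift of) $c$ in $\mathbbm{k}\tilde{Q}$, so that $\langle R', c\rangle$ is unambiguous; this is a formal property of quotients of quotients and I would just note it. I would close by observing, as in the remark after Proposition \ref{quivertensoralgebra}, that the hypothesis that the $\Lambda_\alpha$ are projective from both sides is not actually needed for the quiver-and-relations statement itself, only dualisability is needed to make sense of $\Lambda_{\alpha^*}$ and hence of $\Pi(\Lambda)$, and conclude the proof.
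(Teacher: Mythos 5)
Your argument is essentially the paper's: the paper gives no explicit proof of this proposition, merely prefacing it with ``Similarly, one obtains\dots'' and a remark that $\Pi(\Lambda)$ need not be finite dimensional, and your fleshing-out (apply Proposition \ref{quivertensoralgebra} to $\overline{\Lambda}$, drop the admissibility claim since $\overline{Q}$ has cycles, then quotient by $\langle c\rangle$ after checking $c$ sits in tensor degree $2$) is exactly the intended route. One small correction to your closing aside: for the preprojective algebra the projectivity of the $\Lambda_\alpha$ from both sides is \emph{not} dispensable, since the Casimir elements $c_\alpha$ are defined via the dual basis lemma (Lemma \ref{dualbasislemma}), which requires $\Lambda_\alpha$ to be finitely generated projective; dualisability alone does not suffice to make sense of $c$ and hence of $\Pi(\Lambda)$.
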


\begin{ex}
Let $Q$ be a quiver. Let $\Lambda\colon Q\to \Algpro$ be defined as in Example \ref{glsexample} \eqref{gls:ii}, then $\Pi(\Lambda)$ is isomorphic to the algebra $\Pi(C,D,\Omega)$ from \cite{GLS16} where (P1) corresponds to the $R_\mathtt{i}$, (P2) corresponds to the $R_\alpha$ and (P3) corresponds to the relations in $c$.
\end{ex}

\bibliographystyle{alpha}
\bibliography{publication}

\begin{thebibliography}{ICNLP08}

\bibitem[AR73]{AR73}
Maurice Auslander and Idun Reiten.
\newblock Stable equivalences of {A}rtin algebras.
\newblock In {\em Proceedings of the {C}onference on {O}rders, {G}roup {R}ings
  and {R}elated {T}opics ({O}hio {S}tate {U}niv., {C}olumbus, {O}hio, 1972},
  volume 353 of {\em Lecture Notes in Mathematics}, pages 8--71, 1973.

\bibitem[AR75]{AR75}
Maurice Auslander and Idun Reiten.
\newblock On a generalized version of the {N}akayama conjecture.
\newblock {\em Proceedings of the American Mathematical Society}, 52:69--74,
  1975.

\bibitem[AR91]{AR91}
Maurice Auslander and Idun Reiten.
\newblock Applications to contravariantly finite subcategories.
\newblock {\em Advances in Mathematics}, 86(1):111--152, 1991.

\bibitem[ARS95]{ARS95}
Maurice Auslander, Idun Reiten, and Sverre~Olaf Smal\o{}.
\newblock {\em Representation Theory of {A}rtin Algebras}.
\newblock Cambridge University Press, 1995.

\bibitem[ASS06]{ASS06}
Ibrahim Assem, Daniel Simson, and Andrzej Skowro\'nski.
\newblock {\em Elements of the Representation Theory of Associative Algebras,
  Volume I: Techniques of Representation Theory}.
\newblock Cambridge University Press, 2006.

\bibitem[BBK02]{BBK02}
Sheila Brenner, Michael C.~R. Butler, and Alastair~D. King.
\newblock Periodic {algebras} which are {almost} {Koszul}.
\newblock {\em Algebras and Representation Theory}, 5(4):331--368, October
  2002.

\bibitem[BGP73]{BGP73}
Iosif~Naumovi{\v{c}} Bern{\v{s}}te{\u\i}n, Israel~Moiseevich Gel'fand, and
  V.~A. Ponomarev.
\newblock Coxeter functors, and {G}abriel's theorem.
\newblock {\em Akademiya Nauk SSSR i Moskovskoe matematicheskoe Obshchestvo.
  Uspekhi Matematicheskikh Nauk}, 28(2(170)):19--33, 1973.

\bibitem[BIRS09]{BIRS09}
Aslak~Bakke Buan, Osamu Iyama, Idun Reiten, and Jeanne Scott.
\newblock Cluster structures for 2-{C}alabi-{Y}au categories and unipotent
  groups.
\newblock {\em Compositio Mathematica}, 145(4):1035--1079, 2009.

\bibitem[BK12]{BK12}
Pierre Baumann and Joel Kamnitzer.
\newblock Preprojective algebras and {MV} polytopes.
\newblock {\em Representation Theory}, 16:152--188, 2012.

\bibitem[BKT14]{BKT14}
Pierre Baumann, Joel Kamnitzer, and Peter Tingley.
\newblock Affine {M}irkovi\'c-{V}ilonen polytopes.
\newblock {\em Publications Math\'ematiques. Institut de Hautes \'Etudes
  Scientifiques}, 120:113--205, 2014.

\bibitem[Bol10]{Bol10}
Benjamin Bolten.
\newblock Spiegelungsfunktoren f{\"u}r pr{\"a}projektive {A}lgebren.
\newblock Diploma thesis, University of Bonn, 2010.
\newblock 66 pp.

\bibitem[Che10]{Che10}
Xiao-Wu Chen.
\newblock {G}orenstein homological algebra of {A}rtin algebras.
\newblock March 2010.

\bibitem[DR74a]{DR74a}
Vlastimil Dlab and Claus~Michael Ringel.
\newblock Repr\'esentations des graphes valu\'es.
\newblock {\em Comptes Rendus de l'Acad\'emie des Sciences Paris, Series A},
  278:537--540, 1974.

\bibitem[DR74b]{DR74b}
Vlastimil Dlab and Claus~Michael Ringel.
\newblock {\em Representations of graphs and algebras}.
\newblock Number~8 in Carleton Mathematical Lecture Notes. Departement of
  Mathematics, Carleton University, Ottawa, Ontario, 1974.
\newblock pp. iii+86.

\bibitem[DR75]{DR75}
Vlastimil Dlab and Claus~Michael Ringel.
\newblock On algebras of finite representation type.
\newblock {\em Journal of algebra}, 33:306--394, 1975.

\bibitem[DR76]{DR76}
Vlastimil Dlab and Claus~Michael Ringel.
\newblock Indecomposable representations of graphs and algebras.
\newblock {\em Memoirs of the American Mathematical Society}, 6:v+57, 1976.

\bibitem[DR80]{DR80}
Vastimil Dlab and Claus~Michael Ringel.
\newblock The preprojective algebra of a modulated graph.
\newblock In {\em Representation theory, {II} ({P}roceedings of the {S}econd
  {I}nternational {C}onference on {R}epresentations of {A}lgebras}, volume 832
  of {\em Lecture Notes in Mathematics}, pages 216--231. Springer, Berlin-New
  York, 1980.

\bibitem[Fai81]{Fai81}
Carl Faith.
\newblock {\em Algebra. {I}. {R}ings, modules, and categories}, volume 190 of
  {\em Grundlehren der Mathematischen Wissenschaften}.
\newblock Springer-Verlag, Berlin-New York, 1981.

\bibitem[Gab73]{Gab73}
Peter Gabriel.
\newblock Indecomposable representations. {II}.
\newblock In {\em Symposia {M}athematica, {V}ol. {XI} ({C}onvegno di {A}lgebra
  {C}ommutativa, {INDAM}, {R}ome, 1971}, pages 81--104. Academic Press, London,
  1973.

\bibitem[GLS07]{GLS07}
Christof Geiss, Bernard Leclerc, and Jan Schr{\"o}er.
\newblock Semicanonical bases and preprojective algebras. {II}. {A}
  multiplication formula.
\newblock {\em Compositio Mathematica}, 143(5):1313--1334, 2007.

\bibitem[GLS16]{GLS16}
Christof Geiss, Bernard Leclerc, and Jan Schr{\"o}er.
\newblock Quiver with relations for symmetrizable {C}artan matrices {I}:
  {F}oundations.
\newblock preprint, March 2016.

\bibitem[GP79]{GP79}
Israel~Moiseevich Gel'fand and V.A. Ponomarev.
\newblock Model algebras and representations of graphs.
\newblock {\em Akademiya Nauk SSSR. Funktional' ny\u\i\ Analiz i ego
  Prilozheniya}, 13(3):1--12, 1979.

\bibitem[GP14]{GP14}
Edward~Lewis Green and Chrysostomos Psaroudakis.
\newblock On {A}rtin algebras arising from {M}orita contexts.
\newblock {\em Algebras and Representation Theory}, 17(5):1485--1525, 2014.

\bibitem[ICNLP08]{INL08}
Rosa~M. Ib{\'a}{\~n}ez~Cobos, Gabriel Navarro, and Javier L{\'o}pez~Pe{\~n}a.
\newblock A note on generalized path algebras.
\newblock {\em Revue Roumaine de Math{\'e}matiques Pures et Appliqu\'ees.
  Romanian Journal of Pure and Applied Mathematics}, 53(1):25--36, 2008.

\bibitem[Lam99]{Lam99}
Tsit-Yuen Lam.
\newblock {\em Lectures on modules and rings}, volume 189 of {\em Graduate
  Texts in Mathematics}.
\newblock Springer-Verlag, New York, 1999.

\bibitem[Li12]{Li12}
Fang Li.
\newblock Modulation and natural valued quiver of an algebra.
\newblock {\em Pacific Journal of Mathematics}, 256(1):105--128, 2012.

\bibitem[LY15]{LY15}
Fang Li and Chang Ye.
\newblock Representations of {F}robenius-type triangular matrix algebras.
\newblock preprint, December 2015.

\bibitem[LZ13]{LZ13}
Xiu-Hua Luo and Pu~Zhang.
\newblock Monic representations and {G}orenstein-projective modules.
\newblock {\em Pacific Journal of Mathematics}, 264(1):163--194, 2013.

\bibitem[Rin98]{Ri98}
Claus~Michael Ringel.
\newblock The preprojective algebra of a quiver.
\newblock In {\em Algebras and modules, {II} ({G}eiranger, 1996)}, volume~24 of
  {\em CMS Conference Proceedings}, pages 467--480. American Mathematical
  Society, Providence, RI, 1998.

\bibitem[RS08]{RS08}
Claus~Michael Ringel and Markus Schmidmeier.
\newblock The {A}uslander-{R}eiten translation in submodule categories.
\newblock {\em Transaction of the American Mathematical Society},
  360(2):691--716, 2008.

\bibitem[Sch85]{Sch85}
Aidan~H. Schofield.
\newblock {\em Representation of rings over skew fields}, volume~92 of {\em
  London Mathematical Society Lecture Note Series}.
\newblock Cambridge University Press, Cambridge, 1985.
\newblock xii+223.

\bibitem[Wan16]{Wan16}
Ren Wang.
\newblock Gorenstein triangular matrix rings and category algebras.
\newblock {\em Journal of Pure and Applied Algebra}, 220(2):666--682, 2016.

\bibitem[Wei94]{Wei94}
Charles~A. Weibel.
\newblock {\em An introduction to homological algebra}, volume~38 of {\em
  Cambridge Studies in Advanced Mathematics}.
\newblock Cambridge University Press, Cambridge, 1994.

\bibitem[XZ12]{XZ12}
Bao-Lin Xiong and Pu~Zhang.
\newblock Gorenstein-projective modules over triangular matrix {A}rtin
  algebras.
\newblock {\em Journal of Algebra and its Applications}, 11(4):1250066, 14,
  2012.

\end{thebibliography}

\end{document}